\newtheorem{theorem}{Theorem}
\numberwithin{equation}{section}
\numberwithin{theorem}{section}
\newtheorem{lemma}[theorem]{Lemma}
\newtheorem{corr}[theorem]{Corollary}
\newtheorem{conjecture}[theorem]{Conjecture}
\newtheorem{proposition}[theorem]{Proposition}
\newtheorem{deff}[theorem]{Definition}
\newtheorem{remark}[theorem]{Remark}
\newcommand{\bth}{\begin{theorem}}
	\newcommand{\ble}{\begin{lemma}}
		\newcommand{\bcor}{\begin{corr}}
			\newcommand{\bdeff}{\begin{deff}}
				\newcommand{\bprop}{\begin{proposition}}
					\newcommand{\ele}{\end{lemma}}
				\newcommand{\ecor}{\end{corr}}
			\newcommand{\edeff}{\end{deff}}
		\newcommand{\eprop}{\end{proposition}}
	\newcommand{\la}{\lambda}
\newcommand{\R}{\mathbb R}
\newcommand{\C}{\mathbb C}
\newcommand{\Z}{\mathbb Z}
\newcommand{\bfk}{\mathbf k}
\newcommand{\lap}[1]{\sqrt{-\Delta_{#1}}}
\newcommand{\I}{\mathcal I}
\newcommand{\tgamma}{\tilde \gamma}
	\newcommand{\eps}{\varepsilon}
		\newcommand{\inj}{\text{inj}}
	\newcommand{\supp}{\text{supp }}
	\renewcommand{\Pi}{\varPi}
	\newcommand{\Rt}{{\Bbb R}^2}
	\newcommand{\tg}{{\tilde g}}
	\newcommand{\dgt}{d_{\tilde g}}
	\newcommand{\sqrtg}{\sqrt{-\Delta_g}}
	\newcommand{\sqrtd}{\sqrt{-\Delta_{\tilde g}}}
	\newcommand{\Pe}{\sqrt{-\Delta}}
	\thanks{ }
\begin{document}

		\title[Improved Generalized Periods Over Curves]
		{Improved Generalized Periods Estimates over Curves on Riemannian Surfaces with Nonpositive Curvature}

		\begin{abstract}  
				We show that on compact Riemann surfaces of nonpositive curvature, the generalized periods, i.e. the $\nu$-th order Fourier coefficients of eigenfunctions $e_\lambda$ over a closed smooth curve $\gamma$ which satisfies a natural curvature condition,  go to 0 at the rate of $O((\log\lambda)^{-1/2})$, if $0<|\nu|/\lambda<1-\delta$, for any fixed $0<\delta<1$.  Our result implies, for instance, the generalized periods over geodesic circles on any surfaces with nonpositive curvature would converge to zero at the rate of $O((\log\lambda)^{-1/2})$. A direct corollary of our results and the QER theorem of Toth and Zelditch \cite{TZ} is that for a geodesic circle $\gamma$ on a compact hyperbolic surface, the restriction $e_{\lambda_j}|_\gamma$ of an orthonormal basis $\{e_{\lambda_j}\}$ has a full density subsequence that goes to zero in weak-$L^2(\gamma)$. One key step of our proof is a microlocal decomposition of the measure over $\gamma$ into tangential and transversal parts. 
		\end{abstract}
		\keywords{Eigenfunction Estimates, Generalized Periods, Negative Curvature}
					\author{Emmett L. Wyman}
		\address{Department of Mathematics, Johns Hopkins University, Baltimore, MD 21210}
		\email{ewyman3@math.jhu.edu}
		\author{Yakun Xi}
		\address{Department of Mathematics, University of Rochester, Rochester, NY 14627}
		\email{yxi4@math.rochester.edu}

		\maketitle
		
		
	\section{Introduction}

Let $e_\la$ denote the $L^2$-normalized eigenfunction on a compact, boundary-less Riemannian surface $(M,g)$, i.e., 
$$-\Delta_g e_\la=\la^2 e_\la, \quad \text{and } \, \, \int_M |e_\la|^2 \, dV_g=1.$$
Here $\Delta_g$ is the Laplace-Beltrami operator on $(M,g)$ and $dV_g$  is the volume element associated with metric $g$.

Various questions concerning analytic properties
of eigenfunctions have drawn much attention from number theorists, analysts and physicists in recent years. In particular, it is an area of interest to study  quantitative behaviors of eigenfunctions restricted to smooth curves. Recently, there are a lot of interests in studying the period integral of eigenfunctions over smooth closed curves on compact hyperbolic surfaces due to its significance in number theory. See e.g. \cite{Zel}, \cite{Pitt}, \cite{Rez} and the references therein.

Using the Kuznecov formula, Good \cite{Good} and Hejhal \cite{Hej}  proved independently that if $\gamma_{per}$ is a periodic geodesic on a compact hyperbolic
surface $M$ parametrized by arc-length, then, uniformly in $\la$,
\begin{equation}\label{i.1}
\Bigl|\, \int_{\gamma_{per}} e_\la \, ds\, \Bigr| \le C_{\gamma_{per}}.
\end{equation}
A few years later, Zelditch~\cite{ZelK} generalized this result by showing that if $\la_j$ are the eigenvalues of $\sqrt{-\Delta_g}$ for an orthonormal basis of eigenfunctions $e_{\la_j}$ on
a compact Riemannian surface, and if $p_j(\gamma_{per})$ denote the period integrals  of $e_{\la_j}$ as in \eqref{i.1},
then
\begin{equation}\label{Zel}\sum_{\la_j\le \la}|p_j(\gamma_{per})|^2 =c_{\gamma_{per}} \la +O(1),\end{equation}
where the remainder being $O(1)$ implies \eqref{i.1}.  Further work for hyperbolic surfaces giving more information about the lower order remainder in terms
of geometric data of $\gamma_{per}$ was done by Pitt~\cite{Pitt}.  By Weyl's Law, the number of eigenvalues (counting multiplicities) that are smaller than
$\la$ is about $\la^2$, and thus \eqref{Zel} implies that, on average, one can do much better than \eqref{i.1}. It was pointed out by Chen and Sogge \cite{CSPer} that \eqref{i.1} is sharp on both the sphere $S^2$ and the flat torus $\mathbb T^2$.  On $S^2$,  \eqref{i.1} is saturated by  $L^2$-normalized zonal spherical harmonics of even degree restricted to the equator, while it is trivially sharp on the flat torus ${\mathbb T}^2$.  In contrast, as an analogy with the Lindel\"of conjecture for certain $L$-functions, it is conjectured by Reznikov \cite{Rez} that the period integrals over closed geodesics/ geodesic circles on a compact hyperbolic surface satisfy the following:
\begin{conjecture}[\cite{Rez}]Let $\gamma$ be a periodic geodesic or a geodesic circle on a compact hyperbolic surface $(M,g)$. Then given $\eps>0$, there exists a constant $C_\eps$ depending on $\eps$, $M$ and the length of $\gamma_{per}$, such that 
	\begin{equation}	\Bigl|\, \int_{\gamma}\, e_\la \, ds\, \Bigr| \le C_\eps\la^{-\frac12+\eps}.
	\end{equation}		
	
\end{conjecture}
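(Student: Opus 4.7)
The stated conjecture is an open problem widely regarded as the geometric Lindel\"of analogue for periods, so any plan is necessarily speculative; I would attempt it by combining two distinct attacks and would fully expect one of them (or both) to fall short of the full statement.

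The first approach is arithmetic: on a compact arithmetic hyperbolic surface one can often realize the period integral $\int_\gamma e_\lambda\,ds$ as a special value (or a finite sum of such) of a Rankin--Selberg or Waldspurger-type $L$-function in the spectral aspect. I would start by making this correspondence explicit in the two cases at hand --- closed geodesics and geodesic circles --- following the Bernstein--Reznikov framework that already lies behind the conjecture. Once the period is written in terms of $L(1/2, e_\lambda \otimes \theta)$ for an appropriate theta lift or automorphic form attached to $\gamma$, the desired $\lambda^{-1/2+\varepsilon}$ bound would follow from subconvexity estimates in the spectral aspect of the type established by Michel--Venkatesh, Blomer--Harcos, and Petrow--Young. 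The plan here is essentially to import the analytic number theory directly; my contribution would be ensuring that the geometric normalizations match and that uniformity in $\lambda$ is retained.

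The second, purely analytic, approach is to sharpen the microlocal tangential/transversal decomposition highlighted in the abstract. The $(\log \lambda)^{-1/2}$ savings that the paper establishes come from propagating for time of order $\log \lambda$ (the Ehrenfest time) along geodesics and exploiting nonpositive curvature through the Hadamard parametrix on the universal cover. To reach a polynomial gain one would need to go substantially past Ehrenfest; here I would try to adapt the fractal uncertainty principle of Bourgain--Dyatlov, and the subsequent work of Dyatlov--Jin--Nonnenmacher which breaks the Ehrenfest barrier for control problems on hyperbolic surfaces, and then convert that spectral gain into a period estimate by a $TT^*$ argument against the pushforward measure on $\gamma$.

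The main obstacle on the analytic side is precisely the Ehrenfest barrier: the Hadamard-parametrix machinery that produces the log-gain emits error terms that blow up polynomially once $t \gg \log \lambda$, and nothing in the current restriction/period toolkit is known to survive that passage. On the arithmetic side, the obstacle is that the conjecture is posed for \emph{all} compact hyperbolic surfaces, not just arithmetic ones, so the $L$-function route cannot resolve it in full. A realistic intermediate goal --- and what I would actually try to prove first --- is the arithmetic case, leaving the general nonpositively curved setting as the essentially harder part.
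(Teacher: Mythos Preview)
This statement is a \emph{conjecture} that the paper quotes from Reznikov; the paper does not prove it, and indeed no proof of it is known. There is therefore no ``paper's own proof'' to compare against. Your proposal is appropriate precisely because you recognize this: you explicitly call it an open problem, outline two speculative lines of attack (arithmetic via $L$-functions and subconvexity; analytic via pushing past Ehrenfest time using the fractal uncertainty principle), and honestly flag the obstacles on each side.

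A few brief remarks on the substance. The arithmetic route you describe is the one most experts would regard as the only currently viable path toward the full exponent, and your identification of the Bernstein--Reznikov and Michel--Venkatesh frameworks is accurate; but as you note, it is restricted to arithmetic surfaces and so cannot settle the conjecture as stated. On the analytic side, your instinct to look at the Bourgain--Dyatlov and Dyatlov--Jin--Nonnenmacher machinery is natural, but be aware that the gains those results yield for eigenfunction concentration are still only polylogarithmic (or a fixed small power in favorable settings), far from the $\lambda^{-1/2+\varepsilon}$ required here; converting any such gain into a period bound via a $TT^*$ argument would also need new ideas, since the pushforward measure on $\gamma$ does not obviously interact with the limit set in the way FUP requires. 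In short: your assessment of the landscape is sound, and there is no gap to name because you have not claimed a proof---only a research plan whose limitations you have already identified.
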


In a paper of Chen and Sogge \cite{CSPer}, the first improvement over \eqref{i.1} for closed geodesics was obtained. Indeed, they proved a stronger statement saying that the period integrals in \eqref{i.1} converge to 0 as $\la\to \infty$, if $(M,g)$ has strictly negative
curvature.  The proof exploited the simple geometric fact that, due to the presence of negative curvature, there is no non-trivial geodesic rectangle on the universal cover of $M$.  This allowed them to show that the period integrals goes to 0 as $\la\rightarrow\infty$. In a recent paper of Sogge, the second author and Zhang \cite{Gauss}, this method was further refined, and they managed to show that 
\begin{equation}\label{gauss}
\Bigl|\, \int_{\gamma_{per}} e_\la \, ds\, \Bigr| = O((\log\lambda)^{-\frac12}),
\end{equation}
under a weaker curvature assumption, where the curvature $K=K_g$ of $(M,g)$ is  assumed to be non-positive but allowed to vanish at an averaged
rate of finite type. The key idea of \cite{Gauss} was to use the Gauss-Bonnet Theorem to get a quantitative version of the ideas used in \cite{CSPer}, that is, to quantitatively avoid geodesic rectangles on the universal cover.  

Since then, there have been plenty new developments in this area.  The first author (\cite{emmett2}, \cite{emmett1}) generalized the results in \cite{CSPer} and \cite{Gauss} to curves which have geodesic curvature bounded away from the curvature of limiting circles. See also the recent work of Canzani, Galkowski \cite{canzani}
for $o(1)$ bounds for averages over hypersurfaces under weaker 
assumptions. To describe this result, we shall need to introduce a few notations. If $\gamma$ is a smooth curve in $M$, we denote by $\kappa_\gamma(t)$ the geodesic curvature of $\gamma$ at $t$, i.e.
\[
\kappa_\gamma(t) = \frac{1}{|\gamma'(t)|} \left| \frac{D}{dt} \frac{\gamma'(t)}{|\gamma'(t)|} \right|,
\]
where $D/dt$ is the covariant derivative in the parameter $t$. Fixing a point $p \in M$ and $v \in T_pM$, we denote by $v^\perp$ a choice of vector in $T_pM$ for which $|v^\perp| = |v|$ and $\langle v^\perp, v \rangle = 0$. We also need a function $\mathbf k$ on the unit sphere bundle of $M$ representing the ``curvature of a limiting circle."
\begin{deff}[Curvature of a limiting circle] \label{def k}
	For a point $p \in M$ and $v \in S_pM$ and let $r \mapsto \zeta(r)$ the unit speed geodesic with $\zeta(0) = p$ and $\zeta'(0) = v$. Let $J$ be a Jacobi field along $\zeta$ satisfying
	\begin{equation} \label{J initial condition}
	J(0) = \zeta'(0)^\perp.
	\end{equation}
	We let $\bfk_p(v)$ denote the unique number such that
	\begin{equation} \label{J bounded}
	|J(r)| = O(1) \quad \text{ for } r \in (-\infty,0]
	\end{equation}
	if $J$ satisfies the additional initial condition
	\begin{equation} \label{J' initial condition}
	\frac{D}{dr} J(0) = \bfk_p(v) J(0).
	\end{equation}
\end{deff}

The name ``curvature of a limiting circle" will be clear after a lift to the universal cover. By the Cartan-Hadamard Theorem, we identify the universal cover of $M$ with $(\R^2, \tilde g)$, where $\tilde g$ is the pullback of the metric tensor $g$ through the covering map. If $p \in M$ and $v \in T_pM$, denote by $\tilde p$ and $\tilde v$ their respective lifts to $\tilde M$ and $T_{\tilde p}\tilde M$. Then $\mathbf k_p(v)$ denotes the limiting curvature of the circle at $\tilde p$ with center taken to infinity along the geodesic ray in direction $-\tilde v$. In the flat case, $\mathbf k_p(v) = 0$ for all $p\in M$, while if $M$ is a compact hyperbolic surface with sectional curvature $-1$, then $\bfk_p(v) = 1$ for all $p\in M$, which equals the curvature of a horocycle in the hyperbolic plane. See Figure 1.
\begin{figure}
	\centering
	\includegraphics[width=.65\textwidth]{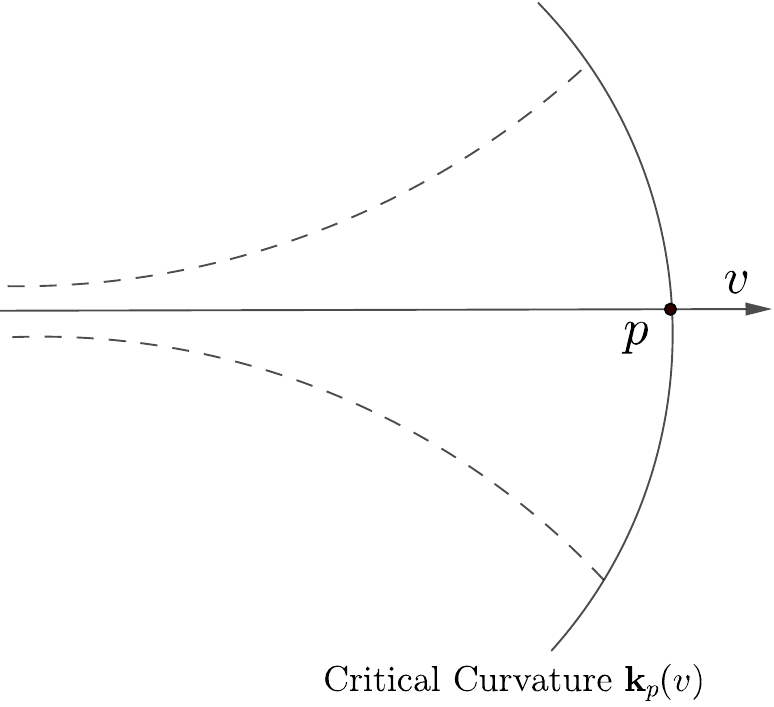}
	\caption{}
	\label{fig1}
\end{figure}		
\begin{theorem}[\cite{emmett2}] \label{W}
	Let $(M,g)$ have nonpositive sectional curvature and let $\gamma$ be a smooth closed unit-speed curve in $M$. Then we have
	\[
	\left|\int_\gamma \,e_\lambda \, ds\right| = O((\log \lambda)^{-1/2}),
	\]
	provided that
	\begin{equation} \label{curvature hypotheses}
	\kappa_\gamma(s) \neq \bfk_{\gamma(s)}(\gamma'^\perp(s)) \quad \text{ and } \quad \kappa_\gamma(s) \neq \bfk_{\gamma(s)}(-\gamma'^\perp(s))
	\end{equation}
	for all points $\gamma(s)\in\gamma$.
\end{theorem}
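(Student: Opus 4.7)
The plan is to generalize the $TT^*$--Hadamard-parametrix approach of Sogge--Xi--Zhang \cite{Gauss}, developed for periodic geodesics, to arbitrary smooth closed curves obeying the curvature hypothesis \eqref{curvature hypotheses}. First I would fix a real, even $\chi \in \mathcal{S}(\R)$ with $\chi(0) = 1$ and $\hat\chi$ supported in $[-1,1]$, and set $T = c \log \lambda$ for a small $c > 0$. Since $e_\lambda = \chi(T(\lambda - P)) e_\lambda$ with $P = \sqrtg$, Cauchy--Schwarz gives
\[
\left|\int_\gamma e_\lambda \, ds\right|^2 \leq \|\chi(T(\lambda - P))^* \delta_\gamma\|_{L^2(M)}^2 = \int_\gamma \int_\gamma K_T(x, y)\,ds(x)\,ds(y),
\]
where $K_T$ is the Schwartz kernel of $|\chi|^2(T(\lambda - P))$. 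It thus suffices to show the right-hand side is $O(T^{-1})$.

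Writing $|\chi|^2(T(\lambda-P))$ via Fourier inversion against the half-wave group and exploiting finite propagation speed, one lifts $K_T$ to the universal cover $(\tilde M, \tg) \cong (\R^2, \tg)$ provided by Cartan--Hadamard as
\[
K_T(x,y) = \sum_{\alpha \in \Gamma} \tilde K_T(\tilde x, \alpha \tilde y),
\]
where $\Gamma$ is the deck group and $\tilde K_T(\tilde x,\tilde y)$ is supported on $\dgt(\tilde x,\tilde y) \lesssim T$. Rauch comparison under the nonpositive curvature hypothesis bounds the number of contributing deck elements by $O(e^{CT}) = O(\lambda^{Cc})$, which is a controllable power of $\lambda$ when $c$ is small. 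On $\tilde M$ each $\tilde K_T$ is described by a Hadamard parametrix whose principal symbol has modulus $\lambda^{1/2}$ and phase $\pm \lambda \dgt(\tilde x, \tilde y)$.

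The key step is stationary phase on each term $\int_\gamma \int_\gamma \tilde K_T(\tilde\gamma(s), \alpha \tilde\gamma(s'))\, ds\, ds'$. Critical points of the phase $\lambda \dgt(\tilde\gamma(s), \alpha \tilde\gamma(s'))$ correspond to pairs $(s,s')$ where the minimizing geodesic between the lifted arcs meets $\tilde\gamma$ and $\alpha\tilde\gamma$ orthogonally. Computing the Hessian via the second variation of arc length, it involves $\kappa_\gamma$ at the endpoints together with the principal curvature at those endpoints of the geodesic sphere of radius $L := \dgt(\tilde\gamma(s),\alpha\tilde\gamma(s'))$ centered at the opposite endpoint. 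As $L \to \infty$ these sphere curvatures converge precisely to $\bfk_{\gamma(s)}(\pm \gamma'^\perp(s))$, which is the content of Definition \ref{def k}; hence \eqref{curvature hypotheses} yields uniform non-degeneracy of the Hessian for all $\alpha \neq \mathrm{id}$. Stationary phase contributes $O(\lambda^{-1})$ per term, and summing gives a total $O(\lambda^{-1+Cc}) = o(T^{-1})$. The identity term is analyzed similarly near the diagonal, where the same curvature gap gives non-degeneracy and yields $O(\lambda^{-1/2})$.

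To justify the stationary phase analysis rigorously, I would microlocally decompose $\delta_\gamma = \delta_\gamma^{\mathrm{tan}} + \delta_\gamma^{\perp}$ into components tangential and transversal to the cotangent conormal directions of $\gamma$. The transversal part is handled by classical oscillatory integral estimates producing rapid decay, while the tangential part --- the regime where the propagating wavefronts are nearly tangent to $\gamma$ --- is exactly where \eqref{curvature hypotheses} is invoked. The main obstacle will be extracting a \emph{uniform} quantitative gap from \eqref{curvature hypotheses}: one must show that the sphere curvatures approach $\bfk_{\gamma(s)}(\pm \gamma'^\perp(s))$ at a rate, controlled by the nonpositive curvature of $g$, that keeps $|\kappa_\gamma - (\text{sphere curvature})|$ bounded below uniformly in $\alpha$, so that the stationary phase constants remain independent of the deck element as $L$ ranges up to $T = c\log\lambda$.
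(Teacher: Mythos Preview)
Your sketch follows essentially the same route as the paper (and \cite{emmett2}): the $TT^*$ reduction with $T=c\log\lambda$, lift to the universal cover and sum over deck transformations, a microlocal split of the measure on $\gamma$ into pieces where the connecting geodesic is nearly normal to $\gamma$ versus nearly tangent, and---for the normal piece and distant deck translates---the crucial observation that the Hessian entries $\partial_s^2\phi_\alpha$, $\partial_t^2\phi_\alpha$ involve $\kappa_\gamma$ minus a sphere curvature converging to $\bfk$, so that \eqref{curvature hypotheses} yields uniform non-degeneracy (this is Lemmas \ref{large radius} and \ref{diagonal lemma}).

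Two points where your sketch is off. First, the identity term is \emph{not} $O(\lambda^{-1/2})$, and the curvature gap plays no role there: near the diagonal the Hadamard phase $d_{\tilde g}(\tilde\gamma(s),\tilde\gamma(s'))\approx|s-s'|$ is not smooth, so one cannot run the same stationary-phase argument. Instead one isolates the small-$|\tau|$ contribution and uses the H\"ormander parametrix (Proposition \ref{beta component}); this gives $O(T^{-1})$, and it is the \emph{main} term---by the Kuznecov asymptotic \eqref{Zel} the full sum is genuinely of size $T^{-1}$, so $O(\lambda^{-1/2})$ would be too strong. Second, for deck translates at bounded distance $1\lesssim d_{\tilde g}\lesssim R$ the sphere curvature has not yet converged to $\bfk$, so \eqref{curvature hypotheses} does not directly give non-degeneracy; the paper handles these finitely many terms separately (Proposition \ref{medium time}) via a sign argument showing $\partial_s^2\phi_\alpha$ and $\partial_t^2\phi_\alpha$ cannot both vanish at a critical point.
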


A natural way to look at the period integral \eqref{i.1}  is to regard it as the 0-th order Fourier coefficient of $e_\la|_{\gamma_{per}}$. The general $\nu$-th order Fourier coefficients of $e_\la|_{\gamma_{per}}$ are called {\it generalized periods} (see \cite{Rez}). Generalized periods for hyperbolic surfaces naturally arise in the theory
of automorphic functions, and are of interest in their own right, thus have been studied considerably by number theorists.  In the paper \cite{Rez}, Reznikov showed that on compact hyperbolic surfaces, if $\gamma$ is a periodic geodesic or a geodesic circle, the $\nu$-th order Fourier coefficients of $e_\lambda|_\gamma$ is uniformly bounded if $\nu\le c_\gamma\la$ for some constant $c_\gamma$ depending on $\gamma$.  In this spirit, the second author \cite{inner} generalized Reznikov's results to arbitrary smooth closed curves over arbitrary compact Riemannian surfaces.
\begin{theorem}[\cite{inner}]\label{inner}
	Let $\gamma$ be a smooth, closed, unit-speed curve on $(M,g)$. Let $|\gamma|$ denote its length. Given $0<c<1$, if $\nu$ is an integer multiple of $2\pi|\gamma|^{-1}$ such that $0\le\frac{|\nu|}{\la}\leq c<1$,  then we have
	\begin{equation}\label{period}
	\Big|\int_{\gamma}e_\lambda(\gamma(s)) e^{-i\nu s}\, ds\Big|\le C |\gamma|\|e_\la\|_{L^1(M)},
	\end{equation}
	where the constant $C$ only depends on $(M,g)$ and $c$, and will be uniform if $\gamma$ belongs to a class of curves with bounded geodesic curvature.
\end{theorem}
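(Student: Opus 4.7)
The plan is to view the generalized period as the pairing of $e_\lambda$ with the distribution $e^{-i\nu s}\,ds$ on $\gamma$, reproduce $e_\lambda$ via a smoothed spectral projector, and reduce everything to an $L^1$-$L^\infty$ estimate. The heart of the argument is a one-dimensional stationary phase calculation whose non-degeneracy is guaranteed by the gap hypothesis $|\nu|/\lambda \le c<1$.

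Concretely, I would fix $\rho \in \mathcal{S}(\R)$ with $\rho(0)=1$ and $\hat\rho$ supported in $(-T,T)$, where $T$ is chosen less than half the injectivity radius of $M$. Since $\rho(\lambda - \sqrtg)e_\lambda = e_\lambda$, writing $K_\lambda(x,y)$ for the kernel of $\rho(\lambda - \sqrtg)$ gives
\[
\int_\gamma e_\lambda(\gamma(s))e^{-i\nu s}\,ds = \int_M F_\lambda(y)e_\lambda(y)\,dV_g(y), \qquad F_\lambda(y) := \int_\gamma K_\lambda(\gamma(s),y)e^{-i\nu s}\,ds,
\]
so by H\"older it suffices to show $\|F_\lambda\|_{L^\infty(M)} \le C|\gamma|$. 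Inserting the Hadamard parametrix for the half-wave propagator into $\rho(\lambda - \sqrtg) = (2\pi)^{-1}\int \hat\rho(t)e^{it\lambda}e^{-it\sqrtg}\,dt + \text{c.c.}$ yields the representation
\[
K_\lambda(x,y) = \lambda^{1/2}\sum_\pm e^{\pm i\lambda d_g(x,y)}a_\pm(x,y,\lambda) + O_{L^\infty}(\lambda^{-N})
\]
for $0 < d_g(x,y) < T$, with amplitudes $a_\pm$ smooth off the diagonal and carrying an integrable $d_g^{-1/2}$ geometric factor near it, while finite propagation gives $K_\lambda(x,y) = O(\lambda^{-N})$ outside $\{d_g(x,y)<T\}$. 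For fixed $y$, the intersection $\gamma \cap B_T(y)$ is a disjoint union of subarcs $I_1,\dots,I_{N(y)}$ with $N(y) = O(|\gamma|/T)$, and the desired bound reduces to a uniform estimate
\[
\lambda^{1/2}\left|\int_{I_j} e^{i\Phi_\pm(s)}a_\pm(\gamma(s),y,\lambda)\,ds\right| = O(1), \qquad \Phi_\pm(s) := \pm\lambda d_g(\gamma(s),y) - \nu s.
\]

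The core computation is then the stationary phase analysis of this oscillatory integral. One computes $\Phi_\pm'(s) = \pm\lambda\cos\theta(s) - \nu$, where $\theta(s)$ is the angle between $\gamma'(s)$ and the initial velocity of the minimizing geodesic from $y$ to $\gamma(s)$. The gap hypothesis $|\nu|/\lambda\le c<1$ forces every critical point $s_*$ to satisfy $\sin^2\theta(s_*)\ge 1-c^2>0$, so the radial geodesic from $y$ meets $\gamma$ transversally, with a quantitative angle. Differentiating once more, the leading behavior of $\Phi_\pm''$ at such $s_*$ is governed by the Hessian of $d_g(\cdot,y)$, which obeys the universal short-distance asymptotics $(\mathrm{Hess}\,d_g)(v,v)=(|v|^2-\langle v,\nabla d_g\rangle^2)/d_g+O(d_g)$. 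This produces $|\Phi_\pm''(s_*)|\gtrsim \lambda\sin^2\theta(s_*)/d_g(\gamma(s_*),y)$, and one-dimensional stationary phase, combined with integration by parts on the non-critical portions and the observation that the $d_g^{-1/2}$ singularity of $a_\pm$ is precisely cancelled by the $d_g^{1/2}$ factor coming from $|\Phi_\pm''|^{-1/2}$, gives the required $O(\lambda^{-1/2})$ bound per arc.

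The main obstacle will be to keep the Hessian lower bound uniformly in force: along subarcs where $\gamma$ nearly coincides with a level curve of $d_g(\cdot,y)$ (for example when $y$ sits near a center of curvature of $\gamma$), one cannot extract any decay from $\Phi_\pm'$ and must rely entirely on $\Phi_\pm''$, whose lower bound can in principle be spoiled by the competing geodesic-curvature term $\kappa_\gamma(s)\langle n_\gamma,\nabla_x d_g\rangle$ in the second variation formula. The bounded-geodesic-curvature hypothesis on the admissible class of curves supplies a uniform upper bound on $|\kappa_\gamma|$, so that choosing $T$ sufficiently small in terms of $\|\kappa_\gamma\|_{L^\infty}$, $c$, and the sectional curvature of $(M,g)$ ensures the Hessian term dominates and yields a constant $C$ depending only on $(M,g)$ and $c$, as claimed.
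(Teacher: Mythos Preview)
This theorem is not proved in the present paper; it is quoted as a known result from \cite{inner} and serves only as background for the main theorem. Consequently there is no proof here to compare your attempt against.

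That said, your outline is the standard route to such an estimate and is essentially sound: reproduce $e_\lambda$ by a multiplier $\rho(\lambda-\sqrtg)$ with $\hat\rho$ of small support, dualize to reduce to a sup-norm bound on $F_\lambda(y)=\int_\gamma K_\lambda(\gamma(s),y)e^{-i\nu s}\,ds$, insert the short-time parametrix, and do one-variable stationary phase in $s$. Your identification of the key mechanism---that $|\nu|/\lambda\le c<1$ forces $\sin^2\theta(s_*)\ge 1-c^2$ at any stationary point, whence the Hessian of $d_g(\cdot,y)$ supplies a nondegenerate $|\Phi_\pm''(s_*)|\gtrsim \lambda\sin^2\theta(s_*)/d_g$ which exactly cancels the $d_g^{-1/2}$ singularity of the amplitude---is correct and is the heart of the matter. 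Two small points you glossed over but which are routine: the region $d_g(\gamma(s),y)\lesssim\lambda^{-1}$ where stationary phase is not directly applicable must be excised and bounded trivially (the amplitude is integrable there), and the claim $N(y)=O(|\gamma|/T)$ on the number of subarcs in $B_T(y)$ genuinely uses the bounded-curvature hypothesis, so you should make that dependence explicit. With those caveats your sketch would constitute a complete proof.
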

As for the period integrals, the above bounds are sharp for surfaces with constant non-negative curvature.  (See \cite[Section 5]{inner}.) \eqref{period} improves \eqref{i.1} for the case $\nu=0$ by having an $L^1(M)$ norm on the right hand side, and it trivially implies these generalized periods are bounded,
	\begin{equation}\label{period'}
\Big|\int_{\gamma}e_\lambda(\gamma(s)) e^{-i\nu s}\, ds\Big|\le C |\gamma|.
\end{equation} We also remark that the frequency gap condition $0\le\frac{\nu}{\la} \leq c < 1$ is necessary, in the sense that, at the resonant frequency $\nu=\la$, \eqref{period} fails to hold on $S^2$. Indeed, on $S^2$,  $L^2$-normalized highest weight spherical harmonics with frequency $\la$ restricted to the equator have $\nu$-Fourier coefficient $\sim\nu^\frac14$, which represents a big jump from \eqref{period}.

Another key insight provided by 
\cite{Gauss} and \cite{emmett2} is that under suitable curvature assumptions (for both $\gamma$ and $M$), the $\nu$-th Fourier coefficients of $e_\lambda|_{
	\gamma}$ satisfies
\begin{equation}\label{gauss'}
\Bigl|\, \int_{\gamma} e_\la e^{-i\nu s} \, ds\, \Bigr| \le C_\nu|\gamma|(\log\lambda)^{-\frac12},
\end{equation} where $C_\nu$ is a constant times a positive power of $\nu$. On the other hand, it is  conjectured in \cite{Rez} that for compact hyperbolic surface, we should expect much better estimates.
\begin{conjecture}[\cite{Rez}]\label{C2}Let $\gamma$ be a closed geodesic or a geodesic circle on a compact hyperbolic surface $(M,g)$. Then given $\eps>0$, $0<c<1$, there exists a constant $C_\eps$ depending on $\eps$, $M$ and the length of $\gamma_{per}$, such that for $0\le\frac{\nu}{\la}\leq c <1$, we have
	\begin{equation}	\Bigl|\, \int_{\gamma_{per}} e_\la(\gamma_{per}(s)) e^{-i\nu s} \, ds\, \Bigr| \le C_\eps\la^{-\frac12+\eps}.
	\end{equation}		
	
\end{conjecture}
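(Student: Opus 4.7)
The plan is to lift the integral to the universal cover $\mathbb H^2=\widetilde M$ and attack the problem through the representation theory of $G=PSL_2(\mathbb R)$, following the automorphic framework of Bernstein--Reznikov. Writing $M=\Gamma\backslash\mathbb H^2$, the eigenfunction $e_\lambda$ embeds as the spherical vector in an automorphic principal-series representation $\pi_\lambda$ of $G$ on $\Gamma\backslash G$. The stabilizer of the lift $\widetilde\gamma\subset\mathbb H^2$ is a one-parameter subgroup $H\subset G$: the split torus $A$ when $\gamma$ is a closed geodesic, a maximal compact $K_p$ when $\gamma$ is a geodesic circle. The $\nu$-th generalized period $\int_\gamma e_\lambda e^{-i\nu s}\,ds$ then becomes the pairing of $e_\lambda$ with an $H$-equivariant automorphic distribution $\phi_\nu$ of weight $\nu$.

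First I would apply the triple-product identity of Bernstein--Reznikov to bound the square of the period by a $G$-invariant inner product $\langle e_\lambda\otimes\overline{e_\lambda},\Psi_\nu\rangle_{M\times M}$, where $\Psi_\nu$ is an explicit kernel built from $\phi_\nu\otimes\overline{\phi_\nu}$ depending only on geodesic distance. The frequency-gap hypothesis $0\le\nu/\lambda\le c<1$ enters through the Langlands parameter, ensuring that $\nu$ lies in the tempered range of $\pi_\lambda$ and preventing the resonant blow-up that occurs at $\nu=\lambda$ on $S^2$. Next I would spectrally expand $\Psi_\nu$ via the Plancherel formula on $G$: the diagonal contribution of the eigenvalue $1/4+\lambda^2$ gives the convexity bound $O(1)$, recovering Theorem \ref{inner}, while the sought-after power saving $\lambda^{-1/2+\eps}$ must come from cancellation in the off-diagonal spectral sum, extracted by stationary-phase analysis of the Bessel transform of $\Psi_\nu$.

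The principal obstacle, and the reason the conjecture remains open, is precisely this last cancellation: it is equivalent to a subconvexity estimate for a family of triple-product $L$-functions attached to $e_\lambda\otimes\overline{e_\lambda}\otimes\phi_\nu$. On an arithmetic $M$ one could invoke amplification via Hecke operators in the spirit of Iwaniec--Sarnak, but for a general cocompact $\Gamma$ no such tool is available, and the best unconditional bound remains the logarithmic one \eqref{gauss'} coming from the Gauss--Bonnet geometry of \cite{Gauss}. A realistic intermediate target is a small polynomial saving $O(\lambda^{-\alpha})$ for some $\alpha>0$, which one might try to extract by combining the tangential/transversal microlocal decomposition of the measure on $\gamma$ alluded to in the abstract with a sharper dispersive estimate for the wave kernel on $\mathbb H^2$ integrated against $\phi_\nu$; pushing $\alpha$ up to $1/2$ is the genuinely hard part of Reznikov's conjecture and would require a fundamentally new arithmetic or spectral input beyond the scope of the curvature-based methods developed here.
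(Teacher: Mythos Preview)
The statement you are asked to prove is labeled \emph{Conjecture} in the paper, and the paper does not prove it; indeed the main theorem of the paper only establishes the far weaker bound $O((\log\lambda)^{-1/2})$ for geodesic circles (and certain other curves), which the authors present as a first step toward Conjecture~\ref{C2}. So there is no ``paper's own proof'' to compare against.

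Your write-up is not a proof proposal so much as an accurate diagnosis of why the conjecture is open. You correctly identify the Bernstein--Reznikov automorphic framework, the role of the one-parameter subgroup $H$ stabilizing $\widetilde\gamma$, and the reduction to a subconvexity problem for triple-product $L$-functions; you also correctly note that without arithmetic structure (Hecke amplification) no route to the full $\lambda^{-1/2+\eps}$ saving is known. All of this is sound as commentary, but none of it constitutes a proof: the ``off-diagonal cancellation'' step you flag is precisely the missing input, and you say so yourself. The microlocal decomposition you mention at the end is exactly what the present paper carries out, and it yields only the logarithmic gain, not a polynomial one.

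In short: your assessment of the landscape is correct, but you have not proved the conjecture, and neither has the paper. If the assignment was to supply a proof, the honest answer is that none is currently available for general cocompact $\Gamma$.
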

The above conjecture, if true, illustrates the huge differences between the sphere/ torus case and the hyperbolic  case. The curvature of the surface being negative somehow ``filters" out almost all lower frequency oscillations of eigenfunctions over certain closed curves. In a recent paper of the second author, a first result towards the above conjecture was obtained for the case when $\gamma$ is a periodic geodesic.
\begin{theorem}[\cite{GP}]\label{GP}
	Let $\gamma=\gamma_{per}$ be a periodic geodesic on a Riemannian surface $(M,g)$ with curvature $K$ satisfying the averaged vanishing conditions in the sense of \cite{Gauss}. Given $0<c<1$,  if $\nu$ is an integer multiple of $2\pi|\gamma|^{-1}$ such that $0\le\frac{|\nu|}{\la}=\epsilon\leq c<1$,  then we have
	\begin{equation}\label{GPs}
	\Big|\int_{\gamma}e_\lambda(\gamma(s)) e^{-i\nu s}\, ds\Big|\le C |\gamma|(\log\la)^{-\frac12},
	\end{equation}
	where the constant $C$ only depends on $M$ and $c$.
\end{theorem}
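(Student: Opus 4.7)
The plan is to reduce $|I_\nu(\la)|^2$ via a $TT^*$ argument to a bilinear kernel estimate, then analyze this kernel by lifting to the universal cover and adapting the geodesic rectangle count of \cite{Gauss} to the oscillating factor $e^{-i\nu s}$. First, I would fix a Schwartz function $\rho$ with $\rho(0) = 1$ and $\hat\rho \in \Coi(\R)$ supported in a small interval, set $T = c_0 \log\la$ with $c_0$ smaller than the reciprocal of the maximal expansion rate of Jacobi fields on $(\R^2, \tg)$, and note that $\rho(T(\la - \sqrtg))e_\la = e_\la + O(\la^{-N})$. Writing $I_\nu(\la)$ as a pairing against $e^{-i\nu s}\,\delta_\gamma$ and applying $TT^*$ yields
\begin{equation*}
|I_\nu(\la)|^2 \le \int_\gamma \int_\gamma K_\la(\gamma(s), \gamma(s'))\, e^{-i\nu(s - s')}\, ds\, ds',
\end{equation*}
where $K_\la$ is the Schwartz kernel of $\rho(T(\la-\sqrtg))\rho(T(\la-\sqrtg))^*$, expressible via Fourier inversion as a short-time average $\frac{1}{2\pi T}\int \hat\Psi(t/T)\,e^{it\la}\,(e^{-it\sqrtg})(x,y)\,dt$ of the half-wave propagator.

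Next, I would lift to the universal cover $(\R^2, \tg)$ and use the Poisson-type identity $e^{-it\sqrtg}(x,y) = \sum_{\alpha\in\Gamma} e^{-it\sqrtd}(\tilde x, \alpha \tilde y)$, where $\Gamma$ is the deck group. The identity term $\alpha = \mathrm{id}$ contributes $O((\log\la)^{-1})$ to $|I_\nu|^2$ via standard stationary phase: since $|\nu|/\la \le c < 1$, the phase $\la\dgt(\tgamma(s),\tgamma(s')) - \nu(s-s')$ has no critical points away from the diagonal, and the contribution from the diagonal neighborhood is $O(1/T)$. For each nontrivial $\alpha \in \Gamma$, after a Hadamard parametrix reduction in $t$, the contribution becomes an oscillatory double integral in $(s, s')$ with phase
\begin{equation*}
\la\Phi_\alpha(s,s') = \la\,\dgt(\tgamma(s), \alpha\tgamma(s')) - \nu(s - s').
\end{equation*}
By the first variation of arc length, a critical point corresponds to a geodesic chord from $\tgamma(s)$ to $\alpha\tgamma(s')$ meeting $\tgamma$ and $\alpha\tgamma$ at matching angles $\theta = \arccos(\pm\nu/\la)$. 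Since $|\nu|/\la \le c < 1$, this angle is uniformly bounded away from $0$ and $\pi$, so the Hessian of $\Phi_\alpha$ at such a critical point is nondegenerate (in the spirit of \cite{CSPer}), and each such $\alpha$ contributes $O(T^{-1}\la^{-1})$ after stationary phase in $(s,s')$.

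The main obstacle is controlling the sum over the $O(e^{CT})$ nontrivial deck transformations whose orbits enter a bounded neighborhood of $\gamma$. Each stationary configuration realizes a geodesic quadrilateral in the universal cover with two opposite vertices on $\tgamma$ and $\alpha\tgamma$ joined by a chord at matching angles $\theta$; by Gauss--Bonnet, its angular excess equals $\int_Q K_{\tg}\,dA$, and the averaged-vanishing-of-finite-type hypothesis of \cite{Gauss} forces this integral to be quantitatively nonzero often enough that all but $O(1)$ of the $\alpha$'s can be removed by integration by parts in $(s,s')$, each contributing $O(\la^{-N})$. Collecting the surviving contributions yields $|I_\nu(\la)|^2 = O(T^{-1}) = O((\log\la)^{-1})$, proving the claim. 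The essential new input beyond the $\nu = 0$ case of \cite{Gauss} is that the Fourier factor merely shifts the stationary chord angle from $\pi/2$ to $\arccos(\nu/\la)$ without altering the rectangle-counting mechanism, since the chord still meets both geodesics at equal angles.
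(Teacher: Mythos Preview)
Note first that this theorem is quoted from \cite{GP} and is not proved in the present paper; the paper only summarizes the argument in the paragraph following the statement. Your overall architecture---$TT^*$ reduction, lift to the universal cover, Hadamard parametrix, and phase analysis of $\Phi_\alpha$ for each deck transformation---matches that summary and the scaffolding the paper itself uses for its main theorem.

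The gap is in how you dispatch the sum over $\alpha\in\Gamma\setminus\{I\}$. You claim Gauss--Bonnet shows that all but $O(1)$ of the $\alpha$'s admit no critical point and hence contribute $O(\la^{-N})$ after integration by parts. Both assertions fail. A critical point of $\Phi_\alpha$ is a \emph{single} geodesic chord meeting $\tgamma$ and $\alpha(\tgamma)$ at angle $\arccos\epsilon$; no quadrilateral has to close up for this to exist, and for $\nu=0$ it is simply the common perpendicular of the two lifted geodesics, which exists for essentially every $\alpha$. More damagingly, the amplitude and its derivatives are only bounded by $e^{C r_\alpha}$ (cf.\ \eqref{kernel3}), so each integration by parts gains $\la^{-1}$ but costs $e^{C r_\alpha}$; with $r_\alpha$ as large as $T=c_0\log\la$ you never obtain $O(\la^{-N})$ uniformly in $\alpha$.

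What Gauss--Bonnet actually supplies---in the paper's words, ``favorable controls over derivatives of the phase functions''---is a lower bound on $|\partial_s^2\Phi_\alpha|$ and $|\partial_t^2\Phi_\alpha|$ near critical points that is \emph{uniform in $\alpha$}: the quadrilateral defect quantifies how far the configuration is from admitting parallel chords, which is exactly what would make the Hessian degenerate. With this uniform nondegeneracy, stationary phase gives each $\alpha$ a contribution of order $T^{-1}e^{C r_\alpha}\la^{-1/2}$ (recall the $\la^{1/2}$ prefactor already in the kernel), and summing over the $O(e^{CT})$ relevant $\alpha$'s yields $O(T^{-1}e^{C'T}\la^{-1/2})=o(T^{-1})$ once $c_0$ is chosen small enough. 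This is precisely the endgame in Propositions~\ref{nonsmall time} and \ref{large time} of the present paper.
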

It is clear that Theorem \ref{GP} is an improvement over both \eqref{period'} and \eqref{gauss}, where it is better than \eqref{period'} for negatively curved surfaces, and contains \eqref{gauss'} as the special case  when $\nu=0$. This was the first result showing generalized periods converge to 0 uniformly for all $|\nu|<c\la$ over closed geodesics on compact hyperbolic surfaces.  

The proof of Theorem \ref{GP} followed the strategies developed in \cite{CSPer} and \cite{Gauss}. The Gauss-Bonnet Theorem was used to exploit the defects of geodesic quadrilaterals which arise naturally in these
arguments. Gauss-Bonnet Theorem allows one to quantitatively avoid geodesic parallelograms on negatively curved surfaces, which in turn provides favorable controls over  derivatives of the phase functions 
which occur in the stationary phase arguments.  

The purpose of this paper is to prove log-improved generalized periods bounds for a larger class of curves, which, in particular, includes geodesic circles of arbitrary radius. Since we will not be dealing with geodesics mostly, the Gauss-Bonnet Theorem will not be as handy as in \cite{Gauss} and \cite{GP}. Instead, we shall turn to the strategies developed by the first author in \cite{emmett1} and \cite{emmett2}, and use these  to prove an analog of Theorem \ref{W} for generalized periods over curves satisfying assumptions analogous to \eqref{curvature hypotheses}. However, in this case, our curvature assumption on $\gamma$ has to involve the frequency ratio $\epsilon=\nu/\la$. Our main result is the following.
\begin{theorem}[Main Theorem]\label{main}
	Let $\gamma$ be a smooth, closed, unit-speed curve on a compact Riemannian surface $(M,g)$ with nonpositive curvature. Let $E_\gamma$ denote the set of $\epsilon \in (-1,1)$ for which
	\begin{align}
	\label{curv hyp +'} \left\langle \frac{D}{dt} \gamma', \gamma'^\perp \right\rangle &\neq - \sqrt{1 - \epsilon^2} \bfk_\gamma(\gamma'^\perp),  \qquad \text{ and } \\
	\label{curv hyp -'} \left\langle \frac{D}{dt} \gamma', \gamma'^\perp \right\rangle &\neq \sqrt{1 - \epsilon^2} \bfk_\gamma(-\gamma'^\perp)
	\end{align}
	at each point along $\gamma$.
	If $\nu$ is an integer multiple of $2\pi|\gamma|^{-1}$ such that $\nu/\lambda \in E_\gamma$, then we have
	\begin{equation}\label{periods}
	\left|\int_{\gamma}e_\lambda(\gamma(s)) e^{-i\nu s}\, ds\right| \le C (\log\la)^{-\frac12},
	\end{equation}
	where $C$ is uniform for $\nu/\lambda$ in a compact subset of $E_\gamma$.
\end{theorem}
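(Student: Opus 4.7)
The plan is to adapt the strategy Wyman developed in \cite{emmett1, emmett2} for the period case ($\nu=0$, Theorem \ref{W}) to handle the additional oscillation $e^{-i\nu s}$. I begin with Sogge's reproducing identity $e_\lambda = \rho(T(\lambda-P))e_\lambda$ where $P = \sqrtg$, $\rho$ is Schwartz with $\rho(0) = 1$ and $\hat\rho\in C_c^\infty$ of small support, and $T = c\log\lambda$. Writing $\mu_\nu = e^{i\nu s}\,ds$ for the oscillating measure carried by $\gamma$, duality reduces matters to
\[
\bigl|\langle e_\lambda, \mu_\nu\rangle\bigr|^2 \;\leq\; \bigl\langle \rho(T(\lambda-P))^2\mu_\nu,\,\mu_\nu\bigr\rangle_{L^2(M)}.
\]
Lifting to the Cartan--Hadamard cover $(\R^2,\tg)$ and inserting the Hadamard parametrix for $\cos(tP)$, the right side unfolds into a sum over deck transformations $\alpha\in\pi_1(M)$ of double integrals over $\gamma\times\gamma$ with total phase $\lambda\,\dgt(\tilde\gamma(s), \alpha\tilde\gamma(t)) + \nu(t-s)$ and an amplitude determined by the transverse Jacobi field along the corresponding geodesic in the cover.

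The identity term is handled by direct calculation on the cover and yields the main contribution of size $(\log\lambda)^{-1/2}$. The key new step, as announced in the abstract, is a microlocal decomposition $\mu_\nu = \mu_\nu^{\mathrm{tan}} + \mu_\nu^{\mathrm{tr}}$ via a pseudodifferential cutoff along $\gamma$ that isolates directions inside a small cone around $\pm\gamma'$ (tangential part) from directions making an angle bounded away from $0$ and $\pi$ with $\gamma'$ (transversal part). Since $|\nu/\lambda|\leq 1-\delta$, the tangential symbol $\mu_\nu^{\mathrm{tan}}$ lives at tangential frequencies separated from $\lambda$; a non-stationary phase integration in $s$ contributes $O(\lambda^{-N})$ for arbitrary $N$. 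All of the geometric content therefore resides in $\mu_\nu^{\mathrm{tr}}$.

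For each non-identity $\alpha$, stationary phase in $(s,t)$ applied to the phase $\lambda\,\dgt(\tilde\gamma(s), \alpha\tilde\gamma(t)) + \nu(t-s)$ forces the connecting geodesic in the cover to meet $\gamma$ at an angle $\theta$ with $\cos\theta = \pm\epsilon$, $\epsilon = \nu/\lambda$, so that the transverse momentum at each endpoint has magnitude $\lambda\sqrt{1-\epsilon^2}$. A Jacobi-field calculation along this critical connector identifies the stationary-phase Hessian as a combination of the signed geodesic curvature $\langle D_t\gamma', \gamma'^\perp\rangle$ and the limiting-circle curvature $\bfk_\gamma(\pm\gamma'^\perp)$ weighted by $\sin\theta = \sqrt{1-\epsilon^2}$; non-degeneracy of the Hessian reads
\[
\left\langle \tfrac{D}{dt}\gamma',\,\gamma'^\perp\right\rangle \;\neq\; \pm\sqrt{1-\epsilon^2}\,\bfk_\gamma(\pm\gamma'^\perp),
\]
which is precisely the hypothesis \eqref{curv hyp +'}--\eqref{curv hyp -'}. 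The factor $\sqrt{1-\epsilon^2}$ is the sine of incidence, while $\bfk_\gamma$ emerges from the long-time asymptotics of the stable transverse Jacobi field, exactly as in the period analysis of \cite{emmett2}. Under this non-degeneracy, each $\alpha$-term acquires the expected stationary-phase decay, and summation over $\alpha$ with $\dgt\leq T$, using the finite-propagation restriction imposed by $\hat\rho$ and a Cauchy--Schwarz bound on the deck sum, delivers the $(\log\lambda)^{-1/2}$ rate.

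The main technical obstacle will be verifying the Hessian non-degeneracy in an $\alpha$-uniform fashion, i.e.\ showing that the pointwise conditions \eqref{curv hyp +'}--\eqref{curv hyp -'} translate to a quantitative lower bound on the Hessian valid for all deck-translates of cover-distance up to $c\log\lambda$, uniformly in $\epsilon$ on compact subsets of $E_\gamma$. This amounts to controlling the rate at which the transverse Jacobi field along a long geodesic in the nonpositively curved cover approaches the stable Jacobi field defining $\bfk_\gamma$, and requires care near the thresholds $\epsilon\to\pm 1$ where the incidence angle degenerates. Once this uniformity is in place, the remainder of the argument tracks the $\nu=0$ case of \cite{emmett2} closely, with the oscillating factor $e^{i\nu(t-s)}$ absorbed into the microlocal setup via the tangential/transversal split.
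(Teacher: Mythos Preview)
Your proposal is essentially correct and follows the paper's approach: reproducing kernel with $T=c\log\lambda$, $TT^*$ and lift to the cover, the microlocal tangential/transversal split of the measure (the paper's $B_0$ versus $B_{\pm1}$), nonstationary phase for the tangential piece since $|\epsilon|\le 1-\delta$, and stationary phase for the transversal piece with the Hessian controlled by the curvature hypotheses \eqref{curv hyp +'}--\eqref{curv hyp -'}. Two small corrections: the tangential contribution is $O(e^{CT}\lambda^{-1/2})$ per deck term rather than $O(\lambda^{-N})$, because the amplitude derivatives grow like $e^{Cr_\alpha}$ (this is still more than enough once $c$ is small); and the $\alpha$-uniform Hessian lower bound you correctly flag as the main obstacle is obtained in the paper by a further split of the transversal deck terms into a finite set with $\dgt\le R$ (handled one at a time, Proposition \ref{medium time}) and those with $\dgt>R$ (where the geodesic-circle curvature is quantitatively close to $\bfk$ by Lemma \ref{large radius}, yielding Proposition \ref{large time}); no Cauchy--Schwarz over the deck group is needed, just the crude exponential count of terms.
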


\begin{remark} \label{main remark}
Fix $\gamma$ and $E_\gamma$ as in Theorem \ref{main}, and let $K \subset E_\gamma$ be compact. Note $E_\gamma$ still contains $K$ even if we perturb $\gamma$ so that the change in the first two derivatives of $\gamma$ are small. At the same time, careful observation throughout the proof of Theorem \ref{main} shows that the constant $C$ is uniform if we perturb $\gamma$ so that the change in the first $N$ derivatives of $\gamma$ are small, where $N$ is some fixed, finite number. Hence the constant $C$ in \eqref{periods} is uniform if $(\gamma, \nu/\lambda)$ belongs to a compact subset of
\[
	\{ (\gamma,\epsilon) : \gamma \in C^\infty(\R,M), \epsilon \in E_\gamma \} \subset C^\infty(\R,M) \times (-1,1)
\]
with respect to the subspace topology.
\end{remark}

On the flat torus $\mathbb T^2$, \eqref{curv hyp +'}-\eqref{curv hyp -'} are valid for any closed curves with non-vanishing curvature, since $\mathbf k_p(v)\equiv0$ in this case. On the other hand, for hyperbolic surfaces with curvature $K=-1$, it is known that $\mathbf k_p(v)\equiv1$, thus Theorem \ref{main} implies that any curves with curvature bounded away from $\sqrt{1-\epsilon^2}$ will enjoy log-improved generalized periods bounds for $\nu=\epsilon\la$. An direct yet significant corollary of our main theorem gives the first result towards Conjecture \ref{C2} in the case of geodesic circles. It follows from the fact that for any given geodesic circle on a Riemannian surface of nonpositive curvature always satisfies \eqref{curv hyp +'}-\eqref{curv hyp -'}, and therefore, Theorem \ref{main} and Remark \ref{main remark} implies the following.

\begin{corr} \label{corollary}
	Let $\gamma$ be a unit-speed geodesic circle on a compact Riemannian surface $(M,g)$ with nonpositive curvature. If $\nu$ is an integer multiple of $2\pi |\gamma|^{-1}$, then given any $0<\delta<1$, we have
	\[
	\left| \int_\gamma e_\lambda(\gamma(s)) e^{-i\nu s} \, ds \right| \leq C(\log \lambda)^{-1/2},
	\]
	where the constant $C$ is uniform over the sets of all geodesic circles $\gamma$ with radii in $[\delta, \delta^{-1}]$ and all $\nu$ with $|\nu|/\lambda$ in $[0, 1 - \delta]$.
\end{corr}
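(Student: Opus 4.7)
The strategy is to apply Theorem \ref{main} together with Remark \ref{main remark} directly. What must be checked is that every geodesic circle $\gamma \subset M$ of radius $r>0$ satisfies the non-degeneracy conditions \eqref{curv hyp +'}--\eqref{curv hyp -'} at each of its points for every $\epsilon \in (-1,1)$, so that $E_\gamma = (-1,1)$, and that the resulting non-degeneracy is uniform over $(\gamma,\epsilon)$ with $\gamma$ a geodesic circle of radius in $[\delta,\delta^{-1}]$ and $|\epsilon| \le 1-\delta$.

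To verify the pointwise statement, I would lift $\gamma$ to the universal cover $(\R^2, \tilde g)$; by Cartan--Hadamard the lift $\tilde\gamma$ is a smooth Riemannian circle of radius $r$ about a single center $\tilde p_0$. Fix $q\in\tilde\gamma$, let $\zeta$ be the unit-speed radial geodesic with $\zeta(0)=q$ and $v_{\mathrm{in}}:=\zeta'(0)$ pointing toward $\tilde p_0$, and set $v_{\mathrm{out}}:=-v_{\mathrm{in}}$. Then the geodesic curvature $\kappa_r(q)$ of $\gamma$ at $q$ equals the initial logarithmic derivative of the perpendicular Jacobi field along $\zeta$ that vanishes at the center $\tilde p_0$, while $\bfk_q(v_{\mathrm{out}})$ from Definition \ref{def k} is the initial logarithmic derivative of the perpendicular Jacobi field along the geodesic in direction $v_{\mathrm{out}}$ that stays bounded at infinity. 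Both quantities arise as values of solutions to the same scalar Riccati equation $u' + u^2 + K = 0$, and classical monotonicity on nonpositively curved surfaces---the ``vanishes at finite distance" solution sits strictly above the ``bounded at infinity" solution---yields the strict inequality $\kappa_r(q) > \bfk_q(v_{\mathrm{out}})$ for every finite $r > 0$, with a gap depending continuously on $(q,r)$.

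Choosing $\gamma'^\perp$ to be the inward normal, we have $\langle D_t\gamma', \gamma'^\perp\rangle = \kappa_r(q) > 0$ while $\bfk_q(\gamma'^\perp) = \bfk_q(v_{\mathrm{in}}) \ge 0$, so condition \eqref{curv hyp +'} holds automatically by a sign mismatch. The substantive condition \eqref{curv hyp -'} reads $\kappa_r(q) \neq \sqrt{1 - \epsilon^2}\,\bfk_q(v_{\mathrm{out}})$, and since $\sqrt{1-\epsilon^2} \le 1$ and $\kappa_r(q) > \bfk_q(v_{\mathrm{out}})$, this holds for every $\epsilon \in (-1,1)$; hence $E_\gamma = (-1,1)$. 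For the required uniformity, the family of geodesic circles of radius in $[\delta,\delta^{-1}]$ is parametrized by the compact set $M \times [\delta,\delta^{-1}]$, and on this set $\min_{q\in\gamma}(\kappa_r(q) - \bfk_q(v_{\mathrm{out}}))$ is a continuous, strictly positive function, hence bounded below by some $c(\delta) > 0$; restricting to $|\epsilon| \le 1-\delta$ only widens the gap in \eqref{curv hyp -'}. Therefore $(\gamma, \nu/\lambda)$ traces out a compact subset of the space in Remark \ref{main remark}, and Theorem \ref{main} applies with a uniform constant.

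The main conceptual step, and the only one where care is required, is the Riccati comparison: one must fix sign conventions so that $\kappa_r$ really lands strictly above $\bfk_q(v_{\mathrm{out}})$ rather than below. This is a classical feature of stable/unstable Jacobi fields on Cartan--Hadamard surfaces, so the proof reduces to a careful unpacking of Definition \ref{def k} combined with a compactness argument; no new analytic input beyond Theorem \ref{main} is needed.
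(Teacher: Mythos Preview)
Your proposal is correct and follows essentially the same route as the paper: choose $\gamma'^\perp$ to be the inward normal, note that \eqref{curv hyp +'} holds because both terms are nonnegative and the first is strictly positive, and verify \eqref{curv hyp -'} via the strict inequality $\kappa_r(q) > \bfk_q(-\gamma'^\perp)$ together with $\sqrt{1-\epsilon^2}\le 1$, then conclude uniformity by compactness of the parameter set and invoke Theorem~\ref{main} with Remark~\ref{main remark}. The only difference is that where you sketch the key inequality $\kappa_r(q) > \bfk_q(v_{\mathrm{out}})$ via a Riccati/Jacobi-field comparison, the paper simply cites Lemma~\ref{large radius} (taken from~\cite{emmett2}), which records precisely this fact; your argument is in effect a re-derivation of that lemma.
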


 Note that the number $\delta$ in Corollary \ref{corollary} can be taken to be arbitrarily close to 0, and thus this log-improved bounds indeed hold for any fixed geodesic circle with arbitrary radius.

Another corollary is about the weak $L^2(\gamma)$ limit of eigenfunctions restricted to curves. 

\begin{corr} \label{co}	Let $\gamma$ be a curve on a compact Riemannian surface $(M,g)$ with nonpositive curvature that satisfies the assumption of Theorem \ref{main}, with $0$ being an interior point of $E_\gamma$, e.g. a geodesic circle.  Then  for any orthonormal sequence of eigenfunctions $\{e_{\la_j}\}$ with frequency $\la_j$, $e_{\la_j}|_\gamma\rightarrow 0$ weakly in $L^2(\gamma)$ if and only if it is bounded in $L^2(\gamma)$.
\end{corr}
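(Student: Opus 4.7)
The plan is to reduce Corollary \ref{co} to Theorem \ref{main} via a routine Fourier-series argument, so essentially no new work is needed beyond what Theorem \ref{main} already provides. First, I would dispose of the ``only if'' direction, which is automatic: any weakly convergent sequence in the Hilbert space $L^2(\gamma)$ is norm-bounded by the uniform boundedness principle. The substantive content is thus the ``if'' direction.

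For the forward direction, I would parametrize $\gamma$ by $\R/|\gamma|\Z$ and exploit the fact that the normalized exponentials $|\gamma|^{-1/2} e^{i\nu s}$, with $\nu$ ranging over integer multiples of $2\pi|\gamma|^{-1}$, form an orthonormal basis of $L^2(\gamma)$. Since the hypothesis supplies a uniform $L^2$-bound $\|e_{\la_j}|_\gamma\|_{L^2(\gamma)}\le C_0$, the standard principle that a uniformly bounded sequence converges weakly to $0$ whenever it does so when tested against a total subset reduces matters to showing that, for each fixed admissible $\nu$,
\[
\int_\gamma e_{\la_j}(\gamma(s))\, e^{-i\nu s}\, ds \longrightarrow 0 \quad \text{as } j \to \infty.
\]

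To verify this, I would note that $\la_j \to \infty$: the sequence $\{e_{\la_j}\}$ is orthonormal in $L^2(M)$ and each eigenspace of $-\Delta_g$ is finite-dimensional by compactness of $M$. Hence for any fixed $\nu$, $|\nu|/\la_j \to 0$. Since $0$ is an interior point of $E_\gamma$ by hypothesis, I can fix a compact subinterval $K \subset E_\gamma$ containing $0$ in its interior, and then $\nu/\la_j \in K$ for all sufficiently large $j$. Theorem \ref{main}, together with the uniformity of its constant on compact subsets of $E_\gamma$, then gives
\[
\left|\int_\gamma e_{\la_j}(\gamma(s))\, e^{-i\nu s}\, ds\right| \leq C_K (\log \la_j)^{-1/2},
\]
which tends to $0$.

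There is no substantial obstacle in this argument; the only minor subtlety is checking that a single compact $K \subset E_\gamma$ absorbs all tails $\nu/\la_j$ for each fixed $\nu$, which is immediate from $0 \in \mathrm{int}(E_\gamma)$ together with $\la_j \to \infty$. The corollary is thus a clean packaging of Theorem \ref{main} via the standard fact about weak convergence against a total subset.
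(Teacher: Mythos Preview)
Your proof is correct and follows essentially the same approach as the paper: the paper also handles the ``only if'' via uniform boundedness, and for the ``if'' direction it expands a test function $g\in L^2(\gamma)$ in its Fourier series, truncates at a finite level $N$, uses Theorem \ref{main} on the low modes (noting $|k|/\la_j\to 0\in\mathrm{int}(E_\gamma)$), and controls the tail by Cauchy--Schwarz together with the assumed $L^2(\gamma)$-bound. Your invocation of the ``weak convergence against a total subset'' principle is just the abstract form of that same truncation argument.
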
 

The quantum ergodic restriction (QER) theorem of Toth and Zelditch \cite{TZ} implies that for a compact hyperbolic surface, any orthonormal sequence of eigenfunctions will have a full density subsequence that has bounded $L^2(\gamma)$ normal over a given geodesic circle $\gamma$. Therefore Corollary \ref{co} implies the following.

\begin{corr} Let $(M,g)$ be a compact hyperbolic surface, $\{e_{\la_j}\}$ an orthonormal basis of eigenfunctions of frequency $\la_j$. Then given a geodesic circle $\gamma$, there exists a density one subsequence $ \{e_{\la_{j_k}}\}$ such that $ e_{\la_{j_k}}|_\gamma\rightarrow 0$ in weak $L^2(\gamma)$.
\end{corr}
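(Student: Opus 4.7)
The plan is to combine the QER theorem of Toth--Zelditch with Corollary \ref{co} applied to the geodesic circle $\gamma$. First, I would observe that on a compact hyperbolic surface (which has constant negative curvature $-1$), any geodesic circle $\gamma$ satisfies the hypotheses of the Main Theorem with $0$ lying in the interior of $E_\gamma$: indeed, $\bfk_p(v) \equiv 1$ for all $p, v$ on a hyperbolic surface, while a geodesic circle of radius $r$ has constant geodesic curvature $\coth r \neq \pm \sqrt{1-\epsilon^2}$ for all $\epsilon$ near $0$, so \eqref{curv hyp +'}--\eqref{curv hyp -'} hold on a neighborhood of $0$. This places us in the setting where Corollary \ref{co} applies to $\gamma$.

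Next, I would invoke the QER theorem of \cite{TZ}: since the geodesic flow on a compact hyperbolic surface is ergodic and a geodesic circle is a non-characteristic curve for which Toth--Zelditch's hypotheses are verified (the relevant Jacobian conditions for the QER statement hold for closed curves that are not geodesics, and geodesic circles clearly qualify), any orthonormal basis of eigenfunctions $\{e_{\la_j}\}$ admits a density-one subsequence $\{e_{\la_{j_k}}\}$ along which $\|e_{\la_{j_k}}|_\gamma\|_{L^2(\gamma)}$ is bounded --- in fact it converges to the expected QER limit, which in particular gives uniform $L^2(\gamma)$ bounds. This is precisely the hypothesis needed to feed into Corollary \ref{co}.

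Finally, I would apply Corollary \ref{co} to the orthonormal subsequence $\{e_{\la_{j_k}}\}$: since this subsequence is uniformly bounded in $L^2(\gamma)$ and still satisfies the orthonormality condition (as a subset of the original orthonormal basis), Corollary \ref{co} gives $e_{\la_{j_k}}|_\gamma \to 0$ weakly in $L^2(\gamma)$, which is the desired conclusion. Here density one is preserved because we extract only from the full-density QER subsequence.

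The only real subtlety, which I would address carefully, is confirming that the form of the QER theorem cited in \cite{TZ} genuinely yields uniform $L^2(\gamma)$ boundedness of the restrictions along a full density subsequence for a \emph{geodesic circle} (as opposed to, say, only a geodesic segment). The main Toth--Zelditch result is stated for hypersurfaces that are ``admissible'' in a precise microlocal sense; I would verify that geodesic circles in a compact hyperbolic surface satisfy admissibility, referencing the relevant extension of QER to closed curves. Once this bookkeeping is done, the combination argument is immediate and the result follows.
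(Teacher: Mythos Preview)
Your proposal is correct and follows essentially the same approach as the paper: invoke the QER theorem of Toth--Zelditch to obtain a density-one subsequence with bounded $L^2(\gamma)$ restrictions, then apply Corollary~\ref{co} to conclude weak-$L^2(\gamma)$ convergence to zero. The paper states this combination in a single sentence, while you have spelled out the hypothesis-checking (that $0 \in \operatorname{int} E_\gamma$ for geodesic circles, already contained in Corollary~\ref{corollary}, and the admissibility of geodesic circles for QER) more explicitly.
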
 

Our paper is organized as follows.  In the next section we shall perform several standard reductions by using the reproducing kernels for eigenfunctions and lifting the calculations to the universal cover $(\mathbb R^2,\widetilde g)$. By microlocally decomposing the measure of $\gamma$ into one tangential and two transversal components, we reduce the proof of Theorem \ref{main} to estimating a few microlocalized
oscillatory integrals over curves in $(\mathbb R^2,\widetilde g)$. In \S 3,  we recall a stationary phase technique from \cite{HI}, and then use it to handle the term which is local in time. In \S 4, we gather a few bounds for the non-local kernel which are from \cite{Gauss}, and then use them to handle the tangential term.
In \S 5 we prove a few phase function bounds by employing strategies from \cite{emmett2}, and then finish the proof of Theorem \ref{main} by proving favorable bounds for the two transversal parts.
The proof for the corollaries can be found at the end of \S 5.

In what follows, by possibly rescaling the metric, we shall assume that the injectivity radius of $M$, $\text{Inj}\,M$, is at least 10. We shall always use the letter $\epsilon$ to denote a number in $(-1,1)$ that equals the frequency ratio $\nu/\la$. The letter $C$ will be used to denote various positive constants depending on $(M,g)$ and $\delta$, whose value could change from line to line.

\textit{Acknowledgments.}
The authors would like to thank Professor Sogge for his constant support. The second author want to thank Professor Greenleaf and Iosevich for their invaluable mentorship.


\section{Standard Reductions and Microlocal Decompositions }

Since we are taking $\epsilon$ to be in a compact subset of $E_\gamma$, we may assume there exists a small constant $\delta > 0$ such that
\begin{align}
	|\epsilon| &\leq 1 - \delta,
\end{align}
\begin{align}
	\label{curv hyp +} \left|\left\langle \frac{D}{dt} \gamma', \gamma'^\perp \right\rangle + \sqrt{1 - \epsilon^2} \bfk_\gamma(\gamma'^\perp) \right| &\geq \delta,  \qquad \text{ and } \\
	\label{curv hyp -} \left|\left\langle \frac{D}{dt} \gamma', \gamma'^\perp \right\rangle - \sqrt{1 - \epsilon^2} \bfk_\gamma(-\gamma'^\perp) \right| &\geq \delta
\end{align}

By using a partition of unity on $\R/|\gamma|\Z$ and the triangle inequality, we can obtain \eqref{periods} by showing
\begin{equation} \label{main2 bound}
	\left| \int b(t) e_\lambda(\gamma(t)) e^{-i\nu t} \, dt \right| \leq C (\log \la)^{-1/2}
\end{equation}
where $b$ is a smooth function on $\R$ with small support. To begin we assume the support of $b$ is contained in some unit interval in $\R$, though we may further restrict the support of $b$ as needed.

Let us choose a function $\rho\in {\mathcal S}(\R)$ satisfying
$$\rho(0)=1 \quad \text{and } \, \, \Hat \rho(\tau)=0 , \quad |\tau|\ge 1/4,$$
and for any $T>0$ define the Fourier multiplier operator $\rho(T(\la-\sqrt{-\Delta_g}))$ by the spectral theorem, i.e.
\[
\rho(T(\la - \sqrt{-\Delta_g})) = \sum_j \rho(T(\la - \la_j)) E_j
\]
where $E_j$ is the orthogonal projection operator onto the eigenspace spanned by $e_j$.
$\rho(T(\la - \sqrt{-\Delta_g}))$ reproduces eigenfunctions, in the sense that $\rho(T(\la-\sqrt{-\Delta_g}))e_\la=e_\la$. \eqref{main2 bound} will follow from the stronger\footnote{\eqref{main2 bound'} implies Theorem \ref{main} holds for $L^2$-normalized quasimodes which have spectral support on bands $[\la, \la + 1/\log \la]$ of length $1/\log \la$.} bound
\begin{equation} \label{main2 bound'} \tag{\ref{main2 bound}$'$}
\left|\, \int b(t) e^{-i\nu t} \rho(T(\lap g - \la))f(\gamma(t))\, dt \, \right|\le C \, (\log\la)^{-1/2}\, \|f\|_{L^2(M)},
\end{equation}
where
\begin{equation} \label{T = clog}
T = c\log \lambda
\end{equation}
for some sufficiently small $c$.

Choose Fermi local coordinates $x = (x_1,x_2)$ about $\gamma$, so that $x_1 \mapsto (x_1,0)$ parametrizes $\gamma$ and $x_2 \mapsto (x_1,x_2)$ are geodesics normal to $\gamma$. By construction,
\begin{equation}\label{fermi metric}
g = \begin{bmatrix} 1 & 0 \\ 0 & 1 \end{bmatrix} \qquad \text{ for } x_2 = 0.
\end{equation}
Let $B_1$ be a smooth function on $S^1$ taking values in the range $[0,1]$, and for which
\begin{align*}
B_1(\xi) &= 1 \qquad \text{ for } \xi_2 \geq \delta/2 \text{ and } \\
B_1(\xi) &= 0 \qquad \text{ for } \xi_2 \leq \delta/4
\end{align*}
where here $\xi = (\xi_1,\xi_2) \in S^1$. Set
\[
B_{-1}(\xi) = B_1(-\xi) \qquad \text{ and } \qquad B_0 = 1 - B_1 - B_{-1}
\]
(see Figure \ref{fig2}).
\begin{figure}
	\centering
	\includegraphics[width=.65\textwidth]{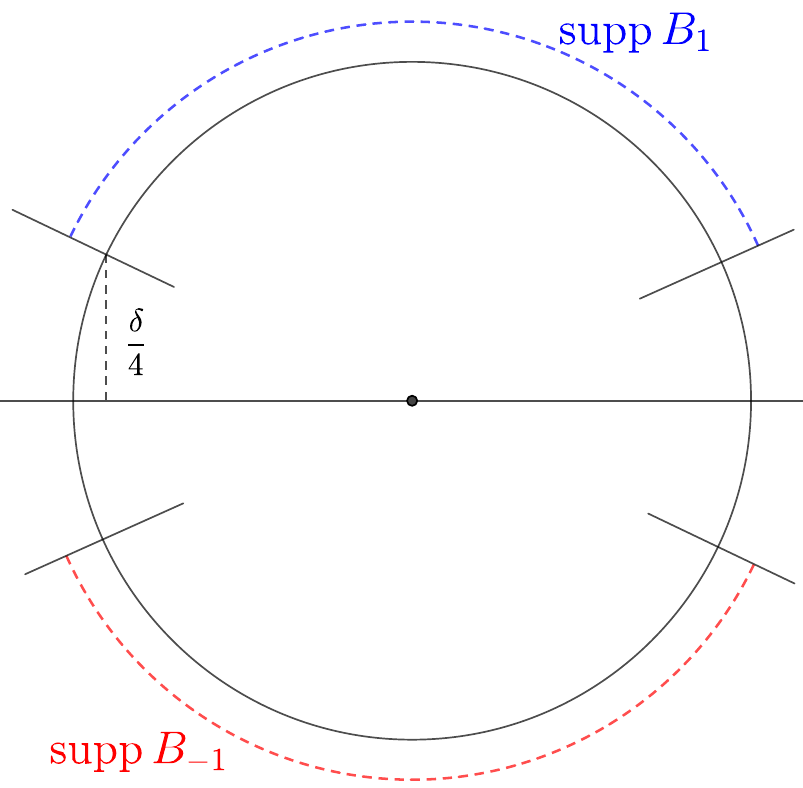}
	\caption{}
	\label{fig2}
\end{figure}
For $i = -1,0,1$, we define
\begin{equation} \label{def B}
B_i(x,y,\xi) = b_*(x) \beta(|x - y|) B_i(\xi/|\xi|)  \Upsilon_{c_1}(|\xi|/\la)
\end{equation}
where  $x$ and $y$ are expressed in our Fermi local coordinates,
and we have taken $b_*$ to be a smooth function supported on a neighborhood of $\gamma$ such that $b_*(\gamma(s))=b(s)$; $\beta \in C^\infty(\R,[0,1])$ with $\beta \equiv 1$ on a neighborhood of $0$ and $\supp \beta$ small; given constant $0<c_1<1$, $\Upsilon_{c_1}\in C^\infty(\mathbb R)$ satisfies
\begin{equation}
\Upsilon_{c_1}(r)=1, \quad r\in[c_1,c_1^{-1}],\quad \Upsilon_{c_1}(r)=0, \quad r\not\in[c_1/2,2c_1^{-1}]
\end{equation}
We associate operators\footnote{The function $B_i(x,y,\xi)$ will serve the same purpose as a zero-order symbol in $\xi$, and their respective operators will similarly function like zero-order classical pseudodifferential operators. It is important to distinguish these objects from symbols and pseudodifferential operators due to the presence of the cutoff away from $|\xi| = \lambda$, even though they play the same roles.} with $B_i$ by
\[
B_if(x) = \frac{1}{(2\pi)^2} \int_{\R^2} \int_{\R^2} e^{i\langle x - y , \xi \rangle} B_i(x,y,\xi) f(y) \, dy \, d\xi
\]
in our local coordinates. First, note that $B_i$ is a bounded operator on $L^\infty$ with norm
\begin{equation} \label{B_i norm}
\| B_i \|_{L^\infty\rightarrow L^\infty} = O(\lambda^2)
\end{equation}
for $i = -1,0,1$.
Secondly, note
\[
B_1 + B_{-1} + B_0 = 1-B_\#,
\]
where $B_\#$ is a psedudodifferential with symbol supported away from the set $\{(x,y,\xi):||\xi|/\la\in[c_1,c_1^{-1}]\}$. As in \cite[Page 141]{SFIO}, one can then use a parametrix for the half-wave operator to see that if $c_1$ is small enough, $T=c\log\la$, we have
\begin{equation} \label{Bsharp}
\|B_\#\circ \rho(T(\lap g - \la))\|_{L^2\rightarrow L^\infty}\le C_N\la^{-N}, \quad \text{for any }N\in\mathbb N,
\end{equation}
Indeed, using the H\"ormander parametrix for the half wave operator, we can see that the highest order term of the kernel associated to the above operator is 
\begin{multline*}
K(T,\la;x,y) =\iiiint e^{i(\varphi(z,y,\xi)  +\langle x - z ,\zeta \rangle  - \tau( p(z,\xi) - \la ))} \\ a(T, \lambda; \tau, z, y, \xi)B_{\#}(x,z,\zeta) \,
dz\,d\xi \, d\zeta\,d\tau.
\end{multline*}
where $p(y,\xi) $
is the principal symbol of $\lap g$; $\varphi\in C^\infty(\R^n\setminus\{0\})$ is homogeneous of degree $1$ in $\xi$, and satisfies
\begin{equation} \label{local varphi}
|\partial_{\xi}^\alpha (\varphi(x,y,\xi) - \langle x - y, \xi \rangle)| \leq C_\alpha |x - y|^2|\xi|^{1 - |\alpha|},
\end{equation}
for multiindices $\alpha \geq 0$ and for $x$ and $y$ sufficiently close;
the symbol $a$ behaves like a zero order symbol in $\xi$. For a more detailed description of the  H\"ormander paramterix, see section 3 or ~\cite{SFIO}.

Notice that $p(y,\xi)\sim|\xi|$, then if $|\xi|\not\in[c\la,c^{-1}\la]$ for some suitable constant $c$ depending on the metric, we can integrate by parts in $\tau$ to see that
\[\left|\int e^{i \tau( p(y,\xi) - \la )} a(T, \lambda; \tau, z, y, \xi) \,d\tau\right|\le C_N(|\xi|+\la)^{-N},\]
thus the difference between $K(T,\la;x,y)$ and 
\begin{multline*}
\widetilde K(T,\la;x,y) =\iint e^{i(\varphi(z,y,\xi)  +\langle x - z ,\zeta \rangle  - \tau( p(y,\xi) - \la ))}\\  \Upsilon_{c}(|\xi|/\la) q(T, \lambda; \tau, z, y, \xi)B_{\#}(x,z,\zeta) \,
dz\,d\xi \, d\zeta\,d\tau,
\end{multline*}
satisfies the required bound if $c_1$ is sufficiently small. It is then clear $\widetilde K$ gives an operator satisfying bounds in \eqref{Bsharp} if we integrate by parts in $z$ a few times. Therefore, by \eqref{Bsharp}, we have
\begin{equation} 
\left|\, \int b(t) e^{-i\nu t} B_\# \rho(T(\lap g - \la))f(\gamma(t))\, dt \, \right|\le C_N \, \la^{-N} \|f\|_{L^2(M)},\quad \text{for any }N\in\mathbb N.
\end{equation}

To prove \eqref{main2 bound'}, it suffices to show that
\begin{equation} \label{main2 bound''} \tag{\ref{main2 bound}$''$}
\left|\, \int b(t) e^{-i\nu t} B_i \rho(T(\lap g - \la))f(\gamma(t))\, dt \, \right|\le C \, (\log\la)^{-1/2}\, \|f\|_{L^2(M)},
\end{equation}
for each $i = -1,0,1$.
To set up the proof of \eqref{main2 bound''} we first note that the kernel of the operator thereof is given by
$$
B_i\rho(T(\lap g - \la))(x,y)=\sum_j \rho(T(\la_j-\la)) B_ie_j(x)\overline{e_j(y)}.
$$
Hence, by the Cauchy-Schwarz inequality, we would have \eqref{main2 bound'} if we could show that
$$
\int_M\left|\, \int \,e^{-i\nu t} \sum_j \rho(T(\la_j-\la)) \, B_i e_j(\gamma(t)) \overline{e_j(y)} \, dt \, \right|^2 \, dV_g(y)
\le C(\log\la)^{-1},
$$
By orthogonality, if $\chi(\tau)=|\rho(\tau)|^2$, this is equivalent to showing that
\begin{equation}\label{2.2}
\left|\, \iint e^{i\nu(s-t)}\sum_j \chi(T(\la_j-\la)) B_i e_j(\gamma(t)) \overline{B_i e_j(\gamma(s))} \, dt ds\, \right|
\le C(\log \la)^{-1}.
\end{equation}
By Fourier inversion,
\begin{align*}
\sum_j \chi(T(\la_j-\la))B_ie_j(x)\overline{B_ie_j(y)} &= \frac{1}{2\pi T} \sum_j \int \hat \chi(\tau/T) e^{-i\tau \la} e^{i\tau \la_j} B_ie_j(x)\overline{B_ie_j(y)} \, d\tau \\
&= \frac1{2\pi T} \int \hat \chi(\tau/T) e^{-i\tau \la} 
\bigl(B_i e^{i\tau \sqrt{-\Delta_g}} B_i^*\bigr)(x,y) \, d\tau,
\end{align*}
and \eqref{2.2} is equivalent to
\begin{equation} \label{2.3}
\frac{1}{T} \left| \iiint \hat \chi(\tau/T) e^{i\nu(s-t)} e^{-i\tau \lambda} (B_i e^{i\tau \lap g} B_i^*)(\gamma(t), \gamma(s)) \, ds \, dt \, d\tau \right| \leq C (\log \la)^{-1}.
\end{equation}

We cut the integral into $\beta(\tau)$ and $(1 - \beta(\tau))$ components where, as before, $\beta \in \C_0^\infty(\R,[0,1])$ is a bump function with $\beta(\tau) = 1$ for $|\tau| \leq 1$ and $\beta(\tau) = 0$ for $|\tau| \geq 2$. We bound the $\beta(\tau)$ component first. 

\begin{proposition}\label{beta component}
	\begin{equation}\label{local term}
	\frac{1}{T} \left| \iiint \beta(\tau) \hat \chi(\tau/T) e^{i\nu(s-t)} e^{-i\tau \lambda} (B_i e^{i\tau \lap g} B_i^*)(\gamma(t), \gamma(s)) \, ds \, dt \, d\tau \right| \leq CT^{-1}.
	\end{equation}
	for $i = -1,0,1$.
\end{proposition}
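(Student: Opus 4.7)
The plan is to rewrite the integral in \eqref{local term} as a four-fold oscillatory integral in $(s, t, \tau, \xi)$ with large parameter $\lambda$, then show that after extracting the $T^{-1}$ prefactor the remaining integral is $O(1)$ uniformly in $T$. First I would apply the H\"ormander parametrix for $e^{i\tau\sqrt{-\Delta_g}}$, valid uniformly for $|\tau| \le 2$ (the support of $\beta$), and compose it with the oscillatory representations of $B_i$ and $B_i^*$. Modulo an $O(\lambda^{-N})$ error this yields
\[
(B_i e^{i\tau \sqrt{-\Delta_g}} B_i^*)(x,y) = \int e^{i(\varphi(x,y,\xi) + \tau p(y,\xi))}\, a(\tau, \lambda;x,y,\xi)\, d\xi,
\]
where $\varphi$ satisfies \eqref{local varphi}, $p$ is the principal symbol of $\sqrt{-\Delta_g}$, and $a$ is a zero-order amplitude inheriting the cutoffs from $B_i$: namely $|\xi|/\lambda \in [c_1/2, 2/c_1]$ and $\xi/|\xi|$ in the angular support of $B_i$. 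Substituting reduces the task to bounding a four-fold oscillatory integral with phase $\Psi(s,t,\tau,\xi) = \nu(s-t) - \tau\lambda + \varphi(\gamma(t), \gamma(s), \xi) + \tau p(\gamma(s), \xi)$ uniformly in $T$.

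Next I would perform the $\tau$-integration first. Since $\Psi$ is linear in $\tau$ with $\partial_\tau \Psi = p(\gamma(s),\xi) - \lambda$, and $\beta(\tau)\hat\chi(\tau/T)$ is a smooth bump on $[-2,2]$ whose $\tau$-derivatives are bounded uniformly in $T$, Fourier analysis gives
\[
\int \beta(\tau)\hat\chi(\tau/T)\, e^{i\tau(p(\gamma(s),\xi) - \lambda)}\, d\tau = O_N\bigl((1+|p(\gamma(s),\xi) - \lambda|)^{-N}\bigr),
\]
concentrating the remaining $\xi$-integration near the characteristic set $p(\gamma(s),\xi) \approx \lambda$, i.e., $|\xi| \approx \lambda$. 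I would then apply the stationary phase technique of \cite{HI} (recalled in \S 3) in the variables $(s,t)$. Writing $\xi = (\xi_1, \xi_2)$ in Fermi coordinates with $\xi_1$ tangential to $\gamma$, the leading-order critical-point equations force $\xi_1 \approx \nu$, yielding rapid decay in $|\xi_1 - \nu|$. The residual $\xi$-integration is therefore concentrated near $\xi_1 \approx \nu$, $|\xi| \approx \lambda$, hence near $\xi_2 \approx \pm\sqrt{\lambda^2 - \nu^2}$.

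The argument concludes by a case analysis on the cutoff $B_i$. In the tangential case $i=0$, the angular support $|\xi_2|/|\xi| \le \delta/2$ is incompatible with the critical-point condition $|\xi_2|/|\xi| \approx \sqrt{1-\epsilon^2} \ge \sqrt{2\delta - \delta^2}$ (using $|\epsilon| \le 1-\delta$), so this piece is $O(\lambda^{-N})$. In the transversal cases $i = \pm 1$, the two critical points $\xi_2 \approx \pm\sqrt{\lambda^2 - \nu^2}$ lie in the support of $B_{\pm 1}$, and the residual $\xi$-integration runs over a set of volume $O(1/\sqrt{1-\epsilon^2})$ with bounded amplitude, giving the uniform $O(1)$ bound. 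Once the four-fold oscillatory integral has been shown to be $O(1)$, the outer $T^{-1}$ prefactor produces the claimed bound. The main obstacle is carrying out the stationary phase analysis rigorously for the curved phase $\varphi$ rather than the flat phase $\langle x-y, \xi\rangle$, and justifying that the $\tau$- and $(s,t)$-stationary phase analyses can be combined without the constraint $p(\gamma(s),\xi) \approx \lambda$ perturbing the $(s,t)$ critical-point structure; this is precisely the setting handled by the technique from \cite{HI}.
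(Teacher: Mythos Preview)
Your overall framework---H\"ormander parametrix, reduction to an oscillatory integral, stationary phase---matches the paper's. The paper also first eliminates the auxiliary variables $(w,z,\eta,\zeta)$ coming from the $B_i$, $B_i^*$ compositions (via Lemma~\ref{st phase}), which is what you are doing implicitly when you write the composed kernel as a single $\xi$-integral. Where the two arguments diverge is in the second stationary phase step: the paper applies Lemma~\ref{st phase} \emph{jointly} in the four variables $(\tau,s,\xi_1,\xi_2)$, keeping $t$ as a parameter, and checks that the resulting $4\times 4$ Hessian has determinant $\sim 1-\epsilon^2$, bounded away from zero. Crucially, this determinant is independent of the unknown entry $\partial_s^2\Psi$, because the nondegeneracy is carried by the off-diagonal $\tau$--$\xi$ and $s$--$\xi$ blocks.

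Your staged approach---$\tau$ first, then $(s,t)$, then count $\xi$-volume---runs into a genuine obstruction at the $(s,t)$ step. The $(s,t)$-Hessian of $\nu(s-t)+\varphi(\gamma(t),\gamma(s),\xi)$ is degenerate: in the flat model $\varphi=\langle x-y,\xi\rangle$, $\gamma(t)=(t,0)$, the phase is $(t-s)(\xi_1-\nu)$, which is \emph{linear} in $(s,t)$ and has identically vanishing Hessian, so Lemma~\ref{st phase} does not apply. Nor does nonstationary phase in $(s,t)$ straightforwardly yield decay in $|\xi_1-\nu|$: the $t$-gradient is $-\nu+\xi_1+O(|t-s||\xi|)$, and since $|\xi|\sim\lambda$ and $|t-s|$ ranges over an interval of fixed length, the error term $O(|t-s|\lambda)$ can swamp $|\xi_1-\nu|$. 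Thus the localization ``$\xi_1\approx\nu$'' you need does not follow from the $(s,t)$-integration alone. The fix is exactly what the paper does: include $\xi$ (and $\tau$) among the stationary-phase variables so that the $\partial_s\partial_{\xi_1}$ and $\partial_\tau\partial_{\xi_2}$ cross-terms restore nondegeneracy. Your closing remark correctly flags that combining the analyses is the main obstacle, but the specific issue is this Hessian degeneracy, and it is not resolved by the technique of \cite{HI} applied only in $(s,t)$.
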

The proof of Proposition \ref{beta component} is quite standard yet a bit involved. We will give a detailed proof in the next section with the help of a stationary phase lemma from \cite{HI}.

What remains is to show
\begin{align}
\nonumber \frac{1}{T} \left| \iiint (1 - \beta(\tau)) \hat \chi(\tau/T) e^{i\nu(s-t)} e^{-i\tau \lambda} (B_i e^{i\tau \lap g} B_i^*)(\gamma(t), \gamma(s)) \, ds \, dt \, d\tau \right| &\\
\label{precosine} &\hspace{-4em} \leq C (\log \la)^{-1}.
\end{align}
To deal with \eqref{precosine}, we lift to the universal cover.
Before this, however, we want to replace the operator $e^{i\tau \lap g}$ with $\cos(\tau \lap g)$ so we can make use of H\"uygen's principle. By Euler's formula,
\[
e^{i\tau \lap g} = 2\cos(\tau \lap g) - e^{-i\tau \lap g},
\]
and so the integral in \eqref{precosine} is
\begin{align*}
&\frac{2}{T} \iiint (1 - \beta(\tau)) \hat \chi(\tau/T) e^{i\nu(s-t)} e^{-i\tau \lambda} (B_i \cos(\tau \lap g) B_i^*)(\gamma(t), \gamma(s)) \, ds \, dt \, d\tau \\
&+ \frac{1}{T} \iiint (1 - \beta(\tau)) \hat \chi(\tau/T) e^{i\nu(s-t)} e^{-i\tau \lambda} (B_i e^{-i\tau \lap g} B_i^*)(\gamma(t), \gamma(s)) \, ds \, dt \, d\tau.
\end{align*}
By reversing our reduction, the second line is
\begin{equation}\label{+term}
\frac{1}{T} \sum_j X_T(-(\la_j + \la)) \left| \int e^{-i\nu t} B_i e_j(\gamma(t)) \, dt \right|^2
\end{equation}
where $\hat X_T(\tau) = (1 - \beta(\tau)) \hat \chi(\tau/T)$.
We claim \eqref{+term} is $O(\lambda^{-N})$ uniformly for $T \geq 1$ for each $N = 1,2,\ldots$. Indeed, \eqref{B_i norm} and the general sup-norm estimates
\[
\|e_j\|_{L^\infty(M)} \leq C \lambda_j^{1/2}
\]
(see for example ~\cite{SFIO}) imply the integral in \eqref{+term} is bounded by $C\lambda_j^{1/2}\lambda^{2}$. The $O(\lambda^{-N})$ bound follows since $X_T(-(\lambda_j + \lambda))$ is rapidly decaying in $\la_j + \la$.
It now suffices to show
\begin{align}
\nonumber	\frac{1}{T} \left| \iiint (1 - \beta(\tau)) \hat \chi(\tau/T) e^{i\nu(s-t)} e^{-i\tau \lambda} (B_i \cos(\tau \lap g) B_i^*)(\gamma(t), \gamma(s)) \, ds \, dt \, d\tau \right|&\\
\label{2.4}	 &\hspace{-5em} \leq C (\log \la)^{-1}.
\end{align}

Here $\bigl(\cos \tau\sqrt{-\Delta_g}\bigr)(x,y)$ is the wave kernel for the map $$C^\infty(M)\ni f\to u\in C^\infty(\R\times M)$$ solving the Cauchy problem
with initial data $(f,0)$, i.e.,
\begin{equation}\label{Cauchy}\bigl(\partial_\tau^2-\Delta_g\bigr)u=0, \quad u(0, \, \cdot \, )=f, \quad \partial_\tau u(0, \, \cdot \, )=0.\end{equation}

To be able to compute the integral in \eqref{2.4} we need to relate the wave kernel on $M$ to the corresponding wave kernel on the universal cover of $M$. By the Cartan-Hadamard Theorem  (see e.g. \cite[Chapter 7]{doCarmo}), we can lift the calculations up to the universal cover $(\mathbb{R}^2,\tilde g)$ of $(M,g)$.

Let $\Gamma$ denote the group of deck transformations preserving the associated covering map ${\mathbb{R}^2} \to M$ coming from the exponential map about the point $\gamma(0)$. The metric $\tilde g$ is the pullback of $g$ via the covering map. We shall measure the distances in $(\mathbb{R}^2,\tilde g)$ using its Riemannian distance function $d_{\tilde g}({}\cdot{},{}\cdot{})$. We choose a Dirichlet fundamental domain, $D \simeq M$, centered at the lift $\tilde \gamma(0)$ of $\gamma(0)$, which has the property that $\Rt$ is the disjoint union of the
$\alpha(D)$ as $\alpha$ ranges over $\Gamma$ and $\{\tilde y\in \Rt: \, \dgt(0,\tilde y)<10\} \subset D$ since we are assuming that $\text{Inj}\,M\ge10$.   It then follows that we can identify every point $x\in M$ with the unique point $\tilde x\in D$ having the property
that $\tilde x \mapsto x$ through the covering map.  Let also $\{\tilde \gamma(t)\in\mathbb R^2: |t|\le \tfrac12\}$ similarly denote the set of points in $D$ corresponding to our segment
$\gamma$ in $M$. Derivatives of $\gamma$ correspond to push-forwards of the corresponding derivatives of $\tilde \gamma$ through the covering map. In particular, $\tilde \gamma$ is a unit-speed curve with the same geodesic curvature as $\gamma$.
Moreover, if $p \in M$ and $v \in S_pM$, and $\tilde p$ and $\tilde v$ are their respective lifts to $D$ and $T_{\tilde p}D$, then $\tilde \bfk_{\tilde p}(\tilde v)$ as defined on the universal cover coincides with $\bfk_p(v)$. Hence, the hypotheses \eqref{curv hyp +} and \eqref{curv hyp -} correspond exactly to
\begin{align*}
\left| \left\langle \frac{D}{dt} \tilde \gamma', \tilde \gamma'^\perp \right\rangle + \sqrt{1 - \epsilon^2} \tilde \bfk_{\tgamma}(\tgamma'^\perp) \right| &\geq \delta, \qquad \text{ and } \\
\left| \left\langle \frac{D}{dt} \tgamma', \tgamma'^\perp \right\rangle - \sqrt{1 - \epsilon^2} \tilde \bfk_{\tgamma} (-\tgamma'^\perp) \right| &\geq \delta.
\end{align*}
Finally, if $\Delta_{\tg}$ denotes the Laplace-Beltrami operator associated to $\tg$, then, since solutions of the  Cauchy problem \eqref{Cauchy} for $(M,g)$ correspond exactly
to  $\Gamma$-invariant solutions of the corresponding Cauchy problem associated to the lifted wave operator $\partial^2_t-\Delta_{\tg}$, we have the following
Poisson formula relating the wave kernel on $(M,g)$ to the one for the universal cover $(\Rt,\tg)$:
\begin{equation}\label{2.10}
\bigl(\cos \tau \sqrtg \big)(x,y)=\sum_{\alpha\in \Gamma}\bigl(\cos \tau \sqrtd  \bigr)(\tilde x, \alpha(\tilde y)).
\end{equation}
Then,
\begin{align*}
&B_i \cos(\tau \lap g) B_i^*(x,y) \\
&= \frac{1}{(2\pi)^4} \iiiint e^{i\langle x - w, \eta \rangle } B_i(x,w,\eta) \cos(\tau \lap g)(w,z) e^{i\langle z - y, \zeta \rangle} \overline{B_i(z,y,\zeta)}\\
&\hspace{28em} \, dw \, dz \, d\eta \, d\zeta \\
&= \frac{1}{(2\pi)^4} \sum_{\alpha \in \Gamma} \iiiint e^{i\langle \tilde x - \tilde w, \eta \rangle } \widetilde B_i(\tilde x,\tilde w,\eta) \cos(\tau \lap \tg)(\tilde w,\alpha (\tilde z)) e^{i\langle \tilde z - \tilde y, \zeta \rangle} \overline{ \widetilde B_i(\tilde z,\tilde y,\zeta)}\\
&\hspace{29em} \, d\tilde w \, d\tilde z \, d\eta \, d\zeta
\end{align*}
where
\[
\widetilde B_i(\tilde x, \tilde y,\xi) =
\begin{cases}
B_i(x,y,\xi), & {\text{if }}{\tilde x, \tilde y \in D,} \\
0, & \text{otherwise,}
\end{cases}
\]
whereafter we write
\[
B_i \cos(\tau \lap g) B_i^*(x,y) = \sum_{\alpha \in \Gamma} \widetilde B_i \cos(\tau, \lap \tg)  \widetilde B_{i,\alpha}^{*}(\tilde x, \tilde y),
\]
where, in terms of the kernel, $$\widetilde B_{i,\alpha}(\tilde x,\tilde y)=\widetilde B_i(\alpha^{-1}(\tilde x),\tilde y).$$ 
\eqref{2.4} will follow from
\begin{equation} \label{lifted bound}
\frac{1}{T} \sum_{\alpha \in \Gamma} \left| \iint e^{i\nu(s - t)} K_{i,\alpha} (T,\lambda;\tilde \gamma(t), \tilde \gamma(s)) \, ds \, dt \right| \leq C (\log \lambda)^{-1}
\end{equation}
where
\begin{equation} \label{kernel def}
K_{i,\alpha}(T,\lambda;x,y) = \int {(1 - \beta(\tau))} \hat \chi(\tau/T) e^{-i\tau \la} \widetilde B_i \cos(\tau \lap \tg) \widetilde B_{i,\alpha}^* (x,y) \, d\tau.
\end{equation}

The bound \eqref{lifted bound}, and hence Theorem \ref{main}, follows from the propositions below and \eqref{T = clog} for some sufficiently small $c$. The first proposition treats the identity term in the sum for $i = -1,0,$ and $1$.

\begin{proposition} \label{small time} If $\alpha=I$,  the identity deck transformation, we have
	\begin{equation}\label{2.17}
	\frac{1}{T} \left| \iint e^{i\nu(s - t)} K_{i,\alpha}(T,\lambda;\tilde \gamma(t), \tilde \gamma(s)) \, ds \, dt \right| \leq C T^{-1}.
	\end{equation}
	for $i = -1,0,1$.
\end{proposition}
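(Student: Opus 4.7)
The plan is to prove Proposition \ref{small time} by adapting the local-in-time stationary-phase strategy behind Proposition \ref{beta component} to the universal cover. Since $(\mathbb R^2,\tg)$ is Cartan--Hadamard it has no conjugate points, so the H\"ormander parametrix for $\cos(\tau\sqrtd)$ remains valid throughout the $\tau$-support of $(1-\beta(\tau))\hat\chi(\tau/T)$. First I would write
\[
\cos(\tau\sqrtd)(x,y) = \frac{1}{2(2\pi)^2}\sum_{\pm}\int e^{i\varphi(x,y,\xi)\mp i\tau p(y,\xi)} a_\pm(\tau,y,\xi)\, d\xi
\]
modulo a smoothing error, and then compose with $\widetilde B_i$ and $\widetilde B_{i,I}^* = \widetilde B_i^*$, using the standard amplitude/phase composition rules to absorb the cutoffs $\Upsilon_{c_1}(|\xi|/\lambda)$ and $B_i(\xi/|\xi|)$ into a single amplitude $\widetilde a(T,\lambda;x,y,\xi)$ that inherits the symbol estimates of $B_i$.

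Next, I would combine the $\tau$-dependence of the parametrix with $e^{-i\tau\lambda}(1-\beta(\tau))\hat\chi(\tau/T)$. The ``$-$'' branch has phase derivative $-(\lambda + p(y,\xi))\lesssim -\lambda$ in $\tau$, so $N$ integrations by parts yield an $O(\lambda^{-N})$ contribution that is easily absorbed. For the ``$+$'' branch, I would replace $(1-\beta(\tau))\hat\chi(\tau/T)$ by $\hat\chi(\tau/T)$ at the cost of a compactly supported correction whose contribution, after insertion into the $s,t$-integral and the $1/T$ prefactor, is $O(T^{-1})$. What remains is an oscillatory integral
\[
\frac{1}{T}\iiiint e^{i\Phi}\, \widetilde a\, ds\, dt\, d\xi\, d\tau, \qquad \Phi = \nu(s-t)+\varphi(\gamma(t),\gamma(s),\xi)-\tau(\lambda-p(\gamma(s),\xi)).
\]

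The final step is a critical-point analysis in $(s,t,\xi,\tau)$. In the Fermi coordinates along $\gamma$, $\varphi(\gamma(t),\gamma(s),\xi)\approx (t-s)\xi_1$ for small $|t-s|$, so the critical set is determined by $\xi_1=\nu$ and $p(\gamma(s),\xi)=\lambda$, giving $\xi_2=\pm\lambda\sqrt{1-\epsilon^2}$. For $i=0$, $\supp B_0\subset\{|\xi_2|/|\xi|\le\delta/2\}$, and since the hypothesis $|\epsilon|\le 1-\delta$ forces $\sqrt{1-\epsilon^2}>\delta/2$ when $\delta$ is small, the critical directions lie outside $\supp B_0$; repeated integration by parts in $t$, using $|\partial_t\Phi|\gtrsim\lambda$ on the support of the amplitude, then yields an $O(\lambda^{-N})$ bound, much better than $CT^{-1}$. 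For $i=\pm 1$, the critical set is nondegenerate, and a standard stationary-phase computation in $(s,t,\xi,\tau)$ produces an integrand of size $O(1)$, which combined with the $1/T$ prefactor yields the required $CT^{-1}$ bound.

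The main obstacle is bookkeeping: one must carefully verify that the $\lambda$-weights coming from the parametrix amplitudes, the $d\xi$-measure, and the Jacobians in stationary phase all balance out to an $O(1)$ contribution. This is tedious but standard, and essentially mirrors the corresponding computations in \cite{Gauss} and \cite{GP}.
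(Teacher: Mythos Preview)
Your proposal misses the point and overcomplicates the argument. The paper's proof of Proposition~\ref{small time} is essentially one line: since $b$ has support in a unit interval, for $\alpha = I$ the points $\tgamma(t),\tgamma(s)$ satisfy $d_{\tg}(\tgamma(t),\tgamma(s))\le 1$, and then \eqref{kernel6} from Proposition~\ref{prop5.1} gives $K(T,\lambda;\tgamma(t),\tgamma(s))=O(\lambda^{-5})$. Conjugating by $\widetilde B_i$ costs at most $O(\lambda^4)$ by \eqref{B_i norm}, so $K_{i,I}(T,\lambda;\tgamma(t),\tgamma(s))=O(\lambda^{-1})$ and the double integral is $O(\lambda^{-1})$, which is far stronger than the stated $O(1)$ bound. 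The whole content is that the $(1-\beta(\tau))$ cutoff excises the stationary point $|\tau|=d_{\tg}(x,y)$ of the Hadamard-parametrix phase, and this is already packaged into \eqref{kernel6}.

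Your route has a genuine gap. The H\"ormander parametrix with phase $\varphi(x,y,\xi)\mp\tau p(y,\xi)$ and amplitude supported near $x=y$ is a \emph{small-time} construction; the paper itself only invokes it for $\tau\in\supp\beta$. Absence of conjugate points on $(\R^2,\tg)$ guarantees a global parametrix, but that is the Hadamard parametrix built from $d_{\tg}(x,y)$ (precisely what underlies Proposition~\ref{prop5.1}), not the local phase $\varphi\approx\langle x-y,\xi\rangle$ you wrote down. So your displayed oscillatory integral in $(s,t,\xi,\tau)$ with $|\tau|$ ranging up to $T/2=\tfrac{c}{2}\log\lambda$ is not justified as written. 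Moreover, even formally, the phase is linear in $\tau$, so ``stationary phase in $(s,t,\xi,\tau)$'' degenerates in the $\tau$-direction; what the $\tau$-integral actually produces is $T\chi(T(\lambda-p))$, and tracking the resulting thin-shell localization correctly is more delicate than you suggest. Once you try to make this rigorous you are forced back to the Hadamard representation and to the observation behind \eqref{kernel6}, at which point the detour through Proposition~\ref{beta component}'s machinery is unnecessary.
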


The next proposition treats the remaining terms for the $i = 0$ case.

\begin{proposition} \label{nonsmall time}
	\begin{equation}\label{2.18}
	\frac{1}{T} \sum_{\alpha \in \Gamma \setminus \{I\}} \left| \iint e^{i\nu(s-t)} K_{0,\alpha}(T,\lambda; \tilde \gamma(t), \tilde \gamma(s)) \, ds \, dt \right| \leq Ce^{C'T} \lambda^{-1/2}.
	\end{equation}
\end{proposition}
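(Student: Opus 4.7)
My plan is to realize each $K_{0,\alpha}(T,\lambda;x,y)$ as an explicit oscillatory integral by inserting the Hadamard parametrix for $\cos(\tau\sqrtd)$ on the universal cover $(\mathbb{R}^2,\tg)$ into \eqref{kernel def}, composed with the Fourier integral representations of $\widetilde B_0$ and $\widetilde B_{0,\alpha}^*$. The amplitude is compactly supported in $\tau$ by $\hat\chi(\tau/T)(1-\beta(\tau))$, which restricts $1\le|\tau|\le T/2$, and by Huygens' principle the integrand vanishes unless $\dgt(\tilde w,\alpha(\tilde z))\le|\tau|$. Together with the $\beta(|x-y|)$ cutoffs built into $B_0$, this forces $\dgt(\tilde\gamma(0),\alpha(\tilde\gamma(0)))\le T+O(1)$. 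By Cartan--Hadamard, the universal cover of the nonpositively curved $(M,g)$ has at most exponential volume growth, so the number of contributing deck transformations is at most $Ce^{C_0 T}$.

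The heart of the argument is then a uniform individual bound
\[
|I_\alpha| := \left|\iint e^{i\nu(s-t)}K_{0,\alpha}(T,\lambda;\tilde\gamma(t),\tilde\gamma(s))\,ds\,dt\right|\le C\lambda^{-1/2},
\]
after which the exponential count immediately yields $(1/T)\sum_{\alpha\ne I}|I_\alpha|\le Ce^{C_0T}\lambda^{-1/2}$, which is the claimed bound (absorbing $T$ into $e^{C'T}$ for a slightly larger $C'$). To obtain this, I would follow the tangential-kernel strategy of \cite{Gauss}: after substituting the parametrix, integrate out the spatial variables $\tilde w,\tilde z$ by stationary phase, use the $\Upsilon_{c_1}$ cutoff to localize the frequency magnitudes $|\xi|,|\zeta|\sim\lambda$, and reduce $I_\alpha$ to an oscillatory integral in $(s,t,\tau,\omega_1,\omega_2)$ with $\omega_i\in S^1$. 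The effective phase has schematic form
\[
\lambda\bigl[\,-\tau + \Psi_\alpha(t,s,\omega_1,\omega_2) + \epsilon(s-t)\bigr],\qquad \epsilon=\nu/\lambda,
\]
where the $t$- and $s$-derivatives of $\Psi_\alpha$ to leading order coincide with the tangential components of $\omega_1$ and the pushforward of $\omega_2$ by $\alpha$, respectively. Since $B_0$ restricts these tangential components to have absolute value at least $\sqrt{1-\delta^2/4}>1-\delta\ge|\epsilon|$, the phase is genuinely non-stationary in both $t$ and $s$; repeated integration by parts in these variables, combined with a standard stationary-phase computation in the remaining three variables $(\tau,\omega_1,\omega_2)$, yields the required $\lambda^{-1/2}$ (after balancing the $\lambda^{2}$ volume factor from the frequency integrations).

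The main obstacle I anticipate is uniformity in $\alpha$: the separation $\dgt(\tilde\gamma(t),\alpha(\tilde\gamma(s)))$ may be as large as $T\sim\log\lambda$, and the parametrix amplitude together with higher derivatives of the phase a priori depend on this separation. Nonpositive curvature, however, rules out conjugate points on $(\mathbb{R}^2,\tg)$, and standard Jacobi field comparison on such manifolds provides polynomial-in-distance control on the relevant amplitudes and phase derivatives, which is readily absorbed into the $e^{C_0 T}$ count. Combining the uniform per-term $\lambda^{-1/2}$ bound with the exponential deck-transformation count, and applying the $1/T$ prefactor in the statement, completes the proof.
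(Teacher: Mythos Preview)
Your outline captures the correct key idea: on the support of $B_0$ the frequency direction is nearly tangent to $\tilde\gamma$ (normal component at most $\delta/2$), so its tangential component exceeds $\sqrt{1-\delta^2/4}>1-\delta\ge|\epsilon|$, and the phase is non-stationary in $t$ and $s$. This is exactly the mechanism the paper exploits.

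The paper organizes the argument differently, and more cleanly. Rather than insert the Hadamard parametrix for $\cos(\tau\sqrtd)$ and carry all variables $(\tau,\tilde w,\tilde z,\eta,\zeta,\omega_1,\omega_2)$ simultaneously, it first invokes Proposition~\ref{prop5.1} (imported from \cite{Gauss}) to write the bare kernel as $\lambda^{1/2}\sum_\pm a_\pm(T,\lambda;x,y)e^{\pm i\lambda d_{\tilde g}(x,y)}$ modulo a rapidly decaying remainder, and then applies Lemma~\ref{st phase} to the $\widetilde B_0$--conjugation to arrive at the explicit formula \eqref{K_i'}. In that formula the $B_0$ factors are evaluated at the unit geodesic direction $\sigma'$ joining $\tilde\gamma(t)$ to $\alpha(\tilde\gamma(s))$, so the support constraint becomes a transparent geometric statement: the geodesic is forced to be nearly tangent to both curves, which is incompatible with the critical-point condition $\cos(\angle(\sigma,\tilde\gamma))=\mp\epsilon$. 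A single integration by parts in $s$ then yields the per-term bound $Ce^{CT}\lambda^{-1/2}$. Your route would recover the same objects but with more bookkeeping; the paper's modular use of Proposition~\ref{prop5.1} and \eqref{K_i'} is what you should aim for.

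Two small corrections. First, the per-term bound is not $C\lambda^{-1/2}$ uniformly in $\alpha$ but $Ce^{CT}\lambda^{-1/2}$: the amplitude derivatives in \eqref{kernel3} grow \emph{exponentially} in $d_{\tilde g}$, not polynomially, since on a nonpositively curved cover with curvature bounded below Jacobi fields can grow like $\cosh$. This is harmless because it is absorbed into the already-present exponential deck-transformation count. Second, your ``stationary phase in $(\tau,\omega_1,\omega_2)$'' step is doing the work that Proposition~\ref{prop5.1} and the $\widetilde B_0$--conjugation computation already package; it is cleaner to cite those rather than redo them.
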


Fix a constant $R \gg 1$ to be determined later and which is independent of $T$, $\lambda$, $\gamma$ and $\nu$ subject to the hypotheses of Theorem \ref{main}. We set
\begin{equation} \label{A def}
A = \{ \alpha \in \Gamma : d_{\tg}(\tilde \gamma, \alpha (\tilde \gamma)) \leq R \}.
\end{equation}
and treat the contributions of $A \setminus \{I\}$ and $\Gamma \setminus A$ to the sum in \eqref{lifted bound} separately.

\begin{proposition} \label{medium time}
	\begin{equation}\label{2.20}
	\frac{1}{T} \sum_{\alpha \in A \setminus \{I\}} \left| \iint e^{i\nu(s - t)} K_{i,\alpha}(T,\lambda;\tilde \gamma(t), \tilde \gamma(s)) \, ds \, dt \right| \leq CT^{-1}
	\end{equation}
	for $i = -1,1$.
\end{proposition}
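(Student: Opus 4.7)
The plan is to leverage the proper discontinuity of the $\Gamma$-action on $(\Rt,\tg)$ to observe that $A$ has cardinality bounded by a constant depending only on $R$ and $(M,g)$, since $\tgamma$ lies in a compact region and the fundamental domain $D$ has bounded diameter. Since the sum in \eqref{2.20} is therefore over $O(1)$ many terms, it suffices to prove, for each fixed $\alpha \in A \setminus \{I\}$ and $i = \pm 1$,
\[
	\left|\iint e^{i\nu(s-t)} K_{i,\alpha}(T,\lambda;\tgamma(t),\tgamma(s))\, ds\, dt\right| \le C,
\]
uniformly in $T$ and $\lambda$. The prefactor $1/T$ in \eqref{2.20} then yields the asserted $CT^{-1}$ bound.

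To obtain this uniform bound, I would expand $K_{i,\alpha}$ using the H\"ormander parametrix for $\cos(\tau\sqrtd)$ on the universal cover and unfold the definitions of $\widetilde B_i$ and $\widetilde B_{i,\alpha}^*$. This expresses $K_{i,\alpha}(T,\lambda;\tgamma(t),\tgamma(s))$ as a sum of two oscillatory integrals (corresponding to the $\pm$-branches of the cosine) whose amplitudes inherit from $B_i$ the microlocal restrictions $|\xi|, |\eta|, |\zeta| \sim \lambda$ and $|\xi_2|/|\xi| \ge \delta/4$, and whose phases are schematically of the form
\[
	\Phi = \nu(s - t) + \langle \tgamma(t) - w, \eta\rangle + \varphi(w, \alpha(z), \xi) \mp \tau|\xi|_{\tg} + \langle z - \tgamma(s), \zeta\rangle - \tau\lambda.
\]
Since $\dgt(\tgamma(t), \alpha(\tgamma(s))) \le R + O(1)$ whenever $s, t$ lie in the support of $b$, integration by parts in $\tau$ yields $O(\lambda^{-N})$ decay when $|\tau|$ is outside a bounded neighborhood of $[0, R + O(1)]$. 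On the remaining bounded $\tau$-range, the stationary phase argument underpinning Proposition \ref{beta component} (using the lemma of \cite{HI}) applies to the $(t,s,\tau)$ integral. The extra tangential phase $e^{i\nu(s-t)} = e^{i\lambda \epsilon(s-t)}$ with $|\epsilon| \le 1 - \delta$ merely shifts the critical set; non-degeneracy is preserved because the transversal microlocalization of $B_{\pm 1}$ guarantees that $\nu$ cannot absorb the tangential component of any momentum $\xi$ in the symbol's support. This produces an $O(1)$ bound with constants depending only on $R$, $\delta$, and $(M,g)$.

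The main technical obstacle is verifying non-degeneracy of the stationary phase uniformly in $\alpha \in A$, together with checking that the parametrix error terms remain harmless throughout the bounded-$\tau$ regime. This is resolved by the finiteness of $A$, which reduces the uniformity check to a finite list of cases, together with the fact that the geometric inputs (injectivity radius, distances on the universal cover up to scale $R$, and bounds on $\tg$ and its derivatives) depend only on $(M,g)$ and the fixed parameter $R$. Crucially, the curvature hypotheses \eqref{curv hyp +}--\eqref{curv hyp -} play no role here; they are reserved for the long-time regime $\Gamma \setminus A$ treated in \S 5.
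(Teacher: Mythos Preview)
Your overall strategy is sound: the set $A$ is finite with cardinality depending only on $R$ and $(M,g)$, so it suffices to bound each summand by a constant, and you are right that the curvature hypotheses \eqref{curv hyp +}--\eqref{curv hyp -} are not invoked here. However, there is a genuine gap at the heart of your argument.

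The problem is your justification of non-degeneracy. After reducing (via the Hadamard parametrix of Proposition~\ref{prop5.1} and the stationary phase step leading to \eqref{K_i'}) to the two-dimensional oscillatory integral with phase $\phi_\alpha(t,s)=\epsilon(t-s)+r_\alpha(t,s)$, critical points \emph{can} occur in the support of the amplitude for $i=\pm1$: they appear precisely when the geodesic from $\tgamma(t)$ to $\alpha(\tgamma(s))$ meets both curves at angle $\arccos\epsilon$, which is entirely compatible with the transversal cutoff $|\xi_2|/|\xi|\ge\delta/4$. Your sentence ``$\nu$ cannot absorb the tangential component of any momentum $\xi$ in the symbol's support'' is a first-derivative statement and does not address the Hessian at all; and the appeal to finiteness of $A$ only handles uniformity, not the existence of non-degeneracy for any single $\alpha$. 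Nothing in your outline rules out a degenerate critical point.

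The paper fills this gap with a short geometric sign argument rather than a full two-variable Hessian computation. Using the explicit formulas \eqref{partial_s^2} and \eqref{partial_t^2}, one checks that if $\nabla_{t,s}\phi_\alpha=0$ and $\partial_s^2\phi_\alpha=0$ at a point on the diagonal $t=s$, then the sign in \eqref{partial_s^2} is forced to be negative (since circle curvature is positive), which by the support condition \eqref{supp of a} forces the sign in \eqref{partial_t^2} to be positive, giving $\partial_t^2\phi_\alpha>0$. Thus the three open sets $\{\nabla\phi_\alpha\neq0\}$, $\{\partial_s^2\phi_\alpha\neq0\}$, $\{\partial_t^2\phi_\alpha\neq0\}$ cover the diagonal, and after a partition of unity one applies either nonstationary phase or one-variable stationary phase (gaining $\lambda^{-1/2}$ against the $\lambda^{1/2}$ prefactor). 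This is the step your proposal is missing.
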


\begin{proposition} \label{large time}
	\begin{equation}\label{2.21}
	\frac{1}{T} \sum_{\alpha \in \Gamma \setminus A} \left| \iint e^{i\nu(s - t)} K_{i,\alpha}(T,\lambda;\tilde \gamma(t), \tilde \gamma(s)) \, ds \, dt \right| \leq Ce^{C'T} \lambda^{-1/2}.
	\end{equation}
	for $i = -1,1$.
\end{proposition}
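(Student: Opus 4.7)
The plan is to combine three ingredients: a finite-propagation-speed reduction that cuts the sum over $\Gamma\setminus A$ down to polynomially many deck transformations, a parametrix representation of $\widetilde B_i \cos(\tau\sqrt{-\Delta_{\tg}}) \widetilde B_{i,\alpha}^*$ that converts the kernel into a concrete oscillatory integral, and a 2D stationary-phase argument in $(s,t)$ whose non-degeneracy is exactly controlled by the transversal hypotheses \eqref{curv hyp +}--\eqref{curv hyp -}. Specifically, since $\hat\chi(\tau/T)$ is supported in $|\tau| \leq T$ and the wave kernel on $(\R^2,\tg)$ is supported in the light cone, only those $\alpha \in \Gamma$ with $d_{\tg}(\tgamma,\alpha(\tgamma)) \leq 2T + O(1)$ contribute a non-negligible amount to $K_{i,\alpha}$. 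Because $(\R^2,\tg)$ is a Cartan--Hadamard surface with bounded sectional curvature, volume growth is exponential, and hence $\#\{\alpha \in \Gamma\setminus A : d_{\tg}(\tgamma,\alpha(\tgamma)) \leq 2T + O(1)\} \leq Ce^{C'T}$. Thus the desired bound reduces to showing that, uniformly in $\alpha$,
\[
\left| \iint e^{i\nu(s-t)} K_{i,\alpha}(T,\lambda;\tgamma(t),\tgamma(s))\, ds\, dt \right| \leq C \lambda^{-1/2}.
\]

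To get this pointwise-in-$\alpha$ bound, I would insert the H\"ormander parametrix for $\cos(\tau\sqrtd)$ into \eqref{kernel def}, giving an oscillatory integral whose phase is (up to the Fourier-localizing phases from $\widetilde B_i$ and $\widetilde B_{i,\alpha}^*$) $\tau \, p(\cd,\xi) - \tau \lambda$. Applying stationary phase in $\tau$, which is permitted because the cutoff $\Upsilon_{c_1}(|\xi|/\la)$ keeps $|\xi| \sim \la$, yields a contribution supported near $p(\cd,\xi) = \la$ with a factor of $T^{1/2} \la^{-1/2}$ or so, leaving an oscillatory integral in $\xi$ with phase homogeneous of degree one in $\xi$, amplitude of class zero, and the remaining microlocal constraint that $\xi/|\xi|$ lies in the support of $B_i(\xi/|\xi|)$. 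After rescaling $\xi = \la\omega$ with $\omega \in S^1$ and absorbing the $e^{i\nu(s-t)} = e^{i\la\epsilon(s-t)}$ factor, the $(s,t)$-integral takes the form
\[
\iint \int_{S^1} e^{i\la \Psi_\alpha(t,s,\omega)} a_\alpha(t,s,\omega)\, d\omega\, ds\, dt,
\]
where $\Psi_\alpha(t,s,\omega) = \epsilon(s-t) + \Phi_\alpha(t,s,\omega)$ and $\Phi_\alpha$ is the phase arising from the parametrix, which on the stationary set in $\omega$ recovers the geodesic distance $d_{\tg}(\tgamma(t),\alpha(\tgamma(s)))$ and forces $\omega$ to align with the initial tangent of that minimizing geodesic.

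The final step is stationary phase in $(s,t,\omega)$. The transversal cutoff $B_{\pm 1}$ ensures that admissible $\omega$ make a uniform angle with $\tgamma'(t)$, so the geodesic issuing from $\tgamma(t)$ in direction $\omega$ leaves $\gamma$ transversally; this non-tangency is what makes the $\omega$-integration a genuine stationary-phase integral at the critical direction. I would then invoke the phase-function estimates of \S5 (analogues of the bounds in \cite{emmett2}) to compute the mixed Hessian $\partial_s\partial_t \Psi_\alpha$ at a critical point. The calculation compares the second fundamental form of the geodesic sphere about $\alpha(\tgamma(s))$ at $\tgamma(t)$ with the geodesic curvature of $\gamma$; as $d_{\tg}(\tgamma,\alpha(\tgamma)) > R \gg 1$, these spheres are uniformly close to the limiting circles whose curvature is $\bfk$. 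Together with the angle prescription coming from the transversal microlocalization and the $\epsilon$-shift, this brings the quantities $\bigl\langle \tfrac{D}{dt}\gamma',\gamma'^\perp\bigr\rangle \pm \sqrt{1-\epsilon^2}\,\bfk_\gamma(\pm\gamma'^\perp)$ into the Hessian determinant; the hypotheses \eqref{curv hyp +}--\eqref{curv hyp -} guarantee that this determinant is bounded away from zero uniformly in $\alpha$, yielding the required $\la^{-1/2}$ decay per $\alpha$ (the remaining $\la^{-1}$ from full stationary phase offsets the $\la^{1/2}$ size of the wave kernel).

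The main obstacle is the uniform non-degeneracy step at the end: one must show that the Hessian estimate is uniform across \emph{all} $\alpha \in \Gamma\setminus A$, even those with $d_{\tg}(\tgamma,\alpha(\tgamma))$ as large as $T$. This is a limiting-circle computation that requires careful asymptotic control of Jacobi fields along the connecting geodesic from $\tgamma(t)$ to $\alpha(\tgamma(s))$; the parameter $R$ in the definition of $A$ must be chosen large enough that the geodesic sphere of radius $\geq R$ is close enough to the limiting-circle regime that $\bfk$ is a valid surrogate for its curvature, with error small compared to $\delta$. Once this uniform non-degeneracy is in hand, summing over the $\leq Ce^{C'T}$ terms and dividing by $T$ produces the stated bound $Ce^{C'T}\la^{-1/2}$, which is $\leq C(\log\la)^{-1}$ provided $c$ in $T = c\log\la$ is taken so small that $C'c < 1/2$.
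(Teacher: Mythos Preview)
Your architecture is correct and matches the paper's: count the deck transformations by finite propagation speed, then obtain a per-$\alpha$ oscillatory integral bound via two-dimensional stationary phase whose non-degeneracy is driven by the curvature hypotheses together with the large-radius limiting-circle approximation. However, one key detail in your description is inverted and would mislead an actual computation.

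You attribute the geodesic-sphere-versus-$\kappa_\gamma$ comparison to the mixed entry $\partial_s\partial_t\Psi_\alpha$. In fact the off-diagonal entry carries no curvature information at all: it satisfies the crude bound $|\partial_s\partial_t\phi_\alpha|\le 2/r_\alpha(t,s)$ (Lemma~\ref{off-diagonal lemma}), which is made uniformly \emph{small} simply by taking $R$ large. The curvature comparison lives in the \emph{pure} second derivatives: equations \eqref{partial_s^2}--\eqref{partial_t^2} express $\partial_s^2\phi_\alpha$ and $\partial_t^2\phi_\alpha$ in terms of $\kappa_{\tgamma}$ and the curvature of geodesic circles of radius $r_\alpha$, and Lemma~\ref{large radius} replaces the latter by $\bfk$ up to $O(1/r_\alpha)$. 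Hypotheses \eqref{curv hyp +}--\eqref{curv hyp -} then force $|\partial_s^2\phi_\alpha|,|\partial_t^2\phi_\alpha|\ge\eps_1/2$ near critical points (Lemma~\ref{diagonal lemma}). The Hessian is therefore non-degenerate by \emph{diagonal dominance}, not through its off-diagonal entry.

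Two smaller points. First, the paper does not redo the parametrix and stationary phase in $(\tau,\omega)$ here; the H\"ormander parametrix you describe is only valid for small $|\tau|$, whereas for $|\tau|$ up to $T$ on the universal cover one needs the Hadamard parametrix, already packaged into Proposition~\ref{prop5.1} and \eqref{K_i'}. This reduces the problem directly to a two-variable oscillatory integral with phase $\phi_\alpha(t,s)=\epsilon(t-s)+r_\alpha(t,s)$, with amplitude derivatives growing like $e^{Cr_\alpha}$ (so the per-$\alpha$ bound is $Ce^{CT}\lambda^{-1/2}$, not a uniform $C\lambda^{-1/2}$; this is harmless). Second, your sketch only treats the case where $\nabla\phi_\alpha$ vanishes. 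The paper's Lemma~\ref{diagonal lemma} also handles the complementary case: if $\partial_s\phi_\alpha$ (or $\partial_t\phi_\alpha$) is nonvanishing on $\mathcal I\times\mathcal I$, it is bounded below by a uniform $\eta>0$ and integration by parts gives a stronger bound.
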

As argued, Theorem \ref{main} follows from Propositions \ref{beta component} through \ref{large time}.


\section{Stationary Phase Tool and Proof of Proposition \ref{beta component}}


We will be dealing with oscillatory integrals with up to eight variables of integration, so it will be convenient to be able to use the method of stationary phase in stages to avoid having to work with excessively large matrices. To this end, we use ~\cite[Theorem 7.7.6 ]{HI}, summarized below for convenience.

Let $\phi(x,y)$ be a smooth phase function on $\R^m \times \R^n$ with
\[
\nabla_y \phi(0,0) = 0 \qquad \text{ and } \qquad \det \nabla_y^2 \phi(0,0) \neq 0,
\]
and let $a(\lambda;x,y)$ be a smooth amplitude with small, adjustable support satisfying
\[
|\partial_\lambda^j \partial_x^\alpha \partial_y^\beta a(\lambda;x,y)| \leq C_{j,\alpha,\beta} \lambda^{-j} \qquad \text{ for } \lambda \geq 1
\]
for $j = 0,1,2,\ldots$ and multiindices $\alpha$ and $\beta$. $\nabla_y^2 \phi \neq 0$ on a neighborhood of $0$ by continuity. There exists locally a smooth map $x \mapsto y(x)$ whose graph in $\R^m \times \R^n$ contains all points in a neighborhood of $0$ such that $\nabla_y \phi = 0$, by the implicit function theorem. Let $\sigma$ denote the signature of $\nabla_y^2 \phi$. By continuity, $\sigma$ is constant on a neighborhood of $0$. We adjust the support of $a$ to lie in the intersection of these neighborhoods.

\begin{lemma}[\cite{HI}]\label{st phase}
	Let 
	\[
	I(\lambda; x) = \int_{\R^n} e^{i\lambda \phi(x,y)} a(\lambda; x, y) \, dy
	\]
	with $\phi$ and $a$ as above. Then for any fixed positive integer $N$, there exists $R_N(\lambda;x)$ such that
	\begin{align*}
	I(\lambda; x) = (\lambda/2\pi)^{-n/2} &|\det \nabla_y^2 \phi(x,y(x))|^{-1/2} e^{\pi i \sigma/4} e^{i\lambda \phi(x,y(x))} a(\lambda; x, y(x))\\
	&\hspace{8em}+ \lambda^{-n/2 - 1} e^{i\lambda \phi(x,y(x))} R_N(\lambda; x) + O(\lambda^{-N})
	\end{align*}
	where $R_N$ has compact support and satisfies
	\[
	|\partial_\lambda^j \partial_x^\alpha R(\lambda; x)| \leq C_{j,\alpha} \lambda^{-j} \qquad \text{ for } \lambda \geq 1,
	\]
	and where the $O(\lambda^{-N})$ term is uniform in $x$.
\end{lemma}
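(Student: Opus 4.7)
The plan is to reduce Lemma \ref{st phase} to the standard quadratic case by localizing at the critical point $y(x)$ and Taylor expanding both the phase and amplitude. First I would invoke the implicit function theorem: since $\nabla_y \phi(0,0) = 0$ and $\det \nabla_y^2 \phi(0,0) \neq 0$, there is a unique smooth map $x \mapsto y(x)$ defined near $0$ satisfying $\nabla_y \phi(x, y(x)) \equiv 0$ and $y(0) = 0$. After shrinking the support of $a$, the Hessian $Q(x) := \nabla_y^2 \phi(x, y(x))$ is nondegenerate with constant signature $\sigma$ throughout. Make the change of variables $y = y(x) + z$ and Taylor expand in $z$:
\[
\phi(x, y(x) + z) = \phi(x, y(x)) + \tfrac{1}{2}\langle Q(x) z, z \rangle + \phi_3(x, z),
\]
with $\phi_3(x, z) = O(|z|^3)$ smooth. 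Pulling out the non-stationary factor $e^{i\lambda \phi(x, y(x))}$ reduces $I(\lambda;x)$ to analyzing
\[
J(\lambda; x) = \int_{\R^n} e^{i\lambda [\tfrac{1}{2}\langle Q(x) z, z \rangle + \phi_3(x,z)]} \tilde a(\lambda; x, z) \, dz,
\]
where $\tilde a$ inherits the symbol bounds of $a$.

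Next I would apply the classical stationary phase formula for a purely quadratic phase, proved via Fourier transform and Plancherel: for a nondegenerate symmetric $A$ of signature $\sigma$,
\[
\int_{\R^n} e^{i\lambda \langle A z, z\rangle/2} b(z) \, dz \sim (2\pi/\lambda)^{n/2} |\det A|^{-1/2} e^{i\pi \sigma/4} \sum_{k \geq 0} \lambda^{-k} L_k b(0),
\]
where each $L_k$ is a differential operator of order $2k$ built from $A^{-1}$. To handle the cubic correction $e^{i\lambda \phi_3(x,z)}$, split the region of integration at $|z| = \lambda^{-1/2 + \eps}$ for some small $\eps > 0$. On the inner region, expand $e^{i\lambda \phi_3}$ as a finite Taylor polynomial in $\lambda \phi_3$, with each term absorbable into a new parameter-dependent amplitude; on the outer region the gradient of the quadratic part has size $\gtrsim \lambda^{1/2 + \eps}$, so repeated integration by parts yields $O(\lambda^{-N})$ for any $N$. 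Collecting the $k = 0$ term produces the principal contribution $(\lambda/2\pi)^{-n/2} |\det Q(x)|^{-1/2} e^{i\pi \sigma/4} a(\lambda; x, y(x))$, while the terms with $1 \leq k \leq N$ assemble into $\lambda^{-n/2 - 1} R_N(\lambda; x)$ after extracting one power of $\lambda^{-1}$.

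The main obstacle is the stated uniformity: the remainder $R_N$ must satisfy the symbol bounds $|\partial_\lambda^j \partial_x^\alpha R_N(\lambda;x)| \leq C_{j,\alpha} \lambda^{-j}$ uniformly in $x$ on a compact set. This requires carefully tracking how $\partial_x^\alpha$ interacts with the implicitly defined $y(x)$, with $Q(x)^{-1}$ and with the remainder $\phi_3(x, z)$, and using that $a$ is itself a $\lambda$-symbol satisfying analogous bounds, so that each differentiation in $\lambda$ produces the gain $\lambda^{-1}$. The cleanest way to make this quantitative is to write $R_N$ as a finite sum of terms of the form $P_k(x, D_z) \tilde a(\lambda; x, z)|_{z=0}$ with $P_k$ smooth differential operators depending on $Q(x)^{-1}$ and on the jets of $\phi_3$; the required bounds then follow directly from the assumed estimates on $a$. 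The $O(\lambda^{-N})$ term is uniform in $x$ by the standard non-stationary phase argument on the outer region, which works uniformly as long as $Q(x)$ stays bounded away from degeneracy on the compact support of $\tilde a$.
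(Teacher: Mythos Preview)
The paper does not actually prove Lemma \ref{st phase}; it simply quotes \cite[Theorem 7.7.6]{HI} and remarks that inspection of H\"ormander's proof yields the stated uniformity in $x$ and the symbol-type bounds on $R_N$. So there is no ``paper's own proof'' to compare against beyond the citation.

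Your sketch is a correct outline of a standard stationary phase argument and would yield the lemma. It differs mildly from H\"ormander's route: H\"ormander applies the Morse lemma with parameters to change variables so that the phase becomes \emph{exactly} quadratic $\tfrac12\langle Q(x)z,z\rangle$, after which the asymptotic expansion follows directly from the explicit Gaussian formula and differentiation under the integral; the remainder operators $L_k$ then come out as polynomials in $Q(x)^{-1}$ acting on the transported amplitude, and uniformity in $x$ is immediate because the Morse change of variables depends smoothly on $x$. Your approach instead keeps the cubic remainder $\phi_3$ in the phase and handles it by splitting at $|z|\sim\lambda^{-1/2+\eps}$ and Taylor-expanding $e^{i\lambda\phi_3}$ on the inner region. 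This works, but the bookkeeping for the symbol bounds on $R_N$ is slightly heavier than in the Morse-lemma route, since each Taylor term $(i\lambda\phi_3)^m/m!$ contributes a net power $\lambda^{m}\cdot\lambda^{-3m/2}$ after stationary phase and one must organize the resulting double sum in $k$ and $m$. Either way the conclusion and the uniformity claims are the same.
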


Inspection of the proof of ~\cite[Theorem 7.7.6]{HI} shows that all the bounds, constants, supports, and neighborhoods in Lemma \ref{st phase} are uniform and only depend on finitely many derivatives of $a$ and $\phi$. Now we are ready to prove Proposition \ref{beta component}.

\begin{proof}[Proof of Proposition \ref{beta component}] For the sake of simplicity, we shall only give the proof for the case when $i=0$, the other two cases follow from the same proof.
	By the H\"ormander's parametrix for the half-wave operator, (see \cite[Chap. 4]{SFIO}), we can write
	\[
	e^{i\tau\lap g}(x,y) = \frac{1}{(2\pi)^n} \int_{\R^n} e^{i(\varphi(x,y,\xi) + \tau p(y,\xi))} q(\tau,x,y,\xi) \, d\xi
	\]
	modulo a smooth kernel, where $p(y,\xi) $
	is the principal symbol of $\lap g$, and where $\varphi\in C^\infty(\R^n\setminus\{0\})$ is homogeneous of degree $1$ in $\xi$ and satisfies
	\begin{equation} \label{local varphi}
	|\partial_{\xi}^\alpha (\varphi(x,y,\xi) - \langle x - y, \xi \rangle)| \leq C_\alpha |x - y|^2|\xi|^{1 - |\alpha|}
	\end{equation}
	for multiindices $\alpha \geq 0$ and for $x$ and $y$ sufficiently close.
	Moreover, $q$ satisfies bounds
	\begin{equation} \label{q bounds}
	|\partial_\xi^\alpha \partial_{\tau,x,y}^\beta q(\tau,x,y,\xi)| \leq C_{\alpha,\beta} (1 + |\xi|)^{-|\alpha|},
	\end{equation}
	and where for $\tau \in \supp \beta$, $q$ is supported on a small neighborhood of $x = y$. Hence, the main term of \eqref{local term} is
	\begin{equation}
	\frac{1}{T} \iint K(T,\la;\gamma(s),\gamma(t))e^{i\nu(s-t)} \, ds \, dt,
	\end{equation}
	if the kernel is the following
	\begin{multline*}
	K(T,\la;x,y) =\idotsint e^{i(\varphi(w,z,\xi) +\langle x - w,\eta \rangle +\langle z - y ,\zeta \rangle  + \tau( p(z,\xi) - \la ))} \\ a(T, \lambda; \tau, x, y, \xi, w, z, \eta, \zeta) \,
	dw\,dz\,d\xi \, d\eta \, d\zeta\,d\tau,
	\end{multline*}
	where the amplitude is given by \[
	a(T, \lambda; \tau, x, y, \xi, w, z, \eta, \zeta) = \beta(\tau)\hat \chi(\tau/T) B_0(x,w,\eta) q(\tau, w, z,  \xi) \overline{ B_0(y,z,\zeta)},
	\]
	and satisfies
	\begin{equation} \label{q bounds}
	|\partial_\xi^{\alpha_1} \partial_\eta^{\alpha_2} \partial_\zeta^{\alpha_3} \partial_{\tau,x,y,w,z}^\beta a| \leq C_{\alpha,\beta} (1 + |\xi|)^{-|\alpha_1|}(1 + |\eta|)^{-|\alpha_2|}(1 + |\zeta|)^{-|\alpha_3|}
	\end{equation}
	It suffices to show that
	\begin{equation}\label{local goal}
	\left|\iint K(T,\la;\gamma(s),\gamma(t))e^{i\nu(s-t)} \, ds \, dt\right|\le C.
	\end{equation}
	After a change of coordinates sending $(\xi,\eta,\zeta) \mapsto (\lambda \xi,\lambda \eta,\lambda \zeta)$, we have
	\begin{multline*}
	K(T,\la;x,y) =\la^6\idotsint e^{i\la\Phi(\tau,x,y,\xi,w,z,\eta,\zeta)} \\ a(T, \lambda; \tau, x, y, \la\xi, w, z, \la\eta, \la\zeta) \,
	dw\,dz\,d\xi \, d\eta \, d\zeta\,d\tau,
	\end{multline*}
	After fixing $\tau,x,y,\xi$, the phase function
	\[
	\Phi(\tau,x,y,\xi,w,z,\eta,\zeta) = \varphi(w,z,\xi) + \langle x - w,\eta \rangle + \langle z - y ,\zeta \rangle + \tau( p(z,\xi) - 1 )
	\]
	has a unique critical point in the four variables  $(w,z,\eta,\zeta)$ at
	\[
	(w,z,\eta,\zeta) = (x,y,\nabla_x \varphi(x,y,\xi), -\nabla_y \varphi(x,y,\xi) + \tau \nabla_y p(y,\xi)).
	\]
	It is easy to see that at this critical point, the Hessian of $\Phi$ is
	\[
	\nabla_{w,z,\eta,\zeta}^2 \Phi = \begin{bmatrix}
	* & * & -I & 0 \\
	* & * & 0 & I \\
	-I & 0 & 0 & 0 \\
	0 & I & 0 & 0
	\end{bmatrix},
	\]
	which has determinant $-1$ and signature $0$. Then by Lemma \ref{st phase}, we see that
	\begin{equation*}
	K(T,\lambda; x,y) = \lambda^2 \iint \widetilde a(T,\lambda; \tau, x, y, \xi) e^{i\lambda \widetilde\Phi(\tau, x, y, \xi)} \, d\tau \, d\xi,
	\end{equation*}
	modulo lower order terms. Here
	\[
	\widetilde a(T,\lambda; \tau, x, y, \xi) = a(T, \lambda; \tau, x, y, \xi, x,y,\nabla_x \varphi(x,y,\xi), -\nabla_y \varphi(x,y,\xi) + \tau \nabla_y p(y,\xi))
	\]
	and
	\[
	\widetilde\Phi(\tau,x,y,\xi) = \varphi(x,y,\xi) + \tau( p(y,\xi) - 1 ).
	\]
	Let
	\[
	\Psi(\tau,s,t,\xi) = \widetilde\Phi(\tau,\gamma(s),\gamma(t),\xi)+\epsilon(s-t),
	\]
	then we can see that the main term of \eqref{local goal} is
	\begin{equation*}
	\iiiint \widetilde a(T,\lambda; \tilde \gamma(s), \tilde \gamma(t)) e^{i\lambda \Psi(\tau,s,t,\xi)} \, ds \, dt \, d\xi \, d\tau.
	\end{equation*}
	The gradient of the phase function $\Psi$ is
	\[
	\nabla_{\tau,s,\xi_1,\xi_2} \Psi = \begin{bmatrix}
	|\xi| - 1 \\
	\epsilon + \xi_1 + O(|s - t||\xi|)\\
	s - t + O(|s-t|^2) + \tau \partial_{\xi_1} |\xi|\\
	\tau \partial_{\xi_2} |\xi| + O(|s - t|^2)\\
	\end{bmatrix},
	\]  with critical points at $(\tau, s, \xi_1, \xi_2) = (0,t,-\epsilon, \pm \sqrt{1 - \epsilon^2})$. The Hessian at these critical points is:
	\[
	\nabla_{\tau, s, \xi_1, \xi_2}^2 \Psi = \begin{bmatrix}
	0 & 0 & -\epsilon & \pm\sqrt{1-\epsilon^2} \\
	0 & * & 1 & 0 \\
	-\epsilon & 1 & 0 & 0 \\
	\pm\sqrt{1-\epsilon^2} & 0 & 0 & 0
	\end{bmatrix}
	\]
	Since by our assumption, $1 - \epsilon \geq \delta$, $\sqrt{1-\epsilon^2}$ is bounded away from zero, this matrix has determinant uniformly bounded away from zero. Then if we invoke the method of stationary phase again, \eqref{local goal} follows, finishing the proof of Proposition \ref{beta component}.
\end{proof}


\section{Kernel Bounds and Proofs of Propositions \ref{small time} and \ref{nonsmall time}}

We shall need an explicit expression for the kernel
\begin{equation}
K(T,\la;x,y)=\int \bigl(1-\beta(\tau)\bigr)\hat \chi(\tau/T) e^{i\tau \la} \bigl(\cos \tau \Pe\bigr)(x,y) \, d\tau,
\end{equation}
evaluating at $(x,y)=(\tilde \gamma(t),\alpha(\tilde \gamma(s)))$. Note that $K_{i,\alpha}$ is the kernel corresponding to $\widetilde B_i$-conjugation of $K$.
The following proposition is adapted from ~\cite[Proposition 5.1]{Gauss} and characterizes the kernel $K(T,\la;x,y)$. In what follows, we let $\Delta_x$ and $\Delta_y$ denote the Laplace-Beltrami operators on $\tilde M$ operating in the $x$ and $y$ variables, respectively.

\begin{proposition}[{\cite[Proposition 5.1]{Gauss}}]\label{prop5.1}  Let $T=c\log\la$. If $d_{\tilde g}\ge1$  and $\la\gg 1$, we have
	\begin{equation}\label{kernel1}
	K(T,\la;x,y)= \la^{1/2}\sum_\pm a_\pm(T,\la; x,y) e^{\pm i\la d_{\tilde g}(x,y)} +R(T,\la,x,y),
	\end{equation}
	where
	\begin{equation}\label{kernel2}
	|a_\pm(T,\la; x,y)|\le C,
	\end{equation}
	and
	if $\ell,m=1,2,3,\dots$ are fixed
	\begin{equation}\label{kernel3}
	\Delta^\ell_x \Delta_y^m a_\pm(T,\la; x,y) = O(\exp(C_{\ell,m} d_{\tilde g}(x,y)))
	\end{equation}
	and
	\begin{equation}\label{kernel5}
	|R(T,\la,x,y)|\le\la^{-5},
	\end{equation}
	provided the constant $c>0$  is sufficiently small.
	Also,  in this case we also have
	\begin{equation}\label{kernel6}
	K(T,\la;x,y)=O(\la^{-5}), \quad \text{if } \, \, \, d_{\tilde g}(x,y)\le 1.
	\end{equation}
\end{proposition}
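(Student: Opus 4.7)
The plan is to apply the Hadamard parametrix for the wave kernel $\cos(\tau\sqrtd)(x,y)$ on the Hadamard surface $(\R^2,\tg)$ and then to extract the main oscillation from the $\tau$-integral by stationary phase. By the Cartan-Hadamard theorem, $(\R^2,\tg)$ has no conjugate points, so the parametrix is valid globally, not merely in a neighborhood of the diagonal; in two dimensions it represents $\cos(\tau\sqrtd)(x,y)$ as an oscillatory integral whose phase is homogeneous of degree $1$ in the cotangent variable and whose amplitude satisfies derivative estimates controlled by Jacobi field growth, which on a surface of nonpositive curvature is at most exponential in arclength.

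Substituting the parametrix into the definition of $K$ and passing to polar coordinates $\xi=\rho\omega$ with $\rho>0$ and $\omega\in S^1$, the phase takes the form $\rho\,\tilde\varphi(x,y,\omega)-\tau\rho+\tau\la$ at leading order (with a symmetric contribution from the other half of $\cos$). The $\tau$-integral against $(1-\beta(\tau))\hat\chi(\tau/T)$ is a Fourier transform evaluated at $\la-\rho$, localizing $\rho$ to an $O(T)$ neighborhood of $\la$ up to a rapidly decaying error. The remaining angular integration is $\int_{S^1}e^{i\la\tilde\varphi(x,y,\omega)}\,b(x,y,\omega)\,d\omega$, whose phase $\tilde\varphi$ at leading order equals $\langle\exp_y^{-1}(x),\omega\rangle_y$; its critical points on $S^1$ are the two unit directions along the unique $\tg$-geodesic joining $x$ and $y$, with critical values $\pm\dgt(x,y)$. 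Stationary phase in $\omega$ then produces the leading expansion $\la^{1/2}\sum_\pm a_\pm(T,\la;x,y)e^{\pm i\la\dgt(x,y)}$; the power $\la^{1/2}$ is the product of $\la^1$ from the $\rho$-integral and $\la^{-1/2}$ from the one-dimensional $S^1$ stationary phase.

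The uniform bound $|a_\pm|\le C$ and the derivative bounds \eqref{kernel3} follow from the Hadamard transport equations: on a nonpositively curved surface Jacobi fields grow at most exponentially in $\dgt$, so the parametrix amplitudes and all of their $x,y$ derivatives inherit bounds of the form $O(\exp(C_{\ell,m}\dgt(x,y)))$. The remainder $R$ collects the parametrix error (smooth, of order $\la^{-\infty}$), the lower-order terms in both stationary phase expansions, and the non-stationary tails, each handled by repeated integration by parts. Each such integration by parts gains a factor of $\la^{-1}$ at the cost of one derivative of the amplitude, contributing $e^{CT}=\la^{Cc}$; choosing $c$ sufficiently small in $T=c\log\la$ therefore drives the cumulative remainder below $\la^{-5}$.

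For \eqref{kernel6} with $\dgt(x,y)\le 1$, the cutoff $1-\beta(\tau)$ restricts $\tau$ to $|\tau|\ge 1\ge\dgt(x,y)$, a region strictly inside the forward/backward light cone of $y$, where $\cos(\tau\sqrtd)(x,y)$ is smooth in $\tau$; the factor $e^{i\tau\la}$ then oscillates against a slowly varying amplitude, and integration by parts in $\tau$ immediately yields the required $O(\la^{-5})$ bound. The main obstacle I anticipate is the bookkeeping: one must coordinate the exponential-in-$\dgt$ amplitude bounds, the logarithmic time window $T=c\log\la$, and the target remainder $\la^{-5}$ simultaneously, and it is precisely this balance that forces the smallness of $c$.
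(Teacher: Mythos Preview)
Your sketch is essentially correct and matches the Hadamard-parametrix/stationary-phase argument that underlies the result cited from \cite{Gauss}; the paper does not reprove the proposition from scratch but simply cites \cite{Gauss} and then explains how to obtain the slightly stronger bounds \eqref{kernel3}, \eqref{kernel5}, \eqref{kernel6}. For \eqref{kernel5} and \eqref{kernel6} the paper's justification is exactly what you propose: take more terms in the Hadamard parametrix and integrate by parts additional times, with the smallness of $c$ in $T=c\log\la$ absorbing the exponential losses.

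The one genuine methodological difference is in \eqref{kernel3}. You assert that the mixed bounds $\Delta_x^\ell\Delta_y^m a_\pm = O(\exp(C_{\ell,m}\dgt))$ follow directly from the Hadamard transport equations and Jacobi-field growth. That is plausible but requires carefully tracking mixed $x,y$ derivatives through the parametrix construction, which \cite{Gauss} does not do (it only records the pure bounds $\Delta_x^\ell a_\pm$ and $\Delta_y^m a_\pm$). The paper instead avoids reopening the parametrix entirely: it takes the pure-derivative bounds from \cite{Gauss} as input and deduces the mixed bounds via a Sobolev-embedding argument on $M\times M$ (Proposition~\ref{mixed derivatives prop} in the appendix), using that $\Delta_x+\Delta_y$ is the Laplace--Beltrami operator on the product and that $e_p(x)e_q(y)$ diagonalizes both $\Delta_x$ and $\Delta_y$ simultaneously. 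Your direct approach would work in principle, but the paper's route is cleaner because it treats the amplitude as a black box and needs only the pure-derivative estimates already on record.
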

We remark that the bounds \eqref{kernel3}, \eqref{kernel5}, and \eqref{kernel6} are stronger than those stated in \cite{Gauss}. The first follows from the pure derivative bounds
\begin{align*}
&\Delta_x^\ell a_\pm(T,\la;x,y) = O(\exp(C_\ell d_{\tilde g}(x,y))) \qquad \text{ and }\\
&\Delta_y^\ell a_\pm(T,\la;x,y) = O(\exp(C_\ell d_{\tilde g}(x,y)))
\end{align*}
given by ~\cite[Proposition 5.1]{Gauss} and Proposition \ref{mixed derivatives prop} in the appendix.
The latter two follow easily from the proof in \cite{Gauss} by increasing the number of terms used in the Hadamard parametrix and doing integration by parts a few more times. Now one can easily see that Proposition \ref{small time} follows directly from \eqref{kernel6} and the fact that each $\widetilde B_i$ is bounded on $L^\infty$ with norm about $\la^2$.

Now we can use Lemma \ref{st phase} to compute $K_{i,\alpha}$ for $i=-1,0,1$. Indeed, from Proposition \ref{prop5.1}, we see that
\begin{multline}\label{K_i}
K_{i,\alpha}(T,\lambda;x,y) \\
= \frac{\lambda^{1/2}}{(2\pi)^4} \sum_\pm \iiiint  a_\pm(T,\lambda;w,z) e^{ i (\pm\lambda d_{\tilde g}(w,z)+\langle x - w, \eta \rangle+\langle \alpha^{-1}(z) - y, \zeta \rangle)} \\\widetilde B_i(x,w,\eta) \overline{\widetilde B_i(\alpha^{-1}(z),y,\zeta)} \, dw \, dz \, d\eta \, d\zeta 
+ B_i RB_i^*(T,\lambda; x,y).
\end{multline}
Here, the $B_iRB_i^*$ term maps $L^\infty\rightarrow L^\infty$ with norm $O(\la^{-1})$ thanks to \eqref{kernel5} and \eqref{B_i norm}. It suffices to compute the first term.

After a change of variables sending $\eta \mapsto \lambda \eta$ and $\zeta \mapsto \lambda \zeta$, we can see that the main term above is
\[
\lambda^{9/2} \iiiint a(T,\lambda; x,y,w,z,\eta,\zeta) e^{i\lambda \Phi(x,y,w,z,\eta,\zeta)} \, dw \, dz \, d\eta \, d\zeta
\]
where
\[
\Phi(x,y,w,z,\eta,\zeta) = \pm d_{\tilde g}(w,z) + \langle x - w,\eta \rangle + \langle \alpha^{-1}(z) - y , \zeta \rangle
\]
and
\[
a(T,\lambda; x,y,w,z,\eta,\zeta) = a_\pm(T,\lambda; w,z) \widetilde B_i(x,w,\lambda \eta) \overline{\widetilde B_i(\alpha^{-1}(z),y,\lambda \zeta)}.
\]
Let us first look at the gradient of the phase function $\Phi$ in all variables $(w,z,\eta,\xi)$,
\[
\nabla_{w,z,\eta,\zeta} \Phi = \begin{bmatrix}
-\eta \pm \nabla_w d_{\tilde g}(w,z) \\
\zeta \pm \nabla_z d_{\tilde g}(w,z) \\
x - w \\
\alpha^{-1}(z) - y 
\end{bmatrix},
\]
which has a unique critical point at $(w,z,\eta,\zeta) = (x,\alpha (y),\pm \nabla_x d_{\tilde g}(x,\alpha (y)), \mp \nabla_y d_{\tilde g}(x,\alpha (y)))$. In particular, if $\sigma$ is the unit-speed geodesic connecting the two points $x$ and $\alpha (y)$, with $\sigma(0) = x$ and $\sigma(d_{\tilde g}(x,\alpha(y))) = \alpha (y)$, the critical point is $(x,y,\pm \sigma'(0), \pm \alpha_* \sigma'(d_{\tilde g}(x,\alpha (y))))$, where $\alpha_*$ is the map induced on the cotangent bundle by $\alpha$. The Hessian matrix of $\Phi$ is
\[
\nabla_{w,z,\eta,\zeta}^2 \Phi = \begin{bmatrix}
* & * & -I & 0 \\
* & * & 0 & I \\
-I & 0 & 0 & 0 \\
0 & I & 0 & 0
\end{bmatrix},
\]
which has full rank. This matrix has signature $0$ and determinant $-1$ so by Lemma \ref{st phase}, modulo a $O(e^{Cd_{\tg}(x,\alpha (y))} \lambda^{-1/2})$ error, \eqref{K_i} is equal to

\begin{equation} \label{K_i'}
\lambda^{1/2} \sum_\pm a_\pm(T,\lambda;x,\alpha (y))  [b_*(x)]^2  [b_*(y)]^2 B_i(\mp \sigma'(0)) B_i(\mp \alpha_* \sigma'(d_\tg(x,\alpha(y))) e^{\pm i \lambda d_{\tilde g}(x,\alpha (y))}.
\end{equation}


Now we are ready to prove Propositions \ref{small time} and \ref{nonsmall time}. By \eqref{kernel6} and the same argument as before, $K_{i,\alpha}(T,\lambda;x,y) = O(\lambda^{-1})$ for $d_{\tilde g}(x,y) \leq 1$, whence follows Proposition \ref{small time}. We prove Proposition \ref{nonsmall time} below.

\begin{proof}[Proof of Proposition \ref{nonsmall time}.]
	Notice that by Huygen's Principle, the number of nonzero summands in \eqref{2.18} is at most exponential in $T$, and thus to prove Proposition \ref{nonsmall time}, it suffices to show that for each $\alpha\neq I$, there exists a constant $C$ independent of $\alpha$, such that
	\begin{equation} \label{nonsmall goal}
	\left| \iint e^{i\nu(s-t)} \widetilde K_{0,\alpha}(T,\lambda; \tilde \gamma(t), \tilde \gamma(s)) \, ds \, dt \right| \leq Ce^{CT} \lambda^{-1/2}.
	\end{equation}
	In fact, by \eqref{K_i'}, the left hand side of \eqref{nonsmall goal} is equal to
	\begin{multline} 
	\iint \lambda^{1/2} \sum_\pm a_\pm(T,\lambda;\tilde \gamma(t), \alpha(\tilde \gamma(s))) [b(t)]^2 [b(s)]^2 B_i(\mp \sigma'(0))\\ B_i(\mp \alpha_* \sigma'(r_\alpha(t,s))) e^{i\la(\epsilon(s-t) \pm  r_\alpha(t,s))} \, ds \, dt, 
	\end{multline}
	where we have set $r_\alpha(t,s) = d_\tg(\tilde \gamma(t), \alpha( \tilde \gamma(s)))$ and where $\sigma$ is the unit-speed geodesic connecting the two points $\tilde \gamma(t)$ and $\alpha (\tilde \gamma(s))$.
	The key observation here is that the phase function
	$\epsilon(s-t) \pm r_\alpha(t,s)$ has no critical point in the support of the integrand. In fact, the $(t,s)$ gradient of this phase function is
\[
	(-\epsilon \pm \partial_t r_\alpha(t,s), \epsilon\pm \partial_s r_\alpha(t,s)),
\]
	which vanishes only if the angle made by the geodesic $\sigma$ and $\tilde\gamma$ has cosine value to be equal to $\mp\epsilon$, and at the same time the angle made by $\sigma$ and $\alpha( \tilde\gamma)$ has cosine value equal to $\pm\epsilon$. Since $|\epsilon|\le 1-\delta$ is uniformly bounded away from 1, at a critical point of the phase these two angles will be uniformly bounded away from $0$ and $\pi$ independent of the choice of $\alpha$. However, neither of these can happen in the support of $a_\pm$, due to our choice of the phase support of $B_0$, see Figure \ref{fig2}. Thus, we have an absolute lower bound for the gradient of the phase function which is uniform in $\alpha$. Now we can use \eqref{kernel3} to  integrate by parts in $s$ to get \eqref{nonsmall goal}, finishing the proof of Proposition \ref{nonsmall time}.
\end{proof}


\section{Phase Function Bounds and Proofs of Propositions \ref{medium time} and \ref{large time}}

By \eqref{K_i'}, we write
\begin{multline} \label{final osc int}
	\iint e^{i\nu(s-t)} K_{i,\alpha}(T,\lambda; \tilde \gamma(t), \tilde \gamma(s)) \, ds \, dt \\
	= \lambda^{1/2} \sum_\pm \iint a_\pm(T,\lambda, \alpha;t,s) e^{i \pm \lambda \phi_\alpha(t,s)} \, ds \, dt + O(e^{CT}\lambda^{-1/2})\\
\end{multline}
with phase function\footnote{Strictly speaking, this should be $\pm\epsilon(t - s) + r_\alpha(t,s)$, however, our $\epsilon$ is allowed to be negative, so we omit the $\pm$ sign for simplicity.}
\[
\phi_\alpha(t,s) = \epsilon(t - s) + r_\alpha(t,s)
\]
and amplitude
\[
	a_\pm(T,\lambda,\alpha;t,s) = a_\pm(T,\lambda; \tilde \gamma(t), \alpha( \tilde \gamma(s))) [b(s)]^2[b(t)]^2 B_i(\mp \sigma'(0)) B_i(\mp \alpha_* \sigma'(r_\alpha))
\]
where $r_\alpha(t,s) = d_{\tilde g}(\tgamma(t), \alpha( \tgamma(s)))$, $\sigma$ is the geodesic adjoining $\tilde \gamma(t)$ and $\alpha( \tilde \gamma(s))$ as before,
and where by \eqref{kernel3} the amplitude satisfies
\begin{equation}\label{amp bound}
|\partial_t^j \partial_s^k a_\pm(T,\lambda,\alpha;t,s)| = O(e^{C_{j,k} r_\alpha}).
\end{equation}
Notice that we can control the support of the amplitude by controlling the support of $b$.

In what follows, we will only consider the case where $i = +1$. The arguments for when $i = -1$ are similar. Fix a unit normal vector field $v(t) = (0,1)$ in our Fermi coordinates along $\tgamma(t)$. Then, $B_1(\xi)$ is supported in the region $\langle v,\xi \rangle \geq \frac\delta4 |\xi|$. Hence the amplitude is supported only for those $t$ and $s$ for which
\begin{align}
&\label{supp of a} \langle \mp \sigma'(0), v(t) \rangle \geq \delta/4 \qquad \text{ and } \\
&\nonumber \langle \mp \alpha_* \sigma'(r_\alpha(t,s)), v(s) \rangle \geq \delta/4,
\end{align}
where the signs $\pm$ must match. We will use the methods of stationary and nonstationary phase to provide the desired bounds on the right side of \eqref{final osc int}, so we need some information about the first and second derivatives of $\phi_\alpha$.

Firstly, the $(t,s)$ gradient of the phase function is
\[
\nabla_{t,s} \phi_\alpha(t,s) = \begin{bmatrix}
\epsilon + \partial_t r_\alpha(t,s) \\
-\epsilon + \partial_s r_\alpha(t,s)
\end{bmatrix}.
\]
Note $\phi_\alpha$ has a critical point wherever the geodesic $\sigma$ is incident to both $\tgamma$ and $\alpha(\tgamma)$ at an angle with cosine value $\epsilon$. From ~\cite{emmett2} we have the computation
\begin{equation} \label{partial_s^2}
\partial_s^2 \phi_\alpha(t,s) = \cos(\theta) (\pm \kappa_{\tilde \gamma}(s) + \cos(\theta) \kappa_{S(\tgamma, r)}(s))
\end{equation}
where $\kappa_{S(\tgamma,r_\alpha)}(s)$ denotes the geodesic curvature at $\alpha(\tgamma(s))$ of the circle centered at $\tgamma(t)$ with radius $r_\alpha(t,s)$, and $\theta$ is the angle this circle makes with $\alpha(\tgamma)$ (see Figure \ref{fig3}). The sign of $\pm$ agrees with that of $\langle \sigma'(r_\alpha(t,s)), \frac{D}{ds} \alpha(\tgamma(s)) \rangle$, i.e. positive if $\sigma'$ agrees with the direction of curvature of $\alpha(\tgamma)$ and negative otherwise. A similar formula
\begin{equation} \label{partial_t^2}
\partial_t^2 \phi_\alpha(t,s) = \cos(\theta') (\mp \kappa_{\tgamma}(t)  + \cos(\theta')\kappa_{S(\alpha(\tgamma),r_\alpha)}(t))
\end{equation}
holds for the second derivative in $t$, where $\theta'$ is the angle the circle centered at $\alpha(\tgamma(s))$ makes with $\tgamma$ at $\tgamma(t)$, and where the sign $\mp$ \emph{disagrees} with the sign of $\langle \sigma'(0), \frac{D}{dt}\tgamma(t) \rangle$.
\begin{figure}
	\centering
	\includegraphics[width=.65\textwidth]{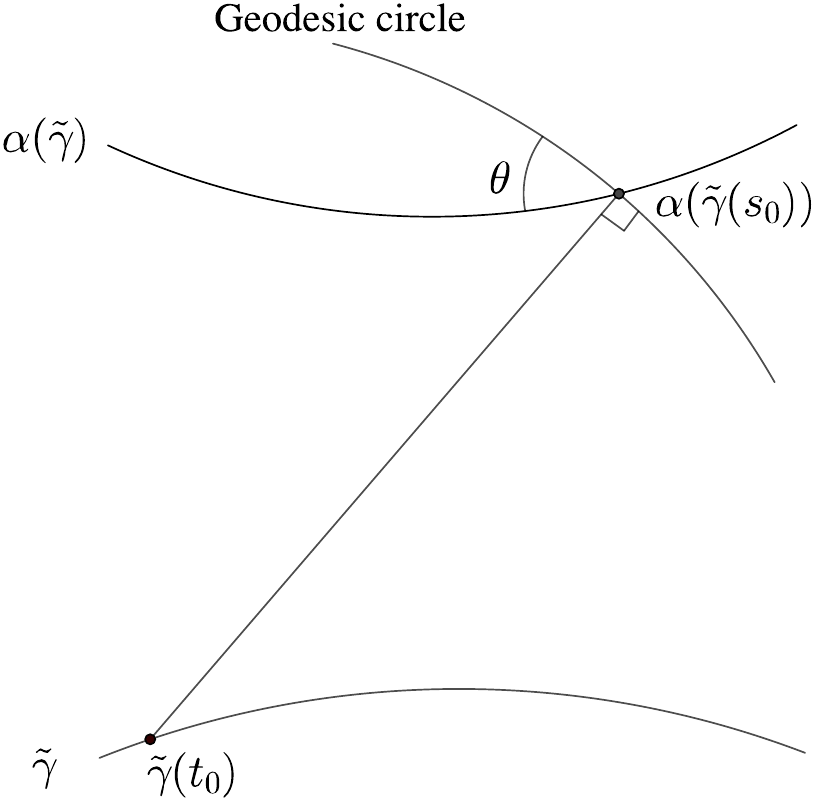}
	\caption{}
	\label{fig3}
\end{figure}
We are now in a position to prove Proposition \ref{medium time}. 

\begin{proof}[Proof of Proposition \ref{medium time}]
	Since the number of terms $\alpha \in A \setminus I$ is fixed and finite, it suffices to show
	\[
	\frac{\lambda^{1/2}}{T} \left| \iint a_\pm(T,\lambda,\alpha;t,s) e^{\pm i \lambda \phi_\alpha(t,s)} \, dt \, ds \right| \leq C_\alpha T^{-1}
	\]
	for each such $\alpha$, where the constant $C_\alpha$ is allowed to depend on $\alpha$. We claim that the union of open neighborhoods
	\[
	\{(t,s) : \nabla_{t,s} \phi_\alpha(t,s) \neq 0 \} \cup \{(t,s) : \partial_s^2 \phi_\alpha(t,s) \neq 0 \} \cup \{(t,s) : \partial_t^2 \phi_\alpha(t,s) \neq 0 \}
	\]
	covers the diagonal $t = s$ of the support of $a_\pm$. We then restrict the support of $a_\pm$ so that it lies entirely within one of these open neighborhoods. The desired bound is obtained by nonstationary phase, in the first case, or by stationary phase (e.g. by ~\cite[Corollary 1.1.8]{SFIO}) in the appropriate variable.
	
	Suppose $(t,s)$ is in the diagonal of $\supp a_\pm$ and that $\nabla_{t,s} \phi_\alpha(t,s) = 0$ and $\partial_s^2 \phi_\alpha(t,s) = 0$. Then the geodesic $\sigma$ adjoining $\tgamma(t)$ to $\alpha(\tgamma(s))$ is incident to both $\tgamma$ and $\alpha(\tgamma)$ at an angle with cosine value $\epsilon$. Moreover, $\cos(\theta) = \sqrt{1 - \epsilon^2}$ since the geodesic $\sigma$ and the circle $S(\tgamma,r_\alpha)$ intersect $\alpha(\tgamma)$ at complimentary angles. By \eqref{partial_s^2},
	\[
	0 = \pm \kappa_{\tgamma}(s) + \sqrt{1 - \epsilon^2} \kappa_{S(\tgamma,r_\alpha)}(s).
	\]
	Note this situation requires the sign $\pm$ to be negative, so that $\sigma'$ points in a direction contrary to the direction of the curvature of $\alpha(\tgamma)$. Since $s = t$ and $(t,s) \in \supp a_\pm$, $\sigma'$ must also point in a direction contrary to that of that of the curvature of $\tgamma$ (see ~\eqref{supp of a}). Therefore the sign in \eqref{partial_t^2} is positive and
	\[
	\partial_t^2 \phi_\alpha(t,s) = \sqrt{1 - \epsilon^2} \left( \kappa_{\tgamma}(t) + \sqrt{1- \epsilon^2} \kappa_{S(\alpha(\tgamma),r_\alpha)}(t) \right) > 0
	\]
	as desired.
\end{proof}

Proposition \ref{large time} requires us obtain some uniform bounds on \eqref{final osc int},  so we will need some uniform bounds on the second derivatives of $\phi_\alpha$. The bounds which follow are largely adapted from the corresponding bounds in ~\cite{emmett2}. We begin with the mixed derivative.

\begin{lemma}[{~\cite[Lemma 3.1]{emmett2}}] \label{off-diagonal lemma} We have absolute bounds
	\[
	|\partial_s \partial_t \phi_\alpha(t,s)| \leq 2/r_\alpha(t,s).
	\]
\end{lemma}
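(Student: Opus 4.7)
The plan is to interpret $\partial_t\partial_s r_\alpha$ via a normal Jacobi field along the connecting geodesic, and then apply Rauch's comparison theorem on the Hadamard manifold $(\R^2, \tg)$. Since the linear term $\epsilon(t-s)$ of $\phi_\alpha$ has vanishing mixed partial, the claim reduces to showing $|\partial_t\partial_s r_\alpha(t,s)| \leq 2/r_\alpha(t,s)$.

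To set up the calculation, fix $(t,s)$ and let $\sigma : [0,L] \to \R^2$ denote the unit-speed geodesic from $\tgamma(t)$ to $\alpha(\tgamma(s))$ with $L = r_\alpha(t,s)$, meeting $\tgamma$ and $\alpha(\tgamma)$ at angles $\theta_1$ and $\theta_2$ respectively. The first variation of arc length gives $\partial_t r_\alpha = -\cos\theta_1$. Differentiating in $s$ with $t$ held fixed, the $s$-variation of $\sigma_s'(0)$ is, by the Gauss lemma, controlled by the Jacobi field $J$ along $\sigma$ (reparameterized to unit speed) satisfying $J(0)=0$ and $J(L) = \alpha_*\tgamma'(s)$. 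Decomposing $J$ into its $\sigma'$-parallel and $\sigma'$-perpendicular components, which decouple in two dimensions, the parallel piece precisely cancels the correction coming from the varying length $L(s)$, and one is left with an identity of the form
\[
\partial_t\partial_s r_\alpha(t,s) = -\sin\theta_1 \cdot b'(0)
\]
(up to an overall sign), where $b$ is the real-valued normal Jacobi field along $\sigma$ satisfying the scalar Jacobi equation $b'' + K_{\tg} b = 0$ with $b(0)=0$ and $b(L) = \pm\sin\theta_2$.

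The decisive step is to bound $|b'(0)|$ using the nonpositive curvature of $(\R^2,\tg)$. Rauch's comparison theorem applied to $b$ with vanishing initial value yields $|b(u)| \geq u|b'(0)|$ for $u \in [0,L]$; evaluating at $u = L$ produces $|b'(0)| \leq |\sin\theta_2|/L$. Combining these ingredients,
\[
|\partial_t\partial_s r_\alpha(t,s)| \,\leq\, \frac{|\sin\theta_1 \sin\theta_2|}{r_\alpha(t,s)} \,\leq\, \frac{1}{r_\alpha(t,s)} \,\leq\, \frac{2}{r_\alpha(t,s)},
\]
as desired; the factor of two is a comfortable slack over the sharp Euclidean comparison.

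The main obstacle is the algebraic reduction in the middle step: one must carefully separate the variation of the initial direction $\sigma_s'(0)$ from the variation of the length $L(s)=r_\alpha(t,s)$ and verify that the $\sigma'$-parallel contributions cancel, so that only the normal Jacobi field $b$ remains. Once this scalar reduction is in hand, Rauch's inequality delivers the bound immediately, and the hypothesis of nonpositive curvature is used only in this last comparison.
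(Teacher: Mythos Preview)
Your argument is correct. The paper itself does not supply a proof of this lemma; it simply cites \cite[Lemma 3.1]{emmett2}. Your approach---expressing $\partial_t\partial_s r_\alpha$ via the normal Jacobi field along the connecting geodesic with $J(0)=0$, then bounding $|b'(0)|$ by $|b(L)|/L$ using the convexity coming from $K\le 0$ (equivalently Rauch's comparison with Euclidean space)---is the standard one and is what the cited reference does. One small comment: with the unit-speed parametrization and fixed initial point $\tgamma(t)$, the Jacobi field is automatically normal (since $\langle J,\sigma'\rangle$ is affine with $J(0)=0$ and $\langle J'(0),\sigma'(0)\rangle=0$), so the endpoint condition is $J(L)=(\alpha_*\tgamma'(s))^\perp$ with $|J(L)|=|\sin\theta_2|$ directly; the ``cancellation of the parallel piece'' you describe is then just the observation that the tangential part of $\alpha_*\tgamma'(s)$ is absorbed into $L'(s)\sigma'(L)$. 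Either bookkeeping leads to the same bound $|\partial_t\partial_s r_\alpha|\le |\sin\theta_1\sin\theta_2|/r_\alpha \le 1/r_\alpha$, and as you note the factor of $2$ is slack.
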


To bound the pure derivatives \eqref{partial_s^2} and \eqref{partial_t^2}, we need to be able to describe the behavior of the curvature of circles of large radius. For this we have the following lemma.
\begin{lemma}[{~\cite[Lemma 4.1]{emmett2}}] \label{large radius} Using the notation above, we have absolute bounds\footnote{Note while this lemma is stated for circles with centers along $\tgamma$, it holds for circles in general.}
	\[
	0 < \kappa_{S(\tgamma,r_\alpha)}(s) - \bfk(\sigma'(r_\alpha(t,s))) < 1/r_\alpha(t,s).
	\]
\end{lemma}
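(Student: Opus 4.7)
The plan is to translate both quantities in the statement into scalar solutions of a Jacobi-type ODE along $\sigma$, and then relate them by a one-line Wronskian computation. Parametrize $\sigma$ as the unit-speed geodesic in $(\R^2,\tg)$ with $\sigma(0) = \tgamma(t)$ and $\sigma(r_\alpha) = \alpha(\tgamma(s))$, extended to all of $\R$ by Cartan--Hadamard. Every Jacobi field normal to $\sigma$ has the form $J(r) = u(r) E(r)$ for a unit parallel normal field $E$, with $u$ satisfying
\[
	u''(r) + K(\sigma(r))\, u(r) = 0,
\]
where $K \leq 0$ is the Gauss curvature of $\tg$.

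Next I introduce two distinguished scalar solutions. Let $u_1$ be determined by $u_1(0) = 0$ and $u_1'(0) = 1$: this is the magnitude of the Jacobi field generating the radial exponential map at $\tgamma(t)$, and by the standard Jacobi-field formula for the second fundamental form of a geodesic sphere on a surface,
\[
	\kappa_{S(\tgamma, r_\alpha)}(s) = \frac{u_1'(r_\alpha)}{u_1(r_\alpha)}.
\]
Let $u_2$ be the unique (up to scalar) solution bounded on $(-\infty, r_\alpha]$, normalized by $u_2(r_\alpha) = 1$. Unpacking Definition~\ref{def k} along the reparametrization $\zeta(r) = \sigma(r_\alpha + r)$ identifies $u_2'(r_\alpha)$ with $\bfk(\sigma'(r_\alpha))$.

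The crux is the Wronskian identity. Since $u_1$ and $u_2$ solve the same second order linear ODE, $W(r) = u_1(r) u_2'(r) - u_1'(r) u_2(r)$ is constant; evaluation at $r = 0$ gives $W = -u_2(0)$, and at $r = r_\alpha$ gives $W = u_1(r_\alpha)\, \bfk(\sigma'(r_\alpha)) - u_1'(r_\alpha)$, so
\[
	\kappa_{S(\tgamma, r_\alpha)}(s) - \bfk(\sigma'(r_\alpha)) = \frac{u_2(0)}{u_1(r_\alpha)}.
\]
Both inequalities in the lemma now reduce to soft estimates on $u_1$ and $u_2$ under $K \leq 0$. For positivity of the right-hand side I would show $u_2 > 0$ throughout $(-\infty, r_\alpha]$: a sign change at some $r_0$ would make $u_2$ concave on the side where it is negative (because $u_2'' = -Ku_2$ and $K \leq 0$), forcing $|u_2| \to \infty$ as $r \to -\infty$ and contradicting boundedness. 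For the upper bound, $u_1'' \geq 0$ on $[0, r_\alpha]$ combined with $u_1'(0) = 1$ gives $u_1(r_\alpha) \geq r_\alpha$, while $u_2 > 0$ together with $u_2'' \geq 0$ makes $u_2$ convex; a bounded convex function on a backward half-line must be nondecreasing, so $u_2(0) \leq u_2(r_\alpha) = 1$.

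The main obstacle is conceptual rather than computational: unpacking Definition~\ref{def k}, which characterizes $\bfk$ only via asymptotic boundedness of a Jacobi field, into the concrete boundary condition $u_2'(r_\alpha) = \bfk(\sigma'(r_\alpha))$. Once one recognizes $\bfk$ as the logarithmic derivative at $r_\alpha$ of the backward-bounded Jacobi field, the lemma is just a pairing of the two natural Jacobi solutions via the Wronskian. This is the argument of the first author in \cite{emmett2}, which I would follow here.
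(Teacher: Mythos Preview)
Your proof is correct and, as you note, follows the Wronskian argument from \cite{emmett2}; the paper itself cites this lemma from there without supplying a proof. One minor point: your estimates $u_2(0)\le 1$ and $u_1(r_\alpha)\ge r_\alpha$ yield only the non-strict bound $\le 1/r_\alpha$ on the right, and indeed equality occurs along a flat geodesic segment, but this is harmless for the applications in \S5, which only use $\kappa_{S(\tgamma,r_\alpha)}(s)-\bfk(\sigma'(r_\alpha))\le 1/r_\alpha$.
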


Our final geometric lemma provides bounds on the pure second derivatives of $\phi_\alpha$ and is adapted from ~\cite[Lemma 4.2]{emmett2}.

\begin{lemma} \label{diagonal lemma}
	Suppose
	\[
	|\pm \kappa_{\tilde\gamma}(s) - \sqrt{1-\epsilon^2}\kappa_{S(\tgamma,r_\alpha)}(s)| > \eps_0 \qquad \text{ for all } t,s \in \mathcal I
	\]
	for some $0<\eps_0\ll \delta $. Here, as before, the $\pm$ sign matches that of $\langle \sigma'(r_\alpha(t,s)), \frac{D}{ds} \alpha(\tgamma(s)) \rangle$. Then there exist positive constants $c_0$ and $\eta< \delta$ independent of $\alpha$ such that if the diameter of $\mathcal I$ is less than $c_0$, and $\partial_s \phi_\alpha$ is nonvanishing on $\mathcal I \times \mathcal I$, then
	\[
	|\partial_s \phi_\alpha(t,s)| \geq \eta \qquad \text{ for } t,s \in \supp b.
	\]
	On the other hand if $\partial_s \phi_\alpha(t_0,s_0) = 0$ for some $s_0,t_0 \in \mathcal I$, then
	\[
	|\partial_s^2 \phi_\alpha(t,s)| \geq \sqrt\delta\eps_0/2 \quad \text{ for } t,s \in \mathcal I.
	\]
	This result holds similarly for derivatives in $t$.
\end{lemma}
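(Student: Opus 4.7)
The plan is to use the first variation formula for geodesic distance to turn the curvature hypothesis into a lower bound on $|\partial_s^2\phi_\alpha|$ at any (near-)critical point of $\partial_s\phi_\alpha$, and then to use continuity to extend this bound and to rule out small nonvanishing values. The key observation is: if $\partial_s\phi_\alpha(t,s) = 0$, then $\partial_s r_\alpha(t,s) = \epsilon$, which by first variation equals $\langle \sigma'(r_\alpha), \tfrac{D}{ds}\alpha(\tgamma(s))\rangle$; hence the cosine of the angle between $\sigma$ and $\alpha(\tgamma)$ at the endpoint equals $\epsilon$, and since the circle $S(\tgamma(t), r_\alpha)$ is perpendicular to $\sigma$, we get $|\cos\theta| = \sqrt{1-\epsilon^2} \geq \sqrt{\delta}$ (using $|\epsilon| \leq 1-\delta$). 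Substituting this $\cos\theta$ into \eqref{partial_s^2} and matching the sign of $\pm$ on $\kappa_{\tgamma}$ so that the resulting expression agrees with the one in the hypothesis, I obtain
\[
|\partial_s^2 \phi_\alpha(t,s)| = \sqrt{1-\epsilon^2}\,\bigl|\pm\kappa_{\tgamma}(s) - \sqrt{1-\epsilon^2}\,\kappa_{S(\tgamma,r_\alpha)}(s)\bigr| > \sqrt{\delta}\,\epsilon_0
\]
at any critical point of $\partial_s\phi_\alpha$.

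For the second conclusion, if $\partial_s\phi_\alpha(t_0, s_0) = 0$ for some $(t_0, s_0) \in \mathcal{I}\times\mathcal{I}$, the inequality above holds at this single point. Using uniform bounds on the third derivatives of $\phi_\alpha$, which come from bounded geometry on the universal cover in the spirit of Lemmas \ref{off-diagonal lemma} and \ref{large radius}, I would choose $c_0$ small enough so that $|\partial_s^2\phi_\alpha(t,s)| \geq \sqrt{\delta}\,\epsilon_0/2$ holds on all of $\mathcal{I}\times\mathcal{I}$, uniformly in $\alpha$. For the first conclusion, I argue by contradiction: assume $\partial_s\phi_\alpha$ is nonvanishing on $\mathcal{I}\times\mathcal{I}$ yet $|\partial_s\phi_\alpha(t^*, s^*)| < \eta$ at some point. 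Smallness of $|\partial_s\phi_\alpha(t^*, s^*)|$ forces $\partial_s r_\alpha(t^*, s^*)$ close to $\epsilon$, so $\cos\theta$ is close to $\pm\sqrt{1-\epsilon^2}$, and the previous analysis still applies by continuity to give $|\partial_s^2\phi_\alpha(t^*, s^*)| \geq \sqrt{\delta}\,\epsilon_0/2$ (provided $\eta$ is small enough). The same third-derivative bounds then force this lower bound to persist on $\{t^*\}\times\mathcal{I}$, so $s \mapsto \partial_s\phi_\alpha(t^*, s)$ is strictly monotonic with derivative at least $\sqrt{\delta}\,\epsilon_0/2$ and varies by at least $c_0 \sqrt{\delta}\,\epsilon_0/2$ across $\mathcal{I}$. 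Choosing $\eta = c_0\sqrt{\delta}\,\epsilon_0/8$ (with $c_0 < \delta$ to ensure $\eta < \delta$) then forces $\partial_s\phi_\alpha(t^*, \cdot)$ to cross zero on $\mathcal{I}$, contradicting the nonvanishing hypothesis. The analogous statement in $t$ follows by symmetry.

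The main obstacle is the sign bookkeeping. The $\pm$ in \eqref{partial_s^2} is tied to $\sgn\langle \sigma'(r_\alpha), \tfrac{D}{ds}\alpha(\tgamma)\rangle$, while $\cos\theta = \pm\sqrt{1-\epsilon^2}$ at a critical point has a sign determined by orientation. One must verify that these combine so that the expression $\pm\kappa_{\tgamma} - \sqrt{1-\epsilon^2}\,\kappa_S$ of the hypothesis appears after substitution, rather than the variant with a plus sign (which could vanish even under the hypothesis). Uniformity in $\alpha$ is not a separate difficulty, since all bounds depend only on the bounded geometry of $(\R^2,\tg)$ and on the data of $\tgamma$ and $\alpha(\tgamma)$, not on $\alpha$ directly.
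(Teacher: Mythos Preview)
Your overall strategy---use the first variation to relate $\partial_s\phi_\alpha$ to $\sin\theta$, deduce that near a critical point $\cos\theta\approx\sqrt{1-\epsilon^2}$, and plug into \eqref{partial_s^2} to get a lower bound on $|\partial_s^2\phi_\alpha|$---matches the paper's. The difference is in how you propagate that lower bound across $\mathcal I\times\mathcal I$.

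You invoke uniform bounds on the \emph{third} derivatives of $\phi_\alpha$ to push the bound on $|\partial_s^2\phi_\alpha|$ from the single point $(t_0,s_0)$ to the whole square. The paper never needs third derivatives. Instead it proves a pointwise implication
\[
|\partial_s\phi_\alpha(t,s)|\le \eta' \ \Longrightarrow\ |\partial_s^2\phi_\alpha(t,s)|\ge \sqrt{\delta}\,\eps_0/2,
\]
valid at \emph{every} point, simply because $|\partial_s\phi_\alpha|=|-\epsilon+\sin\theta|$ small forces $|\cos\theta-\sqrt{1-\epsilon^2}|$ small. It then uses only the already-available \emph{second}-derivative bounds ($|\partial_s^2\phi_\alpha|\le C$ from \eqref{partial_s^2} and $|\partial_t\partial_s\phi_\alpha|\le 1$ from Lemma~\ref{off-diagonal lemma}) to show $\partial_s\phi_\alpha$ varies by at most $\eta'/2$ on a box of diameter $c_0$; hence if it vanishes once it stays $\le\eta'$ everywhere, and the pointwise implication fires on the entire box. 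You actually have this pointwise implication in your first-conclusion argument, so you could use it for the second conclusion too and dispense with third derivatives entirely. Uniform-in-$\alpha$ bounds on third derivatives of $d_{\tilde g}$ are not provided anywhere in the paper, so relying on them is an unjustified step as written.

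Your contradiction argument for the first conclusion has a gap. Monotonicity of $s\mapsto\partial_s\phi_\alpha(t^*,s)$ together with total variation $\ge 4\eta$ over $\mathcal I$ does \emph{not} force a sign change: the function could start at $0^+$ at the left endpoint of $\mathcal I$ and increase, never vanishing on the open interval, while still taking a value $<\eta$ at some $s^*\in\supp b$ near that endpoint. The paper handles this by bringing in the positive distance $d(\supp b,\mathcal I^c)$: once $|\partial_s^2\phi_\alpha|\ge\sqrt{\delta}\eps_0/2$ everywhere and $\partial_s\phi_\alpha$ is nonvanishing (hence monotone) on $\mathcal I$, one gets $|\partial_s\phi_\alpha(t,s)|\ge d(\supp b,\mathcal I^c)\cdot\sqrt{\delta}\eps_0/2$ for $s\in\supp b$, and $\eta$ is then taken to be the minimum of this and $\eta'/2$. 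Note also that $c_0$ is an \emph{upper} bound on $\diam\mathcal I$, not its value, so ``varies by at least $c_0\sqrt{\delta}\eps_0/2$'' is not what you get.
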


\begin{proof}
	The curvature of any geodesic circle in $(\R^2,\tilde g)$ with radius at least $1$ is bounded uniformly by Lemma \ref{large radius} and the fact that $\bfk$ is bounded ($\bfk$ is continuous on $M$). Hence, we select a global constant $C$ so that
	\[
	\sup_{s \in \mathcal I} \kappa_{\tilde\gamma}{(s)} + \sup_{t,s \in \mathcal I} \kappa_{S(\tgamma,r_\alpha)}(s) \leq C \qquad \text{ for all } \alpha \neq I
	\]
	where, as before, $\kappa_{S(\tgamma,r_\alpha)}(s)$ is the curvature of the geodesic circle at $\alpha(\tgamma(s))$, with center at $\tgamma(t)$ and radius $r=\phi_\alpha(t,s)$.
	Set
	\[
	\eta' = \min\left( \frac{1}{10}\delta, \frac{\sqrt{\delta}\,\eps_0}{4C} \right).
	\]
	We claim that
	\begin{equation}\label{claim}
	|\partial_s^2 \phi_\alpha| \geq \sqrt\delta\eps_0/2 \quad \text{ if } \quad |\partial_s \phi_\alpha| \leq \eta'.
	\end{equation}
	To prove this claim, first note that
	\[
	|-\epsilon+\sin(\theta)| = |\partial_s \phi_\alpha(t,s)|,
	\]
	where $\theta$ is as in Figure \ref{fig3},
	then if $|\partial_s \phi_\alpha(t,s)| \leq \eta'$, 
	\[
	|\cos (\theta) - \sqrt{1-\epsilon^2}| \leq \frac{|\sin^2 (\theta) - \epsilon^2|}{\cos (\theta) + \sqrt{1 - \epsilon^2}} \leq \frac{2|\sin (\theta) - \epsilon|}{\sqrt{\delta}} \leq \frac{\eps_0}{2C}
	\]
	Since $\eta' \leq \delta/10$, we have that $\cos(\theta) \geq \sqrt{\delta}$ by default.
	Hence,
	\begin{align*}
	|\partial_s^2 \phi_\alpha| &\geq \sqrt{\delta}\left| \pm \kappa_{\gamma} + \cos\theta \kappa \right|\\
	&= \sqrt{\delta}\left| \pm \kappa_{\gamma} + \sqrt{1-\epsilon^2}\kappa - (\sqrt{1-\epsilon^2}- \cos\theta) \kappa \right|\\
	&\geq \sqrt{\delta} | \pm\kappa_{\gamma} + \sqrt{1-\epsilon^2}\kappa | - \sqrt{\delta}|\sqrt{1-\epsilon^2} - \cos\theta| |\kappa|\\
	&\geq \sqrt{\delta} \eps_0 -\sqrt{\delta} \frac{C\eps_0}{2C}\\
	&= \frac{\sqrt{\delta}\eps_0}{2},
	\end{align*}
	proving \eqref{claim}.
	
	Now set
	\[
	c_0 = \frac{\eta'}{2\sqrt{2}(1+C^2)^{1/2}}.
	\]
	By \eqref{partial_s^2},
	\[
	|\partial_s^2 \phi_\alpha| \leq C.
	\]
	Moreover by Lemma \ref{off-diagonal lemma}, the fact that $\mathcal I$ has diameter at most $1$, and that the injectivity radius is at least $10$, we have
	\[
	|\partial_t \partial_s \phi_\alpha(t,s)| \leq 1.
	\]
	Hence for any $(t,s)$ and $(t_0,s_0)$ in $\I \times \I$,
	\begin{align*}
	|\partial_s \phi_\alpha(t,s) - \partial_s \phi_\alpha(t_0,s_0)| &\leq (1 + C^2)^{1/2}|(t,s) - (t_0,s_0)| \leq \frac{\eta'}{2}
	\end{align*}
	since the diameter of $\I \times \I$ is no greater than $\sqrt 2 c_0$.
	In particular if $\partial_s \phi_\alpha(t_0,s_0) = 0$, then
	\[
	|\partial_s \phi_\alpha(t,s)| \leq \eta'/2 \qquad \text{ for all } t,s \in \mathcal I
	\]
	and so $|\partial_s^2 \phi_\alpha(t,s)| \geq \sqrt\delta\eps_0/2$ by our claim \eqref{claim}.
	
	Now suppose $|\partial_s \phi_\alpha(t,s)| > 0$ for all $t,s \in \mathcal I$. In the case that $|\partial_s \phi_\alpha(t_0,s_0)| \leq \eta'/2$ for some $t_0,s_0 \in \mathcal I$, $|\partial_s \phi_\alpha(t,s)| \leq \eta'$ for all $t,s \in \mathcal I$, and hence by our claim, $\partial_s \phi_\alpha(t,s)$ is monotonic in $s$, and so $\partial_s \phi_\alpha$ is smallest near the endpoints of $\mathcal I$. Since $\supp b$ is closed and $\mathcal I$ open, the distance $d(\supp b, \mathcal I^c)$ from $\supp b$ to the complement of $\mathcal I$ is positive. Hence,
	\[
	|\partial_s \phi_\alpha(t,s)| \geq d(\supp b, \mathcal I^c) \sqrt\delta\eps_0/2 > 0.
	\]
	The proof is complete after setting
	\[
	\eta = \min(\eta'/2, d(\supp b, \mathcal I^c) \sqrt\delta\eps_0/2).
	\]
\end{proof}

Now we are in a position to finish the proof of Proposition \ref{large time} and hence  the proof of our main theorem.
\begin{proof}[Proof of Proposition \ref{large time}.]
	By our hypothesis \eqref{curv hyp +} and \eqref{curv hyp -} on the curvature of $\gamma$, and since $\bfk$ is continuous, we restrict the support of $b$ and the interval $\mathcal I$ so that
	
	\begin{align}\label{ca}
	\inf_{t,s \in \mathcal I} \left| \left\langle \frac{D}{dt} \gamma', \gamma'^\perp \right\rangle(t) + \sqrt{1 - \epsilon^2} \bfk_\gamma(\gamma'^\perp)(s) \right| &\geq 2\eps_0, \qquad \text{ and } \\\nonumber
	\inf_{t,s \in \mathcal I} \left| \left\langle \frac{D}{dt} \gamma', \gamma'^\perp \right\rangle(t) - \sqrt{1 - \epsilon^2} \bfk_\gamma(-\gamma'^\perp)(s) \right| &\geq 2\eps_0.
	\end{align}
	for some small $\eps_0 > 0$. We first require $R$ in \eqref{A def} be at least as large as $16\eps_0^{-1}\delta^{-1/2}$ so that, by Lemma \ref{off-diagonal lemma},
	\begin{equation} \label{phase function mixed}
	\sup_{t,s \in \mathcal I} |\partial_t \partial_s \phi_\alpha(t,s)| \leq \eps_1/8, \qquad \text{ if } \alpha \in \Gamma \setminus A,\ \eps_1=\eps_0\sqrt\delta.
	\end{equation}
	By Lemma \ref{large radius} and our requirement that $R > 16 \eps_0^{-1}\delta^{-1/2}$,
	\[
	\sqrt{1-\epsilon^2}|\kappa_{S(\tgamma,r_\alpha)}(s) - \bfk(\sigma'(r_\alpha(t,s)))|< \eps_0.
	\]
	Hence, if the $\pm$ sign matches that of $\langle \sigma'(r_\alpha(t,s)), \frac{D}{ds} \alpha(\tgamma(s)) \rangle$, it follows from \eqref{ca} that
	\[|\pm\kappa_{\tilde\gamma}(t) - \sqrt{1-\epsilon^2}\kappa_{S(\tgamma,r_\alpha)}(s)|> \eps_0, \qquad \text{ for } t,s \in \mathcal I.
	\]
	
	By proposition \ref{prop5.1} and Huygens' principle, the number of nonzero summand in  \eqref{2.21} is $O(e^{CT})$, it suffices to show
	\begin{equation}\label{final integral}
	\left| \iint a_\pm(T,\lambda,\alpha;t,s) e^{\pm i \lambda \phi_\alpha(t,s)} \, dt \, ds \right| \leq C e^{CT} \lambda^{-1},
	\end{equation}
	for all $\alpha\in\Gamma\setminus A$ and $C$ is independent of $\alpha$. 
	
	By a partition of unity, we restrict the diameter of $\mathcal I$ to be less than the constant $c_0$ in Lemma \ref{diagonal lemma}. If $|\partial_s \phi_\alpha| > 0$ on $\mathcal I \times \mathcal I$, $|\partial_s \phi_\alpha| \geq \eta$ on $\supp b \times \supp b$ for some $\eta > 0$ independent of $\alpha$. Then we integrate by parts in $s$ to see that much better bounds are satisfied in this case. We obtain \eqref{final integral} similarly if $\partial_t \phi_\alpha$ does not vanish in $\mathcal I \times \mathcal I$.
	
	What is left is the case that $\nabla \phi_\alpha$ vanishes at exactly one point $(t_0,s_0) \in \mathcal I \times \mathcal I$. By a translation, we assume without loss of generality that $(t_0,s_0) = (0,0)$. In this case,
	\begin{equation} \label{diagonal bounds}
	|\partial_s^2 \phi_\alpha| \geq \eps_1/2 \quad \text{ and } \quad |\partial_t^2 \phi_\alpha| \geq \eps_1/2
	\end{equation}
	on $\mathcal I \times \mathcal I$. Together with \eqref{phase function mixed}, it is easy to see that
	\[
		|\nabla^2\phi_\alpha(t,s) \xi| \geq \frac{c}{4} |\xi| \qquad \text{ for all } \xi \in \R^2.
	\]
	Hence by the mean value theorem,
	\[
		|\nabla \phi(t,s)| \geq \frac{c}{4}|(t,s)| \qquad \text{ for } (t,s) \in \mathcal I \times \mathcal I.
	\]
	\eqref{final integral} then follows by a standard stationary phase argument in both variables $s$ and $t$. Indeed, we gain a  $\la^{-1}$ factor from the stationary phase, and only lose by a factor of $e^{CT}$ thanks to our bounds on the amplitude \eqref{amp bound}.  
\end{proof}

\begin{proof}[Proof of Corollary \ref{corollary}.]
	The collection of circles in $M$ with radii in $[r_1,r_2]$ with $0 < r_1 < r_2 < \infty$ have uniformly bounded derivatives by compactness. Let $\gamma'^\perp$ denote the unit vector normal to such a circle $\gamma$ pointing towards its center. Note
	\[
	\left| \left\langle \frac{D}{dt} \gamma',\gamma'^\perp \right\rangle + \sqrt{1 - \eps^2} \bfk_\gamma(\gamma'^\perp) \right| > 0
	\]
	and by Lemma \ref{large radius},
	\[
	\left| \left\langle \frac{D}{dt} \gamma',\gamma'^\perp \right\rangle - \sqrt{1 - \eps^2} \bfk_\gamma(-\gamma'^\perp) \right| > 0.
	\]
	Again by compactness, these quantities are uniformly bounded away from $0$, say by $\delta > 0$. Hence, $E_\gamma = (-1 + \delta, 1 - \delta)$. The corollary follows from Theorem \ref{main}.
\end{proof}

\begin{proof}[Proof of Corollary \ref{co}.]

		 Let $\gamma$ be such a curve. As a consequence of the principle of uniform boundedness, every weakly convergent sequence in a Hilbert space is bounded. It suffices to show that any sequence of eigenfunctions with bounded $L^2(\gamma)$ restriction norms converges to $0$ weakly in $L^2(\gamma)$  
		 
		 For the sake of simplicity, we may now assume that $|\gamma|=2\pi$, and the $L^2(\gamma)$ norm of $e_{\la_j}$ is bounded by 1. Let $g\in L^2(\gamma),$ it then suffices to show that given any $\eps>0$,  for large enough $j$, we have
		\[\left|\int_\gamma e_{\la_j}(\gamma(s))\,g(s)\,ds\right|\le \eps,\]
		Now if we write $g$ in terms of its Fourier series,
		\[g(s)=\sum_ka_ke^{iks}.\]
		then $g$ being in $L^2(\gamma)$ implies that there exists a $N>0$ depending on $\eps$, such that
		\[\sum_{|k|>N}|a_k|^2\le\frac14\eps^2.\]
		If we denote
		\[b_{j,k}=\left|\int_\gamma e_{\la_j}\,e^{iks}\,ds\right|,\]
		then by Theorem \ref{main}, and the fact that $0$ is in the interior of $E_\gamma$,
		\[b_{j,k}\le C(\log\la_j)^{-\frac12},\]
		where $C$ will be an absolute constant provided that $|k|\le\delta\la_j,$ for some fixed $\delta>0$ such that $(-\delta,\delta)\subset E_\gamma$.
		
		Now we can see that	
		\begin{align*}\left|\int_\gamma e_{\la_j}\,g\,ds\right|&\le\sum_{|k|\le N}|a_k||b_{k,j}|+\Big[\sum_{|k|>N}|a_k|^2\Big]^\frac12\|e_{\la_j}\|_{L^2(\gamma)}\\
		&\le C{(\log\la_j)^{-\frac12}}\sum_{|k|\le N}|a_k|+\frac12\eps\\&\le C(\log\la_j)^{-\frac12}N^\frac12[\sum_{|k|\le N}|a_k|^2]^\frac12+\frac12\eps.\end{align*}
		
		Notice that if we take $j$ large enough, such that $\log{\la_j}\ge[{4\eps^{-2}NC^2\|g\|^2_{L^2(\gamma)}}]+1,$ the first term on the last line will be less than $\eps/2$, then we have
		\[\left|\int_\gamma e_{\la_j}\,g\,ds\right|\le \eps.\]
		This choice of $j$ can be justified, since $N/\la_j$ would be comparable to $\eps^2\la_j^{-1}\log\la_j<\delta$ when $\la_j$ is large enough, which guarantees  the uniformity of $C$.
	\end{proof}


\section{Appendix}

We present here a proposition which allows us to obtain the bounds on the mixed derivatives \eqref{kernel3} from the corresponding bounds on the pure derivatives.

\begin{proposition} \label{mixed derivatives prop}
	Let $M$ and $\tilde M$ be as above and let $f \in C^\infty(\tilde M, \tilde M)$ satisfy bounds
	\begin{equation} \label{mixed derivatives prop 1}
	|\Delta_x^\ell f( x,  y)| \leq C_\ell e^{C_\ell d_{\tilde g}(x, y)} \qquad \text{ and } \qquad |\Delta_y^\ell f( x,  y)| \leq C_\ell e^{C_\ell d_{\tilde g}( x,  y)}
	\end{equation}
	for $\ell = 0,1,2,\ldots$, where $\Delta_x$ and $\Delta_y$ denote the Laplace-Beltrami operators on $\tilde M$ in the $x$ and $y$ variables, respectively. Then,
	\[
	|\Delta_x^\ell \Delta_y^m f( x,  y)| \leq C_{\ell,m} e^{C_{\ell,m} d_{\tilde g}( x,  y)} \qquad \text{ for } \ell,m = 0,1,2,\ldots
	\]
	where each of the constants $C_{\ell,m}$ depends only on $M$ and finitely many of the constants $C_\ell$.
\end{proposition}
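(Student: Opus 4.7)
The plan is to prove the proposition by iterated application of interior elliptic regularity for the Laplace--Beltrami operator $\Delta_{\tilde g}$, applied separately in the $x$ and $y$ variables on the universal cover $(\R^2,\tilde g)$. Two structural facts are essential: first, the operators $\Delta_x$ and $\Delta_y$ commute as partial differential operators on $\tilde M \times \tilde M$; second, on the universal cover each is uniformly elliptic with smooth, uniformly bounded coefficients on balls of fixed radius, using the isometric action of the deck transformation group to transport estimates to a fundamental domain.

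In the first stage, I would convert the hypothesis on pure Laplacian iterates into pointwise bounds on pure partial derivatives. For each fixed $y_0$ and each multi-index $\alpha$, standard interior elliptic regularity applied to the function $x \mapsto f(x,y_0)$ on a ball of radius $1$ gives
\[
|D_x^\alpha f(x_0, y_0)| \leq C_{|\alpha|} \sum_{k \leq N(|\alpha|)} \|\Delta_x^k f(\cdot, y_0)\|_{L^\infty(B_1(x_0))} \leq C'_{|\alpha|}\, e^{C'_{|\alpha|}\, d_{\tilde g}(x_0, y_0)},
\]
using \eqref{mixed derivatives prop 1} and the estimate $d_{\tilde g}(x, y_0) \leq d_{\tilde g}(x_0, y_0) + 1$ for $x \in B_1(x_0)$. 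A symmetric argument in the $y$-variable controls pure $y$-derivatives of $f$.

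The central step is to establish analogous bounds on mixed partial derivatives $|D_x^\alpha D_y^\beta f(x_0, y_0)|$, because once these are in hand, expanding $\Delta_x^\ell \Delta_y^m$ in local coordinates as a finite linear combination of $D_x^\alpha D_y^\beta$ with $|\alpha| \leq 2\ell$, $|\beta| \leq 2m$ and smooth coefficients bounded on fundamental domains immediately yields the proposition. To control the mixed derivative, I would apply elliptic regularity in $y$ to the smooth function $y \mapsto D_x^\alpha f(x_0, y)$; this requires bounds on $\Delta_y^m D_x^\alpha f = D_x^\alpha \Delta_y^m f$, which in turn follow from elliptic regularity in $x$ applied to $\Delta_y^m f(\cdot, y)$, provided we have bounds on $\Delta_x^k \Delta_y^m f$ for a finite range of $k$. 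These last are exactly the conclusion of the proposition, and the loop is closed by a joint induction on $\ell + m$: the base cases $\ell = 0$ and $m = 0$ are supplied by the hypothesis, and since the hypothesis furnishes pure-iterate bounds of \emph{all} orders in each variable separately, the finitely many mixed-iterate bounds required by the elliptic regularity step at each level are available either from the hypothesis itself or from prior inductive levels.

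The main obstacle is precisely this apparent circularity in the joint use of elliptic regularity in both variables, since each application seems to demand one more order of Laplacian iterates than we are trying to bound. The resolution depends on careful bookkeeping of these orders and on exploiting the full strength of the hypothesis, which controls pure Laplacian iterates of arbitrarily large order and so absorbs the loss incurred at each elliptic regularity step; the constants $C_{\ell,m}$ may grow rapidly in $\ell$ and $m$, but this is permitted by the statement of the proposition.
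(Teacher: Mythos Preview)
Your induction does not close, and the ``careful bookkeeping'' you allude to cannot fix it. Trace the orders: to bound $\Delta_x^\ell \Delta_y^m f$ pointwise you pass through $D_x^\alpha D_y^\beta f$ with $|\alpha|\le 2\ell$, $|\beta|\le 2m$; your elliptic regularity chain then requires $\Delta_x^k \Delta_y^{m'} f$ with $k\le N(2\ell)$ and $m'\le N(2m)$, and in dimension two one has $N(2\ell)\ge \ell+1$, $N(2m)\ge m+1$. So controlling level $(\ell,m)$ demands level $(\ell+1,m+1)$, which has strictly larger $\ell+m$. The hypothesis supplies arbitrarily high \emph{pure} iterates, but every step of your chain produces genuinely mixed iterates of order at least as large as the target, so neither the hypothesis nor prior inductive levels furnish what you need.

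The paper breaks this circularity by a different mechanism: after localizing to the compact product $M\times M$, it passes to $L^2$ via Sobolev embedding, and then uses the spectral expansion in the product eigenbasis $e_p(x)e_q(y)$ to replace the mixed multiplier $\lambda_p^{4\ell}\lambda_q^{4m}$ by $C(\lambda_p^{4N}+\lambda_q^{4N})$ with $N=\ell+m+4$ (Young's inequality). This converts the mixed operator $\Delta_x^\ell\Delta_y^m$ into a sum of \emph{pure} iterates $\Delta_x^N$ and $\Delta_y^N$ in $L^2$, which the hypothesis controls directly. Equivalently, without spectral language, one can integrate by parts on the closed manifold to write $\|\Delta_x^\ell\Delta_y^m F\|_{L^2}^2 = \langle \Delta_x^{2\ell} F, \Delta_y^{2m} F\rangle \le \|\Delta_x^{2\ell}F\|_{L^2}\|\Delta_y^{2m}F\|_{L^2}$. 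This $L^2$ splitting is the missing idea; pointwise elliptic regularity alone will not produce it.
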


\begin{proof}
	Let $\beta \in C_0^\infty(\R,[0,1])$ be equal to $1$ near $0$ and be supported in the interval $(-\inj M, \inj M)$. Fix $x_0,y_0 \in \tilde M$ and set
	\[
	F(x,y) = \beta(d_{\tilde g}(x,x_0)) \beta(d_{\tilde g}(y,y_0)) f(x,y).
	\]
	The support of $\beta$ allows us to interpret $F$ has a function on $M \times M$. The distance function $d_{\tilde g}$ satisfies similar bounds as \eqref{mixed derivatives prop 1}, and hence
	\begin{equation} \label{mixed bounds prop 3}
	|\Delta_x^\ell F( x,  y)| \leq C_\ell' e^{C_\ell' d_{\tilde g}(x_0, y_0)} \qquad \text{ and } \qquad |\Delta_y^\ell F( x,  y)| \leq C_\ell' e^{C_\ell' d_{\tilde g}( x_0,  y_0)}.
	\end{equation}
	for $\ell = 0,1,2,\ldots$ where the constants $C_\ell'$ depend only on $C_\ell$, $\beta$, and $M$. Moreover, it suffices to show
	\[
	\| \Delta_x^\ell \Delta_y^m F \|_{L^\infty(M \times M)} \leq C_{\ell,m}' e^{C_{\ell,m}' d_{\tilde g}(x_0,y_0)} \qquad \text{ for } \ell,m = 0,1,2,\ldots
	\]
	where $C_{\ell,m}'$ only depends only on $M$ and finitely many of the constants $C_\ell'$.
	
	Note $\Delta_x + \Delta_y$ is the Laplace-Beltrami operator on the product manifold $M \times M$ endowed with the product metric. Moreover, $e_p(x)e_q(y)$ for $p,q = 0,1,2,\ldots$ form a Hilbert basis of eigenfunctions of $\Delta_x + \Delta_y$ with
	\[
	-(\Delta_x + \Delta_y)e_p(x) e_q(y) = (\lambda_p^2 + \lambda_q^2) e_p(x) e_q(y).
	\]
	Hence if we write
	\[
	\hat F(p,q) = \iint_{M\times M} F(x,y) \overline{e_p(x) e_q(y)} \, dx \, dy,
	\]
	we have by Sobolev embedding
	\begin{align*}
	&\|\Delta_x^\ell \Delta_y^m F \|_{L^\infty(M\times M)}^2\\
	&\lesssim \| (1 - \Delta_x - \Delta_y)^3 \Delta_x^\ell \Delta_y^m F \|_{L^2(M\times M)}^2 \\
	&= \sum_{p,q} (1 + \lambda_p^2  + \lambda_q^2)^{6} \lambda_p^{4\ell} \lambda_q^{4m} |\hat F(p,q)|^2 \\
	&\leq C_{\ell,m} \sum_{p,q} (1 + \lambda_p^{4(3 + \ell + m + 1)} + \lambda_q^{4(3 + \ell + m + 1)}) |\hat F(\ell,m)|^2\\
	&\leq C_{\ell,m} \left( \| F \|_{L^2(M\times M)}^2 + \| \Delta_x^{3+\ell+m+1} F \|_{L^2(M\times M)}^2 + \| \Delta_y^{3+\ell+m+1} F \|_{L^2(M\times M)}^2 \right).
	\end{align*}
	The desired bounds follow from H\"older's inequality and \eqref{mixed bounds prop 3}.
\end{proof}

		\bibliography{EF}{}
		\bibliographystyle{alpha}
	\end{document}